\documentclass[a4paper,11pt]{article}

\usepackage{anyfontsize}

\usepackage{booktabs}
\usepackage{colortbl}
\usepackage[pdftex,dvipsnames,table]{xcolor}
\usepackage{xfrac}
\usepackage{xargs}
\usepackage[utf8]{inputenc}
\usepackage[mathscr]{euscript}
\usepackage{xspace}
\usepackage{graphicx}
\usepackage{color}
\usepackage{enumerate}
\usepackage[english]{babel}
\usepackage{verbatim}
\usepackage{float}
\usepackage{soul}
\usepackage{rotating}
\usepackage{caption}
\usepackage[draft]{fixme}
\usepackage{longtable}
\usepackage{mathtools}
\usepackage[shortlabels]{enumitem}
\usepackage{tabularx}
\usepackage{bbm}
\usepackage[inkscapelatex=false]{svg}
\usepackage{subfig}
\usepackage{makecell}
\usepackage{refcount}

\definecolor{lightgray}{gray}{0.9}
\restylefloat{table}

\makeatletter
\newcommand{\multiline}[1]{%
  \begin{tabularx}{\dimexpr\linewidth-\ALG@thistlm}[t]{@{}X@{}}
    #1
  \end{tabularx}
}
\makeatother

\usepackage{algorithm}%
\usepackage[noend]{algpseudocode}%

\usepackage{natbib}
 \bibpunct[, ]{(}{)}{,}{a}{}{,}%

\usepackage{hyperref}
\usepackage{cleveref}

\makeatletter
\newcounter{HALG@line}
\renewcommand{\theHALG@line}{\thealgorithm.\arabic{ALG@line}}
\makeatother

\hypersetup{
    colorlinks,
    linkcolor={red!50!black},
    citecolor={blue!50!black},
    urlcolor={blue!80!black}
}

\newenvironment{tightquote}
  {\begingroup\leftskip=2em \rightskip=2em \parindent=0pt \parskip=0pt}
  {\par\endgroup}
  
\newcommand{\R}{\mathbb{R}}
\newcommand{\Z}{\mathbb{Z}}

\newcommand{\Q}{\mathbb{Q}}

\newcommand{\mb}[1]{\mathbb{#1}}

\newcommand{\tsc}[1]{\textsc{#1}}
\newcommand{\oa}[1]{\vec{#1}}
\newcommand{\mc}[1]{\mathcal{#1}}

\makeatletter
\DeclareRobustCommand{\cev}[1]{%
  {\mathpalette\do@cev{#1}}%
}
\newcommand{\do@cev}[2]{%
  \vbox{\offinterlineskip
    \sbox\z@{$\m@th#1 x$}%
    \ialign{##\cr
      \hidewidth\reflectbox{$\m@th#1\vec{}\mkern4mu$}\hidewidth\cr
      \noalign{\kern-\ht\z@}
      $\m@th#1#2$\cr
    }%
  }%
}
\makeatother

\DeclareMathOperator{\proj}{\tsc{proj}}

\DeclarePairedDelimiter{\ceil}{\lceil}{\rceil}
\DeclareMathOperator{\epi}{\tsc{epi}}
\newcommand{\allones}{\mathbf{1}}
\newcommand{\T}{\mathsf{\scriptscriptstyle T}} %

\DeclareMathOperator{\vrpr}{\hyperref[problem:vrpr]{\textsc{vrpr}}}
\newcommand{\epiQ}[1]{\hyperref[problem:vrpr]{\epi(#1)}}
\newcommand{\X}{\hyperref[assumption:formulation]{\mc{X}}}
\newcommand{\Xsub}{\hyperref[set:subtour]{\mc{X}_{\tsc{sub}}}}
\newcommand{\Xcvrp}{\hyperref[set:cvrp]{\mc{X}_{\tsc{cvrp}}}}
\newcommand{\bark}[1]{\hyperref[set:cvrp]{\bar{k}(#1)}}
\newcommand{\Qc}{\hyperref[eq:formula_dror]{\mc{Q}_C}}
\newcommand{\Qcscen}{\hyperref[eq:formula_dror]{\mc{Q}^\xi_C}}
\newcommand{\routes}{\hyperref[def:realizable_routes]{\Pi}}

\newcommand{\Om}{\hyperref[def:recourse_disaggregation]{\Omega}}
\newcommand{\F}[1]{\hyperref[set:F]{\mathcal{F}(#1)}}
\newcommand{\projrho}{\proj_{(x, \rho)}}
\newcommand{\W}{\hyperref[def:activation_function]{W}}
\renewcommand{\L}{\hyperref[def:recourse_lower_bound]{\mc{L}}}

\newcommand{\Wg}{\hyperref[ineq:gendreau_cut]{W^k_G}}

\renewcommand{\H}{\hyperref[def:partial_route]{H}}
\newcommand{\Xh}[1]{\hyperref[set:x_h]{\mc{X}_{=}(#1)}}
\newcommand{\supsetXh}[1]{\hyperref[set:path_cut]{\mc{X}_{\supseteq}(#1)}}
\newcommand{\Whs}{\hyperref[thm:partial_route_activation_exact_adheres]{W_{HS}}}
\newcommand{\Wdl}[1]{\hyperref[set:path_cut]{W_{DL}(#1)}}
\newcommand{\setWdl}[1]{\hyperref[eq:activation_set]{W_{DL}(#1)}}
\newcommand{\Xset}[1]{\hyperref[set:x_set]{\mc{X}(#1)}}

\newcommand{\supsetWof}[2]{\hyperref[proposition:activation_function_adheres]{W_{OF}(#1 ; \mc{X}_{\supseteq}(#2))}}

\newcommand{\failfunction}[2]{\hyperref[def:fail_function]{\tsc{fail}\left(#1, #2\right)}}
\newcommand{\vrpsdLB}[3]{\hyperref[def:vrpsd_lower_bound]{\mc{L}^{#1}_\xi\left(#2, #3\right)}}
\newcommand{\Lc}[1]{\hyperref[prop:partial_route_lower_bound]{\mc{L}_C(#1)}}
\newcommand{\setLc}[1]{\hyperref[prop:set_recourse_lower_bound]{\mc{L}_C(#1)}}

\newcommand{\algBest}{$\tsc{pr}$+$D1$}
\newcommand{\algBestD}{$\tsc{pr}$+$D2$}
\newcommand{\algBestDset}{$\tsc{pr}$+$D2$+\tsc{set}}

\usepackage[margin=1.2in]{geometry}
\usepackage{amsmath,amsthm, amssymb, amsfonts, amsbsy}
\usepackage{thmtools} %
\usepackage{thm-restate} %
\usepackage{authblk}
\providecommand{\keywords}[1]{\textit{Keywords:} #1}
\usepackage{natbib}
\setcitestyle{authoryear,round}

\DeclareMathOperator*{\argmax}{arg\,max}

\makeatletter
\newtheoremstyle{mystyle}%
{\topsep}{\topsep}
{\itshape}{}
{\bfseries}{}
{0.5em}
{\thmname{\@ifempty{#3}{#1}\@ifnotempty{#3}{#3}}}
\makeatother

\theoremstyle{mystyle}

\newcommand{\Halmos}{\qed}

\theoremstyle{plain}
\newtheorem{theorem}{Theorem}
\newtheorem{proposition}{Proposition}
\newtheorem{corollary}{Corollary}
\newtheorem{lemma}{Lemma}
\newtheorem{claim}{Claim}
\newtheorem{fact}{Fact}

\theoremstyle{definition}

\newtheorem{definition}{Definition}
\newtheorem{example}{Example}
\newtheorem{remark}{Remark}
\newtheorem{assumption}{Assumption}
\newtheorem{contribution}{Contribution}

\newtheoremstyle{named}{}{}{}{}{\bfseries}{.}{.5em}{#1 #3}
\theoremstyle{named}

\newenvironment{APPENDICES}{\appendix}{}

\usepackage{silence}
\begin{document}

\title{On vehicle routing problems with stochastic demands --- Generic disaggregated integer L-shaped formulations}

\author[1]{Matheus Jun Ota\thanks{(mjota@uwaterloo.ca)}}
\author[1]{Ricardo Fukasawa\thanks{(rfukasawa@uwaterloo.ca)}}
\affil[1]{University of Waterloo, Waterloo, Ontario, Canada}

\maketitle

\begin{abstract}
We study the vehicle routing problem with stochastic demands (VRPSD), an important variant of the classical capacitated vehicle routing problem in which customer demands are modeled as random variables. 
We develop the first algorithm for the VRPSD in the case where the demands are given by an empirical probability distribution of scenarios --- a data-driven variant that tackles a significant challenge identified in the literature: dealing with correlations. Indeed, most previous exact algorithms for this problem relied on independence of the random variables.
To address the VRPSD with scenarios, we introduce a unifying framework that generalizes existing integer L-shaped (ILS) formulations developed for other variants of the problem. 
This framework and subsequent analysis allow us to generalize previous ILS cuts and pinpoint which assumptions are needed to apply those generalizations. In particular, our results enable, for the first time, the combination of two previous types of inequalities: \emph{partial route} and \emph{set} cuts, which leads to significant computational improvements.

\end{abstract}

\keywords{integer programming, stochastic programming, vehicle routing problem.}

\section{Introduction}
\label{section:intro}

The \textit{Capacitated Vehicle Routing Problem} (CVRP) is a fundamental combinatorial optimization problem in which one seeks minimum-cost routes to serve all customer demands while respecting vehicle capacity constraints. As a cornerstone problem in Operations Research (OR), the CVRP has driven numerous theoretical and practical advances in combinatorial optimization and mathematical programming~\citep{TothV02}. In this paper, we study the \textit{Two-Stage Vehicle Routing Problem with Stochastic Demands} (VRPSD), a variant of the CVRP where a fixed number of routes are decided \emph{a priori}, customer demands are random variables revealed upon vehicle arrival, and a \emph{recourse cost} is incurred whenever a planned route exceeds vehicle capacity. This problem has been investigated for over five decades~\citep{gendreau201650th, tillman1969multiple}, with growing interest in recent years~\citep{louveaux2018exact, salavati2019trsc,florio2022recent,hoogendoorn2023improved,ota2024hardness,parada2024disaggregated,legault2025superadditivity}.

Despite this growing interest, most VRPSD studies have strong assumptions on the random variables, like independent probability distributions with a convolution property (see  Section 3.1 of~\cite{gendreau201650th}). The assumption of independence simplifies model tractability, but it is often unrealistic, as customer demands are frequently correlated in practice. Indeed, handling correlated demands has been explicitly identified as a challenge in the field~\citep{gendreau201650th}. However, to our knowledge, the only progress made in this direction consists of the work of~\cite{florio2022recent}, where the authors consider correlations that can be modeled with a single external factor, and Chapter~5 of Hoogendoorn's thesis~\citep{hoogendoorn2024vehicle}, which addresses correlations of multivariate normal distributions.

In contrast to this landscape, scenario-based approaches are standard and have been extensively studied in the general stochastic optimization literature. They approximate a wide range of probability distributions using empirical samples~\citep{chen2022sample, bertsimas2018robust, narum2024problem}, allowing for correlations while sometimes offering theoretical guarantees via sample average approximation~\citep{kleywegt2002sample, birge2011introduction, luedtke2008sample, swamy2012sampling}.

To bridge this gap, this paper proposes solution methodologies for the VRPSD when uncertainty is modeled using \emph{demand scenarios}. Our computational study considers the \emph{classical recourse policy}, one of the most investigated recourse policies in the literature~\citep{dror89, gendreau95, laporte2002, jabali2014, parada2024disaggregated}, in which the vehicles follow the routes determined in the first stage and perform replenishment trips in the second stage whenever the capacity gets exceeded. Although we focus on the classical recourse policy, our theoretical results also allow us to determine precisely under which conditions our proposed methodologies could be applied to other recourse policies for the VRPSD with scenarios.

We concentrate on \emph{integer L-shaped} (ILS) formulations because they have an extensive literature in the field and are the basis of the most successful branch-and-cut algorithms for the VRPSD to date~\citep{hoogendoorn2023improved,
parada2024disaggregated, legault2025superadditivity}. Additionally, for the VRPSD with scenarios, branch-and-price algorithms face intrinsic hardness issues~\citep{ota2024hardness}, further motivating our focus on branch-and-cut approaches. 

To better position our contributions, we provide a brief overview of the ideas behind state-of-the-art ILS algorithms for the VRPSD (a more detailed and technical description will be given in Section~\ref{section:review}).

\subsection{ILS approaches and our contributions}

Let~$G = (V, E)$ be a complete undirected graph. The basic idea in ILS approaches for the VRPSD is to formulate the problem as
\begin{equation}
    \label{problem:vrpr1}
    \min \{c^\T x + \rho : \rho \geq \mc{Q}(x),~ x \in \mc{X} \cap \Z^E \}.
\end{equation}
where $\mc{X}\cap \Z^E$ is the set of feasible first-stage decisions (routes that are decided a priori), and $\mc{Q}(x)$ is the \emph{recourse function} representing the expected recourse cost incurred by taking those decisions. 

Traditionally, ILS approaches~\citep{gendreau95, laporte2002, jabali2014, Salavati2019175, Salavati2019, salavati2019trsc, louveaux2018exact} replace the constraints on the recourse cost variable $\rho$ in Problem~\eqref{problem:vrpr1} with optimality cuts (or \emph{lower bounding functionals}) defined by an \emph{activation function}, which determines when the ILS cut is ``active'', and a corresponding \emph{recourse lower bound} that applies whenever the cut is ``active''. These inequalities are then used to lower bound the recourse function~$\mc{Q}(x)$ with linear expressions on~$x$.

To improve the approximation of the recourse function one may \emph{disaggregate} the recourse cost into smaller components. For example, instead of using a single recourse cost variable~$\rho$, one may decompose it across the customers as~$\rho = \sum_v \theta_v$, and then generate ILS cuts based on only a subset of the~$\theta_v$-variables. This concept was first explored in Chapter~6 of Séguin's thesis~\citep{seguin}, where the author proposes a \emph{route-based recourse decomposition} that assigns the recourse cost of a whole route to a single representative customer in that route. A similar route-based recourse decomposition was adopted by~\cite{cote2020vehicle} in the context of a vehicle routing problem with stochastic two-dimensional items. For the VRPSD with uncorrelated demands,~\cite{hoogendoorn2023improved} recently combined this idea with the concept of \emph{partial routes}~\citep{hjorring1999new} to derive the so-called \emph{partial route-split inequalities}. 

On the other hand, \emph{customer-based recourse decompositions} underlie the recently proposed \emph{disaggregated integer L-shaped} (DL-shaped) method~\citep{parada2024disaggregated}, where the recourse cost can be distributed more broadly across multiple customers in a route, allowing several customers in the route to attain positive~$\theta_v$-values. This alternative disaggregation enabled the authors to combine two different classes of ILS cuts, namely the \emph{path cuts} and \emph{set cuts}. Under the assumption that customer demands are independent (and some additional assumptions, such as assuming that probability distributions have a certain convolution property, see~\cite{gendreau201650th, christiansen2007}), this approach achieved state-of-the-art results for the VRPSD under both the \emph{classical recourse policy}~\citep{parada2024disaggregated} and the \emph{optimal recourse policy}~\citep{legault2025superadditivity}. The method was also extended by~\cite{PARADA2025} to a stochastic bike-sharing rebalancing problem.

Originally, the DL-shaped method was shown to be valid for \emph{monotone} recourse functions. However,~\cite{legault2025superadditivity} showed that this claim is incorrect, and in fact, when the number of available vehicles is unlimited, they demonstrated that the DL-shaped method is valid if and only if the recourse function~$\mc{Q}$ is \emph{superadditive}. (A more detailed technical discussion on this issue, including a formal definition of such terms, is left to Section~\ref{section:review}.)

For the VRPSD with scenarios, it is not hard to see (and we indeed show this in Example~\ref{example:qc_not_superadditive}) that 
the  recourse function associated with the classical recourse policy is not superadditive, which may lead one to believe that path and set cuts cannot be used. Yet, the result of~\cite{legault2025superadditivity} crucially relies on the assumption of an unlimited number of vehicles to prove the necessity of superadditivity. In contrast, in this paper (and in most other works in the literature, see Table~1 of~\cite{hoogendoorn2025evaluation}), the number of available vehicles is fixed and limited. Therefore, our first (minor) contribution is as follows.

\begin{contribution}
\label{cont:rsuper}
We identify a weaker notion of superadditivity, called, \emph{restricted superadditivity} (Definition~\ref{def:weak_superadditivity}), and show that path cuts are valid if and only if restricted superadditivity holds (Theorem~\ref{thm:supperadditive}). In particular, this result holds both for unlimited or fixed number of vehicles.
\end{contribution}

Still, the classical recourse function is not restrictively superadditive (Example~\ref{example:qc_not_superadditive}), so path cuts cannot be used in our context. Given the results of~\cite{legault2025superadditivity} on the DL-shaped method, one might then conclude that the same also holds true for set cuts. This leads to our second contribution.

\begin{contribution}
    \label{cont:set}
    We derive set cuts that are valid for the VRPSD with scenarios under the classical recourse policy (Proposition~\ref{prop:set_recourse_lower_bound}).
\end{contribution}

\noindent
We also derive partial route cuts for our problem, although we view this as a minor contribution:

\begin{contribution}
    \label{cont:partial}
    We derive partial route inequalities that are valid for the VRPSD with scenarios under the classical recourse policy (Proposition~\ref{prop:partial_route_lower_bound}).
\end{contribution}

Given Contributions~\ref{cont:set} and \ref{cont:partial}, one may be led to believe that an approach to solve the problem is to use both of these cuts at the same time. This is, however, more challenging than it seems at first, as it depends on which type of recourse decomposition that is being used. This leads us to our fourth (and perhaps most important) contribution.

\begin{contribution}
    \label{cont:recoursedisag}
    We formally tie the validity of each class of cuts to a particular way of decomposing the recourse, termed \emph{recourse disaggregation} (Definition~\ref{def:recourse_disaggregation}). In addition, we argue that combining cuts that use different recourse disaggregations may lead to invalid formulations (Section~\ref{subsection:limitations}). 
    This explicit link between validity and recourse disaggregation is developed in the framework proposed in Section~\ref{section:framework}, which allows us to safely combine set cuts with partial route cuts for our problem.
\end{contribution}

We highlight that combining set cuts with partial route cuts has not been done before in the literature, for any variant of the VRPSD. While it is hard to speculate the reason for this, since it has not been explicitly justified in previous works, we believe that this contribution allows us to shed light on an explicit reason why this may or may not be possible.
Moreover, we note that Contribution~\ref{cont:recoursedisag} can be extended to other variants of the VRPSD, as the framework setup in Section~\ref{section:framework} allows us to rigorously determine the technical conditions under which our results hold.

We end this section by outlining the structure of the paper. It is important to note that, while it was convenient to describe the contributions in this particular order to provide context for the reader, the actual technical content does not follow that order. 

Section~\ref{section:setup} introduces the general class of problems our framework addresses and presents the VRPSD with scenarios under the classical recourse policy as the main motivating application. Section~\ref{section:review} reviews the disaggregated ILS formulations of~\cite{hoogendoorn2023improved} and~\cite{parada2024disaggregated}, explaining in detail the limitations of previous approaches and justifying the development of a unifying framework. Section~\ref{section:framework} presents the framework
(Contribution~\ref{cont:recoursedisag}), 
while Section~\ref{section:generalizing} applies it to generalize existing ILS cuts and to characterize when certain ILS cuts yield valid reformulations (Contributions~\ref{cont:rsuper} and \ref{cont:recoursedisag}). Section~\ref{section:application_vrpsd} derives recourse lower bounds for the VRPSD with scenarios under the classical recourse policy (Contributions~\ref{cont:set} and \ref{cont:partial}), and Section~\ref{section:experiments} reports the computational results. Finally, Section~\ref{section:conclusion} presents the concluding remarks. To facilitate reading, Appendix~\ref{appendix:notation} lists the main notation used throughout the paper.

\subsection{Notation}
We use~$\mb{R}_+$ and~$\mb{R}_{++}$ to denote the sets of nonnegative and positive real numbers, respectively. Similar notation applies to~$\Q$ and~$\Z$. For any real number~$a$, we use~$(a)^+ \coloneqq \max\{0, a\}$. Moreover, whenever~$a$ is an integer, we define~$[a] \coloneqq \{1, \ldots, a\}$ if~$a$ is positive, and~$[a] \coloneqq \emptyset$ otherwise. The notation~$\mb{I}( \cdot )$ denotes the indicator function. If~$f$ is a vector and~$i$ is one of its coordinates, we write~$f_i$ and~$f(i)$ interchangeably. For any function (respectively, vector)~$f$ and a subset~$H$ of its domain (respectively, coordinates), we use~$f(H)$ as shorthand for~$\sum_{i \in H} f(i)$. The notations~$\mathbf{1}$ and~$\mathbf{0}$ refer to the all-ones and all-zeroes vectors, respectively.

For any undirected graph~$G$, we use~$V(G)$ and~$E(G)$ to refer to the set of vertices and edges of~$G$, respectively. If~$G$ is a directed graph (digraph), we use~$A(G)$ to refer to the set of arcs of~$G$. To ease the presentation, we sometimes abbreviate an edge~$\{u, v\}$ or an arc~$(u, v)$ simply to~$uv$. Given an undirected graph~$G$ and a set~$S \subseteq V(G)$, the notation~$\delta_G(S)$ (respectively,~$E_G(S)$) denotes the set of edges in~$G$ with exactly one endpoint (respectively, two endpoints) in~$S$, and we may omit the subscript when the graph~$G$ is clear from the context. If~$S \subseteq V(G)$ is a singleton~$\{v\}$, we often refer to~$S$ as simply~$v$, meaning that~$\delta_{G}(v)$ is well-defined. For every pair of disjoint subsets~$S, T \subseteq V(G)$, we define~$E_G(S, T) \coloneqq \{e \in E : |e \cap S| = |e \cap T| = 1\}$. We use~$G' \subseteq G$ to indicate that~$G'$ is a subgraph of~$G$.

\section{The setup}
\label{section:setup}

In this section, we describe the class of problems our framework addresses, the assumptions we make, and we present the VRPSD with scenarios under the
classical recourse policy as the main motivating application.

\subsection{Generic problem definition}
\label{subsection:problem}

From now on, we fix~$G$ to be a complete undirected graph with vertex set~$V \coloneqq V(G) = \{0\} \dot \cup V_+$, edge set~$E \coloneqq E(G)$ and edge weights~$c \in \Q^E_+$. The vertex~$0$ represents the \emph{depot} and~$V_+$ \label{sym:V+} denotes the set of \emph{customers}. We also fix~$\mc{I}$ to be a tuple representing the input of the generic problem that we consider, and we assume that~$\mc{I}$ contains the graph~$G$ and its edge weights. For example, an instance of the VRPSD with demands following independent normal distributions can be represented as~$\mc{I} = (G, c, k, C, \bar{d}, \sigma)$, where~$k \in \Z_{++}$ is the fixed number of vehicles,~$C \in \Q_+$ is the vehicle capacity, $\bar{d} \in \Q^{V_+}_+$ is the vector of expected demands, and~$\sigma \in \Q^{V+}_+$ is the vector of standard deviations.

Our goal is to find feasible routing plans that cover all customers in~$G$ with a collection of routes. Formally, we define a \emph{route}~$R \subseteq G$ as a simple undirected cycle that starts and ends at the depot, i.e.,~$V(R) = \{0, v_1, v_2, \ldots, v_\ell\}$ and~$E(R) = \{\{0, v_1\}, \{v_1, v_2\}, \ldots, \{v_\ell, 0\}\}$, where all customers~$v_i$ are distinct. The notation~$V_+(R)$ refers to the set of customers inside~$R$, that is,~$V_+(R) \coloneqq \{v_1, \ldots, v_\ell\}$. If~$R$ is a route containing a single customer~$v$, then~$E(R)$ denotes a multiset that contains edge~$\{0, v\}$ with multiplicity 2. %
For convenience, we use~$c(R)$ as a shorthand to~$c(E(R))$. Additionally, we often represent~$R$ with the tuple~$(v_1, \ldots, v_\ell)$ and, in this context, we assume that~$v_0 = v_{\ell + 1} = 0$. A \emph{routing plan} is a set of routes~$\mc{R} = \{R_1, \ldots, R_t\}$ such that the sets~$\{V_+(R_i)\}_{i \in [t]}$ form a partition of~$V_+$ (note that~$t$ is not fixed).

Routes are undirected graphs, so if~$R = (v_1, \ldots, v_\ell)$ and~$R' = (v_\ell, \ldots, v_1)$ are both routes, then~$R = R'$. However, we sometimes have to refer explicitly to the different orientations of a route. To this end, we associate with route~$R = (v_1, \ldots, v_\ell)$ two digraphs~$\oa{R}$ and~$\cev{R}$, which we call \emph{directed routes}, or \emph{orientations} of~$R$. Both~$\oa{R}$ and~$\cev{R}$ have the same vertex set as~$R$, but the arcs are in opposite directions, that is,~$A(\oa{R}) = \{(0, v_1) \ldots, (v_\ell, 0)\}$ and~$A(\cev{R}) = \{(0, v_\ell) \ldots, (v_1, 0)\}$ \label{sym:dir_route}. Similarly to the routes, we write~$\oa{R} = (v_1, \ldots, v_\ell)$ and~$\cev{R} = (v_\ell, \ldots, v_1)$, and since these are directed graphs,~$\oa{R}$ differs from~$\cev{R}$ whenever~$\ell \geq 2$. We need to clarify a detail here: strictly speaking, our notation is ambiguous, since if~$R = (v_1, \ldots, v_\ell)$ and~$R' = (v_\ell, \ldots, v_1)$ are both routes, then~$R = R'$ and the notation~$\oa{R}$ might refer to either~$(v_1, \ldots, v_\ell)$ or~$(v_\ell, \ldots, v_1)$. In such situations, we always assume that the arrows in the notation are according to how we first write the tuple for the underlying (undirected) route, so even though~$R = R'$, we have that~$\oa{R} \neq \oa{R'}$ and~$\cev{R} = \oa{R'}$. When the sequence of customers defining~$R$ is not specified,~$\oa{R}$ is an arbitrary (but fixed) orientation of~$R$.

In the rest of this paper, we fix~$\mc{Q}$ to denote a generic \emph{recourse function}~\citep{ota2024hardness}\label{sym:Q}, that is,~$\mc{Q}$ is a function that takes the input~$\mc{I}$ as a parameter and maps each route~$R$ to a value~$\mc{Q}(R ; \mc{I}) \in \Q_+$. Since the instance~$\mc{I}$ is fixed, we write~$\mc{Q}(R)$ instead of~$\mc{Q}(R ; \mc{I})$. While the correctness of the approaches proposed here depends only on~$\mc{Q}$ returning nonnegative rational numbers, our algorithms evaluate~$\mc{Q}$ at multiple routes. Therefore, in practice, we also assume access to an algorithm that, given a route~$R$, computes~$\mc{Q}(R; \mc{I})$ efficiently (say in polynomial or pseudo-polynomial time in the size of~$\mc{I}$).

We next turn our attention to formalizing our assumptions on the set of feasible routing plans by means of an edge-based formulation for the problem. By using the classical \emph{subtour elimination constraints} (SECs)~\citep{TothV02}, we have a bijection between the set of all routing plans and the integer vectors inside
\begin{equation}
\tag{$\mc{X}_{\tsc{sub}}$}
\label{set:subtour}
\mc{X}_{\tsc{sub}} =
\left\{x \in [0, 2]^{E}:~
\begin{aligned}
& x(\delta(v)) = 2, & \forall v \in V_+ \\
& x(E(S)) \leq |S| - 1, & \forall \emptyset \subsetneq S \subseteq V_+
\end{aligned}
\right\}.
\end{equation}
With each vector~$\bar{x} \in \Xsub \cap \Z^E$ we denote its corresponding routing plan with the notation~$\mc{R}(\bar{x})$. Specifically, for any route~$R$, let~$\mathbbm{1}_R$ denote the characteristic vector of~$E(R)$ (if~$R = (v)$, then~$(\mathbbm{1}_R)_{0v} = 2$). The vector~$\bar{x}$ can then be written as~$\bar{x} = \sum_{R \in \mc{R}(\bar{x})} \mathbbm{1}_R$. Additionally, for any vector~$x' \in \R^E_+$, we use~$G(x')$ to refer to its \emph{support graph}, that is,~$V(G(x')) = V$ and~$E(G(x')) = \{e \in E : x'_e > 0\}$\label{sym:support_graph}.

In many vehicle routing problems, additional  problem-specific intra/inter-route constraints are imposed to define what is a feasible routing plan; for example, bounds on the number of routes, capacity restrictions, and time windows~\citep{TothV02}. These constraints are frequently handled well by existing formulations with additional inequalities and variables. To isolate the role of the recourse function~$\mc{Q}$, we make the following assumption on the set of feasible routing plans (note that the set~$\mc{X}$ below could potentially be given as the projection of a higher-dimensional polyhedron).

\begin{assumption}
    \label{assumption:formulation}
    We are given a linear programming (LP) formulation of a polytope~$\mc{X} \subseteq \Xsub$ such that the set of feasible routing plans is given by~$\{\mc{R}(x) : x \in \mc{X} \cap \Z^E\}$.
\end{assumption}

We can now formally define the class of problems that we address (for which~$\mc{I}$ serves as input):

\begin{definition}
    \label{def:problem_vrp}
    The \emph{Vehicle Routing Problem with Recourse} (VRPR) with respect to~$\mc{Q}$ and~$\mc{X}$ seeks a routing plan~$x\in \mc{X}\cap \Z^E$ that minimizes~$\sum_{R \in \mc{R}(x)} [c(R) + \mc{Q}(R)]$.
\end{definition}

\noindent
Similarly to how we treat recourse functions, we define all the mathematical objects in this paper (such as functions and sets) relative to~$\mc{I}$, without making this dependence explicit in the notation.

Defining~$\mc{Q}(\bar{x}) \coloneqq \sum_{R \in \mc{R}(\bar{x})} \mc{Q}(R)$, for every~$\bar{x} \in \X \cap \Z^E$, we then express problem VRPR as
\begin{equation}
    \label{problem:vrpr}
    \tag{$\tsc{vrpr}(\mc{Q}, \mc{X})$}
    \min \{c^\T x + \rho : \rho \geq \mc{Q}(x),~ x \in \X \cap \Z^E \}.
\end{equation}

\noindent
The feasible region of~\ref{problem:vrpr} is denoted as~$\epi(\mc{Q}, \mc{X})$, since it can be interpreted as the \emph{epigraph} (see Section 2.11 (c) of~\cite{birge2011introduction}) of the function that maps each~$x \in \R^E$ to~$\mc{Q}(x)$ if~$x \in \X \cap \Z^E$, and to~$+\infty$ otherwise.

\subsection{Motivating application: the VRPSD with scenarios under the classical recourse policy}
\label{subsection:vrpsd}

Besides the original graph~$G = (V, E)$ and the edge costs vector~$c \in \Q^E_+$, the VRPSD also receives as input the vehicle capacity~$C \in \Q_{++}$ and the number of available vehicles~$k \in \Z_{++}$. The input data related to the stochastic customer demands is described next. 

Let~$d$ be a random vector following a probability distribution~$\mb{P}$, where each component~$d(v)$ indicates the random demand of customer~$v \in V_+$. We assume that~$\mb{P}$ is \emph{given by scenarios}, meaning that we receive vectors~$d^1, \ldots, d^{N} \in \Q^{V_+}_+$ in the input, each representing a certain scenario. We are also given probabilities~$p_1, \ldots, p_N \in \Q_+$ that sum up to one and such that~$\mb{P}(d = d^\xi) = p_\xi$, for every~$\xi \in [N]$. The expected demand vector is denoted~$\bar{d} \coloneqq \mb{E}[d]$ and every entry of~$\bar{d}$ is assumed to be strictly positive\label{sym:demands}. The input for the VRPSD with scenarios is represented by the tuple~$\mc{I}_{\tsc{vrpsd}} = (G, c, k, C, N, d^1, \ldots, d^N, p_1, \ldots, p_N)$.

\subsubsection{CVRP formulation}
\label{subsection:cvrp}

We say that a routing plan~$\mc{R}$ is feasible for the VRPSD if it is feasible for the CVRP with respect to the demand vector~$\bar{d}$, that is,~$\bar{d}(R) \coloneqq \sum_{v \in V_+(R)} \bar{d}(v) \leq C$, for every~$R \in \mc{R}$. (Instead, some works~\citep{florio2020, legault2025superadditivity, hoogendoorn2025evaluation} use~$\bar{d}(R) \leq fC$, where~$f > 0$ is called a \emph{load factor}. Most of the literature uses~$f = 1$, but~\cite{hoogendoorn2025evaluation} recently considered a version of the problem where~$f$ is large enough so that no capacity constraint is imposed.)

Using the classical CVRP formulation of~\cite{laporte1983branch}, we model the set of routing plans feasible for the VRPSD as the integer vectors belonging to the polytope
\begin{equation}
\tag{$\mc{X}_{\tsc{cvrp}}$}
\label{set:cvrp}
\mc{X}_{\tsc{cvrp}} := \mc{X}_{\tsc{sub}} \cap 
\left\{x \in [0, 2]^{E}:~
\begin{aligned}
& x(\delta(0)) = 2 k, & \\
& x(S) \leq |S| - \bar{k}(S), & \forall \emptyset \subsetneq S \subseteq V_+
\end{aligned}
\right\},
\end{equation}
where~$\bar{k}(S) \coloneqq \lceil \bar{d}(S) / C\rceil$.  Inequalities~$x(S) \leq |S| - \bark{S}$ are the well-known \emph{rounded capacity inequalities} (RCIs) (in their ``inside form'') and they imply the SECs since~$\bar{d}(S)$ is positive.

\subsubsection{The classical recourse policy}
\label{subsection:classical_recourse}

When demands are independent, several recourse policies have been proposed in the VRPSD literature~\citep{dror89, Yee1980, Salavati2019175, salavati2019trsc}. However, to our knowledge, no existing work explicitly addresses recourse policies for the VRPSD with scenarios. Still, the \emph{classical recourse policy} can be easily adapted as follows.

Consider traversing a directed route~$\oa{R} = (v_1, \ldots, v_\ell)$ and suppose that the sum of the realized demands exceeds the vehicle capacity when we reach customer~$v_j$, with~$j \in [\ell]$. In this case, following standard conventions in the literature (see~\cite{OYOLA2018193} for variants that consider exact stockouts and nonsplittable demands), the classical recourse policy prescribes that the vehicle executes a back-and-forth trip between the depot and~$v_j$ before continuing the route. Based on the formula of~\cite{dror89}, the recourse cost of the directed route~$\oa{R}$ in scenario~$\xi \in [N]$ under the classical recourse policy is given by
\begin{equation}
    \tag{$\mc{Q}^\xi_C$}
    \label{eq:formula_dror}
    \mc{Q}^\xi_C(\oa{R}) \coloneqq \sum_{j \in [\ell]} 2 \, c_{0v_j} \sum_{t = 1}^\infty \, \mb{I}\left(\sum_{i \in [j - 1]} d^\xi(v_i) \leq t C < \sum_{i \in [j]} d^\xi(v_i)\right).
\end{equation}

The expected recourse cost of the directed route~$\oa{R}$ is then defined as~$\mc{Q}_C(\oa{R}) \coloneqq \sum_{\xi \in [N]} p_\xi \Qcscen(\oa{R})$, and the recourse cost of the (undirected) route~$R$ is set as~$\mc{Q}_C(R) \coloneqq \min\left\{ \mc{Q}_C(\oa{R}),  \mc{Q}_C(\cev{R}) \right\}$. From now on, we shall refer to~$\Qc$ as the \emph{classical recourse function}. By computing the accumulated demands along the directed routes in each scenario, we can evaluate~$\Qc(R)$ in polynomial time on the number of scenarios~$N$ and the size of~$V_+$. The \emph{VRPSD with scenarios under the classical recourse policy} can now be concisely expressed as~$\vrpr(\Qc, \Xcvrp)$.

\section{Existing disaggregated ILS formulations and the motivation for a unifying framework}
\label{section:review}

In the context of the general VRPR introduced in Definition~\ref{def:problem_vrp}, we now briefly review two existing ILS formulations for the VRPSD: the partial route-based formulation of~\cite{hoogendoorn2023improved} (Section~\ref{subsection:review_hoogendoorn}), and the DL-shaped method of~\cite{parada2024disaggregated} (Section~\ref{subsection:review_dl-shaped}). We focus on these approaches because the formulation of~\cite{hoogendoorn2023improved} is valid for any recourse function and yields the best-performing algorithm among all methods with this generality. On the other hand, the DL-shaped method was only shown to be valid for superadditive functions~\citep{legault2025superadditivity}, in which case it outperforms the algorithm of~\cite{hoogendoorn2023improved}. 

As mentioned in Section~\ref{section:intro}, both formulations replace the epigraph variable~$\rho$ with the sum~$\sum_{v \in V_+} \theta_v$, but they differ in how the value of~$\rho$ is decomposed across the~$\theta_v$-variables. This distinction is examined further in Section~\ref{subsection:limitations}, where we highlight the limitations of the existing approaches and motivate the development of a unifying framework.

In the remainder, we shall repeatedly use the following definition.

\begin{definition}
    \label{def:activation_function}
    Let~$\mc{X}' \subseteq \X \cap \Z^E$. A function~$W( \, \cdot \, ; \mc{X}') : \R^E \to \R$ is an \emph{activation function} with respect to~$\mc{X}'$ if~$W( \, \cdot \, ; \mc{X}')$ is affine and, for every~$x \in \X \cap \Z^E$, 
    \begin{align*} 
    W(x ; \mc{X}') 
    \begin{cases} = 1, & \text{if } x \in \mc{X}', \\ 
    \leq 0, & \text{otherwise.} 
    \end{cases}
    \end{align*}
    We say that~$W(x ; \mc{X}')$ is \emph{active} at~$\mc{X}'$ and \emph{inactive} at~$(\mc{X} \cap \Z^E) \setminus \mc{X}'$.
\end{definition}

\noindent
Note that Definition~\ref{def:activation_function} depends on the set~$\X$ that we fixed in Assumption~\ref{assumption:formulation}. To avoid repeating ourselves, whenever we say that~$\W( x ; \mc{X}')$ is an activation function, it is implicit that it is with respect to~$\mc{X}' \subseteq \X \cap \Z^E$.

\subsection{Partial route inequalities and route-based recourse disaggregation}
\label{subsection:review_hoogendoorn}

In this subsection, we consider ILS cuts that are active at solutions adhering to certain \emph{partial route} structures, which are formally defined as follows.
\begin{definition}
    \label{def:partial_route}
    Let~$H = (S_1, \ldots, S_\ell)$ be a tuple of disjoint subsets of~$V_+$. For each~$i \in [\ell]$, we call~$S_i$ an \textit{unstructured component} if~$|S_i| > 1$. We say that~$H$ is a \emph{partial route} if there exists no index~$i \in [\ell - 1]$ such that both~$S_i$ and~$S_{i + 1}$ are unstructured components.
\end{definition}
For convenience, we set~$S_0 = S_{\ell+ 1} = \{0\}$ and we sometimes regard~$\H$ as an undirected graph, meaning that we define~$V(\H) \coloneqq \cup_{i \in [\ell + 1]} S_i$ and~$E(\H) \coloneqq \cup_{i \in [\ell]} (E(S_i, S_{i - 1} \cup S_{i + 1}) \cup E(S_i))$. Following the notation for routes, the customers in~$\H$ are denoted by~$V_+(\H) \coloneqq V_+ \cap V(\H)$.

Note that partial routes can be viewed as generalizations of routes~\citep{ hoogendoorn2023improved}, since we may interpret a route~$R = (v_1, \ldots, v_\ell)$ as a partial route~$\H = (S_1, \ldots, S_\ell)$, where~$S_i = \{v_i\}$ for every~$i \in [\ell]$. In such cases, we say that the partial route~$\H$ \emph{corresponds} to route~$R$, and we write~$\H = R$.

A partial route~$\H = (S_1, \ldots, S_\ell)$ describes a partial ordering of the customers in a route as follows:
\begin{definition}
    \label{def:exact_adherence}
    Let~$\H = (S_1, \ldots, S_\ell)$ be a partial route. For convenience, for each~$i \in [\ell]$, define~$h_i \coloneqq \sum_{j \in [i]} |S_j|$. A directed route~$\oa{R} = (v_1, \ldots, v_t)$ \emph{adheres} to~$\H$ if~$t = h_\ell$ and, for every~$i \in [\ell]$,~$S_i = \left\{v_{h_{i - 1} + 1}, \ldots, v_{h_i}\right\}$.
    In other words,
    $$\oa{R} = (\underbrace{v_1, \ldots, v_{h_1}}_{S_1}, \ldots, \underbrace{v_{h_{i - 1} + 1}, \ldots, v_{h_i}}_{S_i}, \ldots, \underbrace{v_{h_{\ell - 1} + 1}, \ldots, v_{h_\ell}}_{S_\ell}).$$

    \noindent
    An (undirected) route~$R$ \emph{adheres} to~$\H$ if either~$\oa{R}$ or~$\cev{R}$ adheres to~$H$.
\end{definition}

With these definitions in hand, we use~$\mc{X}_{=}(\H)$ to refer to the set of solutions containing a route that adheres to partial route~$\H$:
\begin{align}
    & \mc{X}_{=}(H) \coloneqq \left\{ x \in \X \cap \Z^E :~\text{there exists~$R \in \mc{R}(x)$ such that~$R$ adheres to~$\H$} \right\}. \tag{$\mc{X}_{=}(H)$} \label{set:x_h}
\end{align}
\cite{hoogendoorn2023improved} use these concepts to define an activation function~$\Whs(x ; \mc{X}_{=}(\H))$ which, by definition, is active only at the solutions containing a route that adheres to~$\H$.

\begin{remark}
\label{remark:minor_contribution}
As a minor contribution, Appendix~\ref{appendix:activation_function} presents a derivation of~$\Whs(x ; \Xh{H})$ that we believe is simpler and more intuitive than the one in~\cite{hoogendoorn2023improved}. This simplification might be particularly valuable given the history of partial route activation functions: they were first proposed for the multiple vehicle case by~\cite{laporte2002}, later shown to be incorrect by~\cite{jabali2014}, whose own activation functions were also recently shown to be incorrect by~\cite{hoogendoorn2023improved}.~\hfill\Halmos
\end{remark}

Recall from Section~\ref{section:intro} that~\cite{hoogendoorn2023improved} adopt a \emph{route-based recourse disaggregation}, where the variable~$\rho$ is replaced by the sum of variables~$\sum_{v \in V_+} \theta_v$. The recourse cost of a partial route~$\H$ is then mapped to the customer~$v_H$ in the partial route with the lowest index. In particular, the recourse cost~$\mc{Q}(R)$ of a route~$R$ is mapped to the lowest-index customer~$v_R \in V_+(R)$. As observed by the authors, this disaggregation enables ILS cuts that capture the recourse cost of a route~$R$ at every solution~$x \in \X \cap \Z^E$ with~$R \in \mc{R}(x)$. Such cuts were not available in previous formulations, which do not use recourse disaggregation and capture the recourse cost only through the~$\rho$ variable.

Now suppose that, for every partial route~$\H$, we have access to a lower bound~$\mc{L}(\H)$ satisfying~$\mc{Q}(R) \geq \mc{L}(\H)$, for every route~$R$ that adheres to~$\H$. Moreover, whenever~$\H$ corresponds to a route~$R$, we assume that~$\mc{L}(\H) = \mc{Q}(R)$. The formulation proposed by~\cite{hoogendoorn2023improved} is as follows:
\begin{subequations}
\label{formulation:partial_route}
\begin{align} 
\min ~~& c^\T x + \mathbf{1}^\T \theta, & \nonumber \\
\text{s.t.~~} & \theta_{v_H} \geq \mc{L}(H) \cdot \Whs(x ; \Xh{H}), & \forall \text{~partial route~$H$}, \label{partial_route:cuts} \\
& (x, \theta) \in (\X \cap \Z^E) \times \R^{V_+}_+. &
\end{align}
\end{subequations}

\noindent
Inequalities~\eqref{partial_route:cuts} are their \emph{partial route-split inequalities}. Since they generalize the so-called \emph{route-split inequalities}~$\theta_{v_R} \geq \mc{Q}(R) \cdot \Whs(x ; \Xh{R})$ (for every route~$R$), it follows that Formulation~\eqref{formulation:partial_route} correctly models problem~$\vrpr(\mc{Q}, \X)$. Although this claim was already stated in~\cite{hoogendoorn2023improved} in the context of the VRPSD, a formal proof was not provided. Using our framework, we prove a more general result later in Theorem~\ref{thm:route_formulation}.

Lastly, we observe that~\cite{hoogendoorn2023improved} separate the partial route-split inequalities using a heuristic procedure, with the separation being exact at the integer solutions. Since their heuristic is described somewhat informally, we give a precise implementation of their ideas in Appendix~\ref{appendix:partial_route_separation}. We note in passing that~\cite{hoogendoorn2023improved} also introduced the \emph{multi-route-split inequalities}, but we do not consider them in this work, since their associated activation functions may return values greater than one and thus fall outside the scope of Definition~\ref{def:activation_function} (which we use later to formally define ILS cuts).

\subsection{The DL-shaped method and customer-based recourse disaggregation}
\label{subsection:review_dl-shaped}

In the context of the VRPSD with independent demands, the approach of~\cite{hoogendoorn2023improved} was significantly improved by the \emph{DL-shaped method}~\citep{parada2024disaggregated, legault2025superadditivity}. Rather than separating partial route inequalities, the DL-shaped method separates two classes of ILS cuts: \emph{path cuts} and \emph{set cuts}. Following the presentations of the authors, we assume throughout this subsection that every first-stage solution~$x \in \X \cap \Z^E$ and route~$R \in \mc{R}(x)$ satisfies~$\bar{d}(R) \le C$.

To discuss the path cuts, we first need some concepts. Let~$R = (v_1, \ldots, v_\ell)$ be a route. A route~$R'$ is a \emph{subroute} of~$R$ if there exists~$j \in [\ell]$ and~$i \in [j]$ such that~$R' = (v_i, \ldots, v_j)$. We write~$R' \subseteq R$ to
indicate that~$R'$ is a subroute of~$R$ (even though~$R'$ is not necessarily a subgraph of~$R$). The notation~$\mc{X}_{\supseteq}(R')$ is then used to refer to the set of solutions that contains~$R'$ as a subroute, that is,
\begin{align}
    & \mc{X}_{\supseteq}(R') \coloneqq \left\{ x \in \X \cap \Z^E :~\text{there exists~$R \in \mc{R}(x)$ such that~$R' \subseteq R$} \right\}. \tag{$\mc{X}_{\supseteq}(R')$} \label{set:path_cut}
\end{align}
(The proof in Appendix~\ref{appendix:activation_function} explains why we use~$=$ and~$\supseteq$ in the subscripts of~$\Xh{H}$ and~$\supsetXh{R'}$.) A \emph{path cut} is an inequality of the form~$\theta(V_+(R')) \geq \mc{Q}(R') \cdot W_{DL}(x ; \supsetXh{R'})$, where~$W_{DL}(x ; \supsetXh{R'}) \coloneqq 1 + \sum_{e \in E(R') \setminus \delta(0)} (x_e - 1)$ is the activation function proposed by~\cite{parada2024disaggregated}.

On the other hand, a set cut is an ILS inequality that is active whenever a set of customers is served with the minimum required number of routes. Formally, for every~$\emptyset \subsetneq S \subseteq V_+$ and~$k' \in \Z_{++}$, define
\begin{align}
    & \mc{X}(S, k') \coloneqq \left\{ x \in \X \cap \Z^E : x(E(S)) = |S| - k' \right\}~ \text{ and } \label{set:x_set} \tag{$\mc{X}(S, k')$} \\
    & W_{DL}(x ; \mc{X}(S, k')) \coloneqq 1 + (x(E(S)) - |S| + k'). \label{eq:activation_set} \tag{$W_{DL}(x ; \mc{X}(S, k'))$}
\end{align}
A \emph{set cut} is an inequality of the form~$\theta(S) \geq \mc{L}(S) \cdot \setWdl{x ; \Xset{S, \bark{S}}}$, where~$\mc{L}(S)$ is a ``properly defined'' recourse lower bound and we recall that~$\bark{S} = \lceil \bar{d}(S) / C \rceil$\label{sym:LS}.

One of the main contributions of this paper is to precisely characterize what constitutes a valid recourse lower bound (this is part of Contribution~\ref{cont:recoursedisag}). For the moment, we describe the specific recourse lower bound~$\mc{L}(S) = \mc{L}_{DL}(S)$ used by the DL-shaped method. This bound is defined so that it provides a lower bound on the recourse cost of any collection of routes~$R$ (with~$\bar{d}(R) \leq C$) that forms a partition of~$S$. More specifically, we have that~$\mc{L}_{DL}(S) \leq \sum_{i = 1}^{\bark{S}} \mc{Q}(R_i)$ for every collection of routes~$R_1, \ldots, R_{\bark{S}}$ such that~$\{V_+(R_i)\}_{i \in [\bark{S}]}$ forms a partition of~$S$ and~$\bar{d}(R_i) \le C$ for every~$i \in [\bark{S}]$. 

Combining the path and set cuts yields the formulation below:
\begin{subequations}
\label{formulation:DL-shaped}
\begin{align} 
\min ~~& c^\T x + \mathbf{1}^\T \theta, & \nonumber \\
\text{s.t.~~} & \theta(V_+(R)) \geq \mc{Q}(R) \cdot \Wdl{x ; \supsetXh{R}}, & \forall \text{~route~$R$}, \label{DL-shaped:path_cuts} \\
& \theta(S) \geq \mc{L}_{DL}(S) \cdot \setWdl{x ; \Xset{S, \bark{S}}}, & \forall \emptyset \subsetneq S \subseteq V_+, \label{DL-shaped:set_cuts}\\
& (x, \theta) \in (\X \cap \Z^E) \times \R^{V_+}_+, &
\end{align}
\end{subequations}
and we refer the reader to~\citep{parada2024disaggregated} for the separation of the path and set cuts.

In contrast to Formulation~\eqref{formulation:partial_route}, an optimal solution~$(\bar{x}, \bar{\theta})$ to Formulation~\eqref{formulation:DL-shaped} may attain
positive~$\bar{\theta}_v$-values at multiple customers~$v$ on the same route~$R \in \mc{R}(\bar{x})$. In this sense,~\cite{legault2025superadditivity}~state that the DL-shaped method uses a \emph{customer-based recourse disaggregation}. This alternative disaggregation allows the \mbox{DL-shaped} method to use the set cuts to strengthen the bounds obtained from the path cuts, which leads to strong dual bounds in practice, as the set cuts can be active over a wide range of solutions. However, as we discuss in the next subsection, Formulation~\eqref{formulation:DL-shaped} is not, in general, a valid reformulation of problem~$\vrpr(\mc{Q}, \X)$. In particular, Formulation~\eqref{formulation:DL-shaped} cannot be used to solve our target problem~$\vrpr(\Qc, \Xcvrp)$.

\subsection{Limitations and the need for a unifying framework}
\label{subsection:limitations}

As mentioned in Section~\ref{subsection:review_hoogendoorn}, the approach of~\cite{hoogendoorn2023improved} is valid for any recourse function~$\mc{Q}$. On the other hand, although the DL-shaped method achieved superior performance for VRPSDs with independent demands, the method was only shown to be valid for \emph{superadditive} recourse functions.

\subsubsection{Superadditive recourse functions}
\label{subsection:limitation_superadditive}

To explain superadditivity, we need some additional notation. Given two routes~$R_1 = (v_1, \ldots, v_a)$ and~$R_2 = (v_{a+1}, \ldots, v_\ell)$ with~$V_+(R_1) \cap V_+(R_2) = \emptyset$, we denote by~$R_1 \oplus R_2$ the concatenation of~$R_1$ and~$R_2$, i.e.,~$R_1 \oplus R_2 = (v_1, \ldots, v_\ell)$\label{sym:concat}. Thus, whenever we write~$R_1 \oplus R_2$~(without explicitly declaring~$R_1$ and~$R_2$ beforehand), we implicitly assume that~$R_1$ and~$R_2$ are subroutes of~$R_1 \oplus R_2$ visiting disjoint customer sets whose concatenation yields~$R_1 \oplus R_2$.~\cite{legault2025superadditivity} define a recourse function~$\mc{Q}$ as \emph{superadditive} if, for every route~$ R_1 \oplus R_2$ satisfying~$\bar{d}(R_1 \oplus R_2) \leq C$, we have that~$\mc{Q}(R_1 \oplus R_2) \geq \mc{Q}(R_1) + \mc{Q}(R_2)$ (recall that the DL-shaped method assumes that a feasible route~$R$ satisfies~$\bar{d}(R) \leq C$).

The main theorem of~\cite{legault2025superadditivity} states that the DL-shaped method yields a valid reformulation of problem~$\vrpr(\mc{Q}, \X)$ if and only if~$\mc{Q}$ is superadditive. However, to show necessity, they rely on the assumption that an unlimited number of vehicles is available, leaving open whether the DL-shaped method could still be valid when~$k$ is fixed, as is typical in the literature. In Section~\ref{subsection:path_cuts}, we close this gap by introducing a weaker notion of \emph{restricted superadditivity} that characterizes the validity of Formulation~\eqref{formulation:DL-shaped} without additional assumptions on~$\mc{X} \cap \Z^E$ (see Contribution~\ref{cont:rsuper}). Note that, even with our new result the DL-shaped method  cannot be used in our setting, since  the recourse function~$\Qc$ is not restrictively superadditive (Example~\ref{example:qc_not_superadditive}).

\subsubsection{Combining partial route inequalities and set cuts}
\label{subsection:limitation_combining}

Although the DL-shaped method cannot be applied to solve problem~$\vrpr(\Qc, \Xcvrp)$, much of its empirical success comes from the use of the set cuts, whose activation functions are typically active over a large set of points in~$\X$. Indeed, for a given route~$R$ and~$\bar{x} \in \X$, we have that~$\Wdl{\bar{x} ; \supsetXh{R}} = 1$ if and only if~$\bar{x}_e$ is integral for every~$e \in E(R) \setminus \delta(0)$. In contrast, for~$S = V_+(R)$, we have that~$\setWdl{\bar{x} ; \Xset{S, \bark{S}}}$ might be equal to one even when there exists an edge~$e \in E(R) \setminus \delta(0)$ with a fractional value~$\bar{x}_e \in (0, 1)$.

This naturally raises the question of whether the set cuts could also be incorporated into the more flexible Formulation~\eqref{formulation:partial_route}. Unfortunately, as noted in Contribution~\ref{cont:recoursedisag}, this cannot be done effectively, since the route-based disaggregation may prevent the use of meaningful recourse lower bounds in several of the set cuts. 

To better explain this issue, consider adding (possibly many) ILS cuts of the form
\begin{equation}
    \label{ineq:ils_limitation}
    \theta(S) \geq \mc{L}(S) \cdot W(x ; \mc{X}'),
\end{equation}
where~$\emptyset \subsetneq S \subseteq V_+$,~$\mc{X}' \subseteq \X \cap \Z^E$,~$\mc{L}(S) \in \R_{+}$, and~$W(x ; \mc{X}')$ is an activation function. For the moment, we say that these inequalities are \emph{valid} if, for every~$\bar{x} \in \X \cap \Z^E$, there exists~$\bar{\theta} \in \R^{V_+}_+$ satisfying every ILS cut~$\eqref{ineq:ils_limitation}$ and such that~$\allones^\T \bar{\theta} = \mc{Q}(\bar{x})$. In this way, the validity of the inequalities guarantees that we do not cut off any feasible solution in the~$(x, \rho)$-space under the projection~$\rho = \allones^\T \theta$.

Now consider the instance of~$\vrpr(\Qc, \Xcvrp)$ illustrated in Figure~\ref{figure:example_set_cut}, which is based on the instance for the VRPSD with independent demands used in Theorem~4 of~\cite{legault2025superadditivity}. The figure also shows a feasible solution~$x' \in \Xcvrp \cap \Z^E$, with~$\mc{R}(x') = \{(v_1, v_2, v_3, v_4), (v_5)\}$. (This instance will be used later in Example~\ref{example:qc_not_superadditive} to show that~$\Qc$ is not restrictively superadditive.)

\begin{figure}[htb!]
  \centering
  \includegraphics[width=\textwidth]{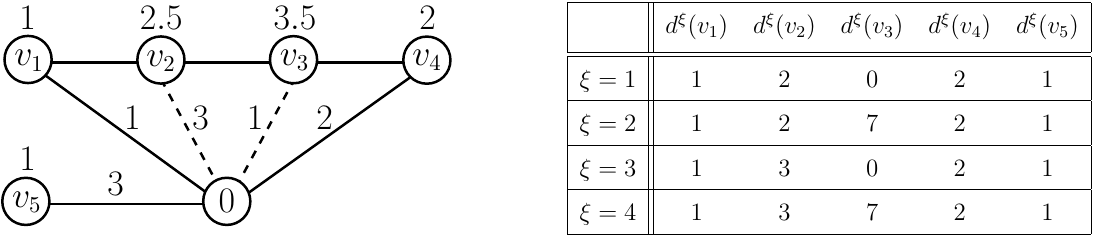}
  \caption{Instance of the VRPSD with scenarios with~5 customers,~$k = 2$,~$C = 10$ and~$N = 4$. The numbers next to the edges correspond to the cost of the edges incident to the depot, while the numbers on top of the vertices refer to the vector of expected demands~$\bar{d} \in \Q^{V_+}_{++}$. Each scenario~$\xi \in [4]$ have a realization probability of~$p_\xi = 1/4$. The table on the right shows the scenario demands vectors~$d^\xi \in \Q^{V_+}_{+}$. Note that failures may occur only in scenarios~$\xi = 2$ and~$\xi = 4$. \label{figure:example_set_cut}}
\end{figure}

Let~$R = (v_1, v_2, v_3, v_4)$. Since~$\Qc(R) = \Qc(\oa{R}) = (2 \cdot c_{0 v_3} + 2 \cdot c_{0 v_4}) / 4 = 3 / 2$, the following (partial) \mbox{route-split} inequality is valid:
\begin{equation} 
\label{ineq:route_cut_limitation} 
\theta_{v_1} \geq \frac{3}{2} \cdot \Whs(x ; \Xh{R}) = \frac{3}{2} \cdot (x_{0, v_1} + 2 x_{v_1, v_2} + x_{v_2, v_3} + 2 x_{v_3, v_4} + x_{0 v_4} - 6). 
\end{equation}
Indeed, for~$\theta^1 = [3/2, 0, 0, 0, 0]^\T$ we have that~$(x', \theta^1)$ satisfies~\eqref{ineq:route_cut_limitation} and~$\allones^\T \theta^1 = \Qc(x')$. For all other solutions~$\bar{x} \in \X \cap \Z^E \setminus \{x'\}$, we have~$\Whs(\bar{x} ; \Xh{R}) \leq 0$, so we can take~$\bar{\theta} = [\mc{Q}(\bar{x}), 0, 0, 0, 0]^\T$. (Note that validity here concerns only inequality~\eqref{ineq:route_cut_limitation}, not all inequalities in Formulation~\eqref{formulation:partial_route}.)

Now let~$S = \{v_2, v_3, v_4\}$ and consider the set cut
\begin{equation} \label{ineq:set_cut_limitation} \theta_{v_2} + \theta_{v_3} + \theta_{v_4} \geq \mc{L}(S) \cdot \setWdl{x ; \mc{X}(S, 1)} = \mc{L}(S) \cdot (x_{v_2, v_3} + x_{v_3, v_4} + x_{v_2, v_4} - 1),
\end{equation}
where~$\mc{L}(S) = \min_{v \in S}\{2 c_{0v}\} \cdot ( \sum_{\xi \in [4] : d^\xi(S) > C} p_{\xi}) = 1$. We prove in Section~\ref{section:application_vrpsd} that inequality~\eqref{ineq:set_cut_limitation} is valid in general (see Contribution~\ref{cont:set}). For the small instance in Figure~\ref{figure:example_set_cut}, one may instead verify validity by inspecting every feasible solution~$x \in \X \cap \Z^E$. For example, for~$x = x'$, we set~$\theta^2 = [0, 0, (2 c_{0 v_3}) / 4, (2 c_{0 v_4}) / 4]^\T = [0, 0, 1/2, 1]^\T$ and observe that~$(x', \theta^2)$ satisfies~\eqref{ineq:set_cut_limitation} and~$\allones^\T \theta^2 = \Qc(x')$. 

We thus conclude that each of inequalities~\eqref{ineq:route_cut_limitation} and~\eqref{ineq:set_cut_limitation} is individually valid, but they are not \emph{jointly} valid. In particular, by summing the two inequalities, we learn that, for~$x = x'$, any~$\bar{\theta} \in \R^{V_+}_+$ that satisfies~\eqref{ineq:route_cut_limitation} and~\eqref{ineq:set_cut_limitation} also satisfies~$\allones^\T \bar{\theta} \geq 5 / 2$, overestimating the recourse cost of~$\mc{Q}(x') = 3/2$.

This example illustrates the challenges of extending the existing approaches to combine these different classes of cuts and motivates the development of our unifying framework (Contribution~\ref{cont:recoursedisag}). By explicitly defining the recourse disaggregation and precisely specifying the valid range of recourse lower bounds for ILS cuts, our approach allows the combination of (generalized) partial route inequalities~\citep{hoogendoorn2023improved} and set cuts~\citep{parada2024disaggregated} for essentially any recourse function, including~$\Qc$.

\section{The framework}
\label{section:framework}

This section introduces a unifying framework for ILS-based formulations of VRPRs, based on formal definitions of recourse disaggregation (Definition~\ref{def:recourse_disaggregation}) and recourse lower bounds (Definition~\ref{def:recourse_lower_bound}). The framework represents the epigraph~$\epi(\mc{Q}, \mc{X})$ in an extended space where the recourse variable~$\rho$ is expressed as the sum of disaggregated recourse variables. In Theorem~\ref{theorem:ils}, we then characterize when valid inequalities for this extended formulation recover the epigraph after projection.

\subsection{Recourse disaggregation}
\label{subsection:disaggregated}

Recall from Section~\ref{section:review} that previous works~\citep{seguin, cote2020vehicle, hoogendoorn2023improved, parada2024disaggregated, legault2025superadditivity} propose disaggregating the recourse variable~$\rho$ over the set of customers~$V_+$ by writing~$\rho = \sum_{v \in V_+} \theta_v$. We generalize the domain~$V_+$ of the~\mbox{$\theta$-variables} to an arbitrary finite set~$\Omega$ as follows.

\begin{definition}
    \label{def:recourse_disaggregation}
    Let~$\Omega$ be a nonempty finite set and let~$\hat{\mc{Q}}$ be a function that maps routes and elements in~$\Omega$ to nonnegative rational values, i.e.,~$\hat{\mc{Q}}(R, \omega) \in \Q_+$, for every route~$R$ and~$\omega \in \Omega$. We say that~$\hat{\mc{Q}}$ is a \emph{disaggregation of~$\mc{Q}$ along~$\Omega$} if~$\mc{Q}(R) = \sum_{\omega \in \Omega} \hat{\mc{Q}}(R, \omega)$, for every~$\bar{x} \in \X \cap \Z^E$ and~$R \in \mc{R}(\bar{x})$.
\end{definition}

\noindent
We also define~$\Omega(R)$ as the \emph{support} of~$\hat{\mc{Q}}$ with respect to route~$R$, i.e.,~$\Omega(R) \coloneqq \{\omega \in \Omega : \hat{\mc{Q}}(R, \omega) > 0\}$.

Given a disaggregation~$\hat{\mc{Q}}$ as in Definition~\ref{def:recourse_disaggregation}, we define the \emph{feasible region}
\begin{equation}
    \tag{$\mc{F}$}
    \label{set:F}
    \mc{F}(\hat{\mc{Q}}, \mc{X}, \Omega) \coloneqq \left\{ (x, \theta) \in  (\X \cap \Z^E) \times \R^{\Omega}_+ : \theta_\omega \geq \sum_{R \in \mc{R}(x)} \hat{\mc{Q}}(R, \omega),~~\forall \omega \in \Om \right\}.
\end{equation}

\noindent
The next claim verifies that the definition of~$\F{\hat{\mc{Q}}, \mc{X}, \Omega}$ indeed yields a reformulation of~\ref{problem:vrpr} in the~\mbox{$(x, \theta)$-space}. From now on, for any set~$P \subseteq \R^E \times \R^\Omega$, we define~$$\projrho (P) \coloneqq \{(x, \rho) : \rho = \allones^\T \theta,~(x, \theta) \in P\} \label{sym:projrho}.$$
\begin{claim}
    \label{claim:rewrite_epigraph}
    Let~$\Om$ be a nonempty finite set and let~$\hat{\mc{Q}}$ be a disaggregation of~$\mc{Q}$ along~$\Om$, then~$$\epiQ{\mc{Q}, \X} = \projrho(\mc{F}(\hat{\mc{Q}}, \mc{X}, \Omega)).$$
\end{claim}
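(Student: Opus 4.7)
The plan is to prove the equality by double inclusion, in both cases reducing to the defining identity $\mc{Q}(R)=\sum_{\omega\in\Omega}\hat{\mc{Q}}(R,\omega)$ together with the extension $\mc{Q}(x)=\sum_{R\in\mc{R}(x)}\mc{Q}(R)$ for $x\in\X\cap\Z^E$.

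For the inclusion $\proj_{(x,\rho)}(\F{\hat{\mc{Q}},\mc{X},\Omega})\subseteq\epiQ{\mc{Q},\X}$, I would take any $(x,\rho)$ in the projection, so that there exists $\theta\in\R_+^{\Omega}$ with $(x,\theta)\in\F{\hat{\mc{Q}},\mc{X},\Omega}$ and $\rho=\allones^{\T}\theta$. The membership in~\F{\hat{\mc{Q}},\mc{X},\Omega} yields $x\in\X\cap\Z^{E}$ and the $|\Omega|$ inequalities $\theta_{\omega}\geq\sum_{R\in\mc{R}(x)}\hat{\mc{Q}}(R,\omega)$. Summing these over $\omega$, swapping the order of summation, and using the disaggregation identity gives
\[
\rho \;=\; \sum_{\omega\in\Omega}\theta_{\omega} \;\geq\; \sum_{R\in\mc{R}(x)}\sum_{\omega\in\Omega}\hat{\mc{Q}}(R,\omega) \;=\; \sum_{R\in\mc{R}(x)}\mc{Q}(R) \;=\; \mc{Q}(x),
\]
which shows $(x,\rho)\in\epiQ{\mc{Q},\X}$.

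For the reverse inclusion, I would start from $(x,\rho)\in\epiQ{\mc{Q},\X}$ and construct an explicit witness $\theta$. Define $\theta^{*}_{\omega}:=\sum_{R\in\mc{R}(x)}\hat{\mc{Q}}(R,\omega)\geq 0$ for each $\omega\in\Omega$; by the disaggregation identity, $\allones^{\T}\theta^{*}=\mc{Q}(x)\leq\rho$, so the slack $\rho-\mc{Q}(x)$ is nonnegative. Pick any element $\omega_{0}\in\Omega$ (which exists since $\Omega$ is nonempty) and set $\theta_{\omega_{0}}:=\theta^{*}_{\omega_{0}}+(\rho-\mc{Q}(x))$ and $\theta_{\omega}:=\theta^{*}_{\omega}$ for every $\omega\neq\omega_{0}$. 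Then $\theta\in\R_+^{\Omega}$, $\theta_{\omega}\geq\theta^{*}_{\omega}=\sum_{R\in\mc{R}(x)}\hat{\mc{Q}}(R,\omega)$ for all $\omega$, and $\allones^{\T}\theta=\rho$, so $(x,\theta)\in\F{\hat{\mc{Q}},\mc{X},\Omega}$ projects onto $(x,\rho)$.

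There is no real obstacle here: the argument is mechanical once both sides are unpacked. The only subtlety worth flagging in the write-up is that the construction of the witness $\theta$ relies on $\Omega$ being nonempty, which is already required in Definition~\ref{def:recourse_disaggregation}; and that the nonnegativity of $\hat{\mc{Q}}$ is what keeps $\theta^{*}\geq 0$, ensuring the domain constraint $\theta\in\R_+^{\Omega}$ is met.
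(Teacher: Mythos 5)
Your proof is correct and follows essentially the same route as the paper's, which compresses the argument into a single chain of set equalities obtained by substituting $\theta_\omega = \sum_{R \in \mc{R}(x)} \hat{\mc{Q}}(R,\omega)$ into the definition of the epigraph. Your double-inclusion write-up merely makes explicit the slack-placement witness (absorbing $\rho - \mc{Q}(x)$ into one coordinate $\theta_{\omega_0}$) that the paper's last equality leaves implicit, since $\projrho$ requires $\rho = \allones^\T\theta$ exactly while $\F{\hat{\mc{Q}},\mc{X},\Omega}$ only imposes inequalities on the $\theta_\omega$.
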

\proof
\begin{align}
    \epiQ{\mc{Q}, \X} & = \left\{(x, \rho) \in (\X \cap \Z^E) \times \R_+ : \rho \geq \sum_{R \in \mc{R}(x)} \sum_{\omega \in \Om} \hat{\mc{Q}}(R, \omega) \right\} \nonumber \\
    & = \left\{(x, \rho) \in (\X \cap \Z^E) \times \R_+ : \rho \geq \sum_{\omega \in \Om} \theta_\omega,~\theta_\omega = \sum_{R \in \mc{R}(x)} \hat{\mc{Q}}(R, \omega),~\forall \omega \in \Om \right\} \nonumber \\
    & = \projrho(\mc{F}(\hat{\mc{Q}}, \mc{X}, \Omega)). & \tag*{\Halmos}
\end{align}

\begin{remark}
\label{remark:disaggregation}
A disaggregation~$\hat{\mc{Q}}$ of~$\mc{Q}$ along~$\Om$ always exists, since we can choose an arbitrary element~$\omega \in \Om$ and set our disaggregation so that~$\hat{\mc{Q}}(R, \omega) = \mc{Q}(R)$, for every route~$R$. More interestingly, the structure of the given recourse function~$\mc{Q}$ often suggests disaggregations where the recourse cost of route~$R$ is distributed across multiple customers, allowing several customers to attain positive~$\hat{\mc{Q}}(R, \omega)$ values. For example, consider the formula for the VRPSD recourse function~$\Qc$ and set~$\Om = V_+$. Let~$R = (v_1, \ldots, v_\ell)$ be a route and assume without loss of generality that~$\Qc(R) = \Qc(\oa{R})$. In this case, we may set~$$\hat{\mc{Q}}_C(R, v_j) = 2 \, c_{0v_j} \sum_{t = 1}^\infty \mb{P}\left(\sum_{i \in [j - 1]} d(v_i) \leq t C < \sum_{i \in [j]} d(v_i)\right),$$
for each~$j \in [\ell]$ (and consequently,~$\hat{\mc{Q}}_C(R, v) = 0$, for all~$v \in V_+ \setminus V_+(R)$).~\hfill\Halmos
\end{remark}

For convenience, in the rest of this paper, we fix a nonempty finite set~$\Om$ and a corresponding disaggregation~$\hat{\mc{Q}}$. With a slight abuse of notation, we then write~$\mc{Q}(R, \omega)$ rather than~$\hat{\mc{Q}}(R, \omega)$. In particular, this implies that the notation~$\F{\mc{Q}, \mc{X}, \Omega}$ is well-defined, and for every route~$R$,~$\Om(R) = \{\omega \in \Omega : \mc{Q}(R, \omega) > 0\}$.

\paragraph{\textbf{Why define}~$\F{\mc{Q}, \mc{X}, \Omega}$\textbf{?}} 

Previous ILS-based algorithms that use recourse disaggregation do not explicitly specify the choice of the recourse disaggregation (as in Definition~\ref{def:recourse_disaggregation}) nor the corresponding feasible region~$\F{\mc{Q}, \mc{X}, \Omega}$. Instead, they demonstrate that their ILS cuts are \emph{valid} (in the sense discussed in Section~\ref{subsection:limitation_combining}). In other words, they show that feasible solutions~$(\bar{x}, \bar{\theta}) \in (\X \cap \Z^E) \times \Q^{V_+}_+$ to their formulation satisfy~$\allones^\T \bar{\theta} \geq \mc{Q}(\bar{x})$ and equality can be achieved.

This approach limits the ways in which new inequalities can be derived. For example, to show that the path cuts and set cuts of the DL-shaped method are \emph{jointly valid},~\cite{parada2024disaggregated} and~\cite{legault2025superadditivity} rely on the dominance of path cuts over set cuts at integer solutions, so their approach does not extend to formulations where path cuts are not used. In addition, the path cuts are valid only when the recourse function is \emph{restrictively superadditive} (Contribution~\ref{cont:rsuper}), which further limits the applicability of their set cuts.

A closely related drawback is that previous formulations may hide the fact that the validity of the inequalities depends on the choice of the recourse disaggregation. Suppose that~$\hat{\mc{Q}}^1$ and~$\hat{\mc{Q}}^2$ are two disaggregations of~$\mc{Q}$ along the same set~$\Om$. By Claim~\ref{claim:rewrite_epigraph}, both~$\F{\hat{\mc{Q}}^1, \mc{X}, \Omega}$ and~$\F{\hat{\mc{Q}}^2, \mc{X}, \Omega}$ are (nonlinear) \emph{extended formulations} of~$\epiQ{\mc{Q}, \mc{X}}$, that is,~$\projrho(\F{\hat{\mc{Q}}^1, \mc{X}, \Omega}) = \projrho(\F{\hat{\mc{Q}}^2, \mc{X}, \Omega}) = \epiQ{\mc{Q}, \X}$.
However, inequalities valid for~$\F{\hat{\mc{Q}}^1, \mc{X}, \Omega}$ may not be valid for~$\F{\hat{\mc{Q}}^2, \mc{X}, \Omega}$ (or vice versa), so one cannot simply combine inequalities derived under different disaggregations, i.e., it may be the case that~$\projrho\left(\F{\hat{\mc{Q}}^1, \mc{X}, \Omega} \cap \F{\hat{\mc{Q}}^2, \mc{X}, \Omega}\right) \subsetneq \epiQ{\mc{Q}, \X}$. Indeed, this issue was illustrated earlier by the example in Section~\ref{subsection:limitations}.

Our framework addresses both of the previous obstacles by explicitly defining the recourse disaggregation and the associated feasible region~$\F{\mc{Q}, \mc{X}, \Omega}$. This allows us to precisely determine the range of admissible recourse lower bounds (Claim~\ref{claim:valid_lower_bound}). As a result, we can verify the validity of different ILS cuts independently, without relying on dominance relationships or superadditivity assumptions. 

\subsection{Key definitions: recourse lower bounds and ILS cuts}
\label{subsection:activation_function}

In what follows, we generalize the expositions in~\citep{hoogendoorn2023improved, parada2024disaggregated, jabali2014}, and present ILS cuts for~\ref{problem:vrpr} using the concepts of activation functions and recourse lower bounds.

Given an activation function~$\W(x ; \mc{X}')$, consider the inequality~$\theta(U) \geq \tilde{\mc{L}} \cdot \W(x ; \mc{X}')$, where~$U \subseteq \Om$ and~$\tilde{\mc{L}} \in \Q_+$. The range of values of~$\tilde{\mc{L}}$ that yield a valid inequality for~$\F{\mc{Q}, \mc{X}, \Omega}$ can be easily determined.
\begin{claim}
\label{claim:valid_lower_bound}
Let~$\W(x ; \mc{X}')$ be an activation function,~$U \subseteq \Om$ and~$\tilde{\mc{L}} \in \Q_+$. Inequality~$\theta(U) \geq \tilde{\mc{L}} \cdot \W(x ; \mc{X}')$ is valid for~$\F{\mc{Q}, \mc{X}, \Omega}$ if and only if~$\sum_{\omega \in U} \sum_{R \in \mc{R}(\bar{x})} \mc{Q}(R, \omega) \geq \tilde{\mc{L}}$, for every~$\bar{x} \in \mc{X}'$.
\end{claim}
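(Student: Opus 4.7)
The plan is to prove both directions directly from the definitions, splitting the sufficiency argument into two cases based on whether the activation function is ``active'' at a given integer $x$.

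For the ``if'' direction, I would take an arbitrary $(\bar{x}, \bar{\theta}) \in \F{\mc{Q}, \mc{X}, \Omega}$ and verify the inequality. If $\bar{x} \in \mc{X}'$, then $\W(\bar{x} ; \mc{X}') = 1$ by Definition~\ref{def:activation_function}, and summing the defining inequalities of~\eqref{set:F} over $\omega \in U$ yields
\[
    \bar{\theta}(U) \;\geq\; \sum_{\omega \in U}\sum_{R \in \mc{R}(\bar{x})} \mc{Q}(R, \omega) \;\geq\; \tilde{\mc{L}} \;=\; \tilde{\mc{L}}\cdot \W(\bar{x} ; \mc{X}'),
\]
where the second inequality is the assumed hypothesis on $\mc{X}'$. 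If instead $\bar{x} \notin \mc{X}'$, then $\W(\bar{x} ; \mc{X}') \leq 0$ and nonnegativity of $\bar{\theta}$ together with $\tilde{\mc{L}} \geq 0$ give $\bar{\theta}(U) \geq 0 \geq \tilde{\mc{L}} \cdot \W(\bar{x} ; \mc{X}')$.

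For the ``only if'' direction, I would exhibit, for every $\bar{x} \in \mc{X}'$, a specific point in $\F{\mc{Q}, \mc{X}, \Omega}$ that makes the hypothesis follow from validity of the cut. Concretely, set $\bar{\theta}_\omega \coloneqq \sum_{R \in \mc{R}(\bar{x})} \mc{Q}(R, \omega)$ for each $\omega \in \Om$; since $\mc{Q}(R, \omega) \in \Q_+$ by Definition~\ref{def:recourse_disaggregation}, we have $\bar{\theta} \in \R^\Om_+$, and by construction $(\bar{x}, \bar{\theta}) \in \F{\mc{Q}, \mc{X}, \Omega}$. Applying the assumed valid inequality at this point and using $\W(\bar{x} ; \mc{X}') = 1$ yields exactly $\sum_{\omega \in U}\sum_{R \in \mc{R}(\bar{x})} \mc{Q}(R, \omega) \geq \tilde{\mc{L}}$.

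There is no real obstacle here; the argument is essentially bookkeeping around Definitions~\ref{def:recourse_disaggregation} and~\ref{def:activation_function}. The one subtle point worth verifying carefully is the use of $\tilde{\mc{L}} \geq 0$ in the ``inactive'' case of sufficiency: without this assumption the cut could be violated when $\W(\bar{x} ; \mc{X}') < 0$, which is why the statement restricts $\tilde{\mc{L}}$ to $\Q_+$.
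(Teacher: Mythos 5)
Your proof is correct and follows essentially the same route as the paper: both arguments reduce validity to checking the inequality at the extremal point $\bar{\theta}_\omega = \sum_{R \in \mc{R}(\bar{x})} \mc{Q}(R,\omega)$ and then case-split on whether $\W(\bar{x};\mc{X}')$ equals $1$ or is nonpositive, using $\tilde{\mc{L}} \geq 0$ in the inactive case. The paper merely compresses the two directions into a single ``if and only if'' reduction in its first sentence, whereas you unpack them explicitly; the content is identical.
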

\begin{proof}
    By the definition of the set~$\F{\mc{Q}, \mc{X}, \Omega}$, we have that~$\theta(U) \geq \tilde{\mc{L}} \cdot \W(x ; \mc{X}')$ is a valid inequality for~$\F{\mc{Q}, \mc{X}, \Omega}$ if and only if
    \begin{equation}
        \label{ineq:proof_claim_lower_bound}
        \sum_{\omega \in U} \sum_{R \in \mc{R}(\bar{x})} \mc{Q}(R, \omega) \geq \tilde{\mc{L}} \cdot \W(\bar{x} ; \mc{X}'),
    \end{equation}
    for every~$\bar{x} \in \X \cap \Z^E$. By nonnegativity of~$\sum_{\omega \in U} \sum_{R \in \mc{R}(\bar{x})} \mc{Q}(R, \omega)$ and~$\tilde{\mc{L}}$, inequality~\eqref{ineq:proof_claim_lower_bound} always holds for~$\bar{x} \in (\X \cap \Z^E) \setminus \mc{X}'$. Otherwise,~$\bar{x} \in \mc{X}'$ and~$\W(\bar{x} ; \mc{X}') = 1$, meaning that inequality~\eqref{ineq:proof_claim_lower_bound} reduces to~$\sum_{\omega \in U} \sum_{R \in \mc{R}(\bar{x})} \mc{Q}(R, \omega) \geq \tilde{\mc{L}}$. 
\end{proof}

Claim~\ref{claim:valid_lower_bound} leads to the following definition.
\begin{definition}
    \label{def:recourse_lower_bound}
    Let~$U \subseteq \Om$ and~$\mc{X}' \subseteq \X \cap \Z^E$. We say that~$\mc{L}(U, \mc{X}') \in \Q_+$ is a \textit{recourse lower bound} with respect to~$U$ and~$\mc{X}'$ if~$\sum_{\omega \in U} \sum_{R \in \mc{R}(\bar{x})} \mc{Q}(R, \omega) \geq \mc{L}(U, \mc{X}')$, for all~$\bar{x} \in \mc{X}'$.
\end{definition}

\noindent
Note that recourse lower bounds depend on~$\X$ and on the disaggregation of~$\mc{Q}$ that we fixed earlier. For simplicity, whenever we say that~$\L(U, \mc{X}')$ is a recourse lower bound, it is implicit that it is with respect to~$U$ and~$\mc{X}'$.

Having established Definitions~\ref{def:activation_function} and~\ref{def:recourse_lower_bound}, we define \emph{ILS cuts} as follows.

\begin{definition}
    \label{def:ils_cut}
    An \emph{ILS cut} is an inequality of the form~$\theta(U) \geq \L(U, \mc{X}') \cdot \W(x ; \mc{X}')$, where~$U \subseteq \Om$,~$\mc{X}' \subseteq \X \cap \Z^E$,~$\L(U, \mc{X}')$ is a recourse lower bound and~$\W(x; \mc{X}')$ is an activation function. Moreover, a \emph{family of ILS cuts} is a set~$\mc{C}$ of tuples~$(U, \mc{X}', \mc{L}(U, \mc{X}'), \alpha, \beta)$, where~$\theta(U) \geq \L(U, \mc{X}') \cdot (\alpha^\T x + \beta)$ is an ILS cut, so~$\alpha^\T x + \beta$ is an activation function with respect to~$\mc{X}' \subseteq \X \cap \Z^E$. 
\end{definition}

\noindent
With each family of ILS cuts~$\mc{C}$, we associate the MILP formulation below.
\begin{equation}
\tag{ILS($\mc{X}, \mc{C}, \Omega$)}
\label{formulation:ILS}
\begin{aligned} 
\min ~~& c^\T x + \allones^\T \theta, & \nonumber \\
\text{s.t.~~} & \theta(U) \geq \mc{L}(U, \mc{X}') \cdot (\alpha^\T x + \beta), & ~~~~\forall (U, \mc{X}', \mc{L}(U, \mc{X}'), \alpha, \beta) \in \mc{C}, \\
& (x, \theta) \in (\X \cap \Z^E) \times \R^{\Om}_+.
\end{aligned}
\end{equation}

Next, we prove that the feasible region~$\mc{F}'$ of Formulation~\ref{formulation:ILS} is a relaxation of~$\F{\mc{Q}, \mc{X}, \Omega}$, and we characterize when the projection of~$\mc{F}'$ onto the~$(x, \rho)$-space is equivalent to the epigraph~$\epiQ{\mc{Q}, \X}$.

\begin{theorem}
    \label{theorem:ils}
    Let~$\mc{C}$ be a family of ILS cuts and let~$\mc{F}'$ be the feasible region of Formulation~\ref{formulation:ILS}. Then the following holds:
    \begin{enumerate}[(i)]
        \item $\F{\mc{Q}, \mc{X}, \Omega} \subseteq \mc{F}'$; and \label{item:ils_relaxation}
        \item $\epiQ{\mc{Q}, \X} = \projrho (\mc{F}')$ if and only if~$\allones^\T \bar{\theta} \geq \mc{Q}(\bar{x})$, for all~$(\bar{x}, \bar{\theta}) \in \mc{F}'$. \label{item:ils_projection}
    \end{enumerate}
\end{theorem}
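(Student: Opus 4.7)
The plan is to handle the two items separately, relying primarily on definitional unpacking: part (i) reduces to verifying each ILS cut via a simple case split on whether $x \in \mc{X}'$, and part (ii) combines part (i) with Claim~\ref{claim:rewrite_epigraph} for the nontrivial direction, while the other direction is essentially the defining inequality of $\epiQ{\mc{Q}, \X}$ read off a projection.

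For part (i), I would fix an arbitrary $(x, \theta) \in \F{\mc{Q}, \mc{X}, \Omega}$ and any tuple $(U, \mc{X}', \mc{L}(U, \mc{X}'), \alpha, \beta) \in \mc{C}$ and show that the corresponding inequality $\theta(U) \geq \mc{L}(U, \mc{X}') \cdot (\alpha^\T x + \beta)$ holds. Since $x \in \X \cap \Z^E$, Definition~\ref{def:activation_function} gives two cases. If $x \in \mc{X}'$, then $\alpha^\T x + \beta = 1$, and by the defining inequalities of $\F{\mc{Q}, \mc{X}, \Omega}$ together with Definition~\ref{def:recourse_lower_bound}, $\theta(U) = \sum_{\omega \in U} \theta_\omega \geq \sum_{\omega \in U} \sum_{R \in \mc{R}(x)} \mc{Q}(R, \omega) \geq \mc{L}(U, \mc{X}')$. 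If $x \notin \mc{X}'$, then $\alpha^\T x + \beta \leq 0$, so $\mc{L}(U, \mc{X}') \cdot (\alpha^\T x + \beta) \leq 0 \leq \theta(U)$, using $\mc{L}(U, \mc{X}') \geq 0$ and $\theta \in \R^{\Om}_+$.

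For the forward direction of part (ii), I would take any $(\bar{x}, \bar{\theta}) \in \mc{F}'$, observe that $(\bar{x}, \allones^\T \bar{\theta}) \in \projrho(\mc{F}') = \epiQ{\mc{Q}, \X}$, and conclude $\allones^\T \bar{\theta} \geq \mc{Q}(\bar{x})$ directly from the definition of the epigraph. For the backward direction, I would prove both inclusions. The inclusion $\epiQ{\mc{Q}, \X} \subseteq \projrho(\mc{F}')$ follows by applying Claim~\ref{claim:rewrite_epigraph} to lift any $(x, \rho) \in \epiQ{\mc{Q}, \X}$ to some $(x, \theta) \in \F{\mc{Q}, \mc{X}, \Omega}$ with $\rho = \allones^\T \theta$, and then invoking part (i) to get $(x, \theta) \in \mc{F}'$. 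The reverse inclusion $\projrho(\mc{F}') \subseteq \epiQ{\mc{Q}, \X}$ uses the hypothesis: any $(x, \rho) \in \projrho(\mc{F}')$ comes from some $(x, \theta) \in \mc{F}'$ with $\rho = \allones^\T \theta \geq \mc{Q}(x)$, and since $x \in \X \cap \Z^E$ by the feasibility constraints of~\ref{formulation:ILS}, this membership suffices.

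I do not foresee a real obstacle here; the statement is a structural organization of the meaning of ILS cuts, and the only point requiring a moment of attention is remembering to invoke nonnegativity of $\theta$ and $\mc{L}(U, \mc{X}')$ in the $x \notin \mc{X}'$ case of (i), and to use Claim~\ref{claim:rewrite_epigraph} for the lifting step in (ii).
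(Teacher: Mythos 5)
Your proof is correct and follows essentially the same route as the paper: item (i) is exactly the content of Claim~\ref{claim:valid_lower_bound} combined with the definition of a recourse lower bound (the paper simply cites that claim rather than redoing the case split on $x \in \mc{X}'$), and item (ii) uses Claim~\ref{claim:rewrite_epigraph} plus item (i) for the inclusion $\epiQ{\mc{Q},\X} \subseteq \projrho(\mc{F}')$ and the hypothesis for the reverse inclusion, just as the paper does.
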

\begin{proof}
    Item~\ref{item:ils_relaxation} follows from Claim~\ref{claim:valid_lower_bound} and the definitions
    of ILS cuts. To prove item~\ref{item:ils_projection}, we write
    \begin{equation*}
        \epiQ{\mc{Q}, \X} \overset{\text{Claim~\ref{claim:rewrite_epigraph}}}{=} \projrho (\F{\mc{Q}, \mc{X}, \Omega}) \overset{\text{Item~\ref{item:ils_relaxation}}}{\subseteq} \projrho (\mc{F}').
    \end{equation*}
    Hence, for any~$(\bar{x}, \bar{\theta}) \in \mc{F}'$, we have that~$(\bar{x}, \allones^\T \bar{\theta}) \in \epiQ{\mc{Q}, \X}$ if and only if~$\allones^\T \bar{\theta} \geq \mc{Q}(\bar{x})$, as desired.
\end{proof}

Observe that since problem~\ref{problem:vrpr} can be written as a minimization problem over~$\epiQ{\mc{Q}, \X}$, there is no need to guarantee that a candidate solution~$(\bar{x}, \bar{\theta}) \in (\X \cap \Z^E) \times \R^{V_+}_+$ belongs to~$\F{\mc{Q}, \mc{X}, \Omega}$. Instead, it suffices to check whether the projection of~$(\bar{x}, \bar{\theta})$ under~$\projrho(\,\cdot\,)$ lies inside~$\epiQ{\mc{Q}, \X}$, which can be easily verified using item~\ref{item:ils_projection} of Theorem~\ref{theorem:ils}. 

We also mention in passing that
in the original ILS paper,~\cite{LAPORTE1993133} named as \emph{valid optimality cuts} a family of inequalities in the~$(x, \rho)$-space that implies~$\rho \geq \mc{Q}(x)$, for all~$x \in \X \cap \Z^E$ (adapting the notation to our context). This is similar to our statement in item~\ref{item:ils_projection}, except that here the family of ILS cuts~$\mc{C}$ is defined in the extended space of the~$(x, \theta)$-variables.

\subsection{Applying the framework}
\label{subsection:applying}

The first step in applying our framework is to choose~$\Om$ and a corresponding recourse disaggregation. Recall that~$\Om(R)$ denotes the support of the disaggregation of~$\mc{Q}$ with respect to route~$R$. Based on the approaches reviewed in Section~\ref{section:review}, a natural choice here is as follows (a formal justification is later provided in Theorem~\ref{thm:route_formulation}).
\begin{assumption}
    \label{assumption:customer_disaggregation}
    The disaggregation of~$\mc{Q}$ is along~$\Om = V_+$ and, for every route~$R$,~$\Om(R) \subseteq V_+(R)$.
\end{assumption}

Let~$\mc{C}$ be a family of inequalities of the form~$\theta(U) \geq \L(U, \mc{X}') \cdot \W(x ; \mc{X}')$ (with~$\L(U, \mc{X}') \in \Q_+$ and~$\W(x ; \mc{X}')$ affine in~$x$). To express problem~\ref{problem:vrpr} as an MILP in the form of Formulation~\ref{formulation:ILS}, we need to show that:
\begin{enumerate}[label=(\arabic*)]
    \item every inequality~$\theta(U) \geq \L(U, \mc{X}') \cdot \W(x ; \mc{X}')$ in the family~$\mc{C}$ is an ILS cut, that is, \label{item:framework_a}
    \begin{enumerate}[label=(\arabic{enumi}\alph*)]
        \item $\W(x ; \mc{X}')$ is an activation function (Definition~\ref{def:activation_function}); and \label{item:framework_a1}
        \item $\L(U, \mc{X}')$ is a recourse lower bound (Definition~\ref{def:recourse_lower_bound});\label{item:framework_a2}
    \end{enumerate}
    \item every~$(\bar{x}, \bar{\theta})$ feasible for Formulation~\ref{formulation:ILS} satisfies~$\allones^\T \bar{\theta} \geq \mc{Q}(\bar{x})$. \label{item:framework_b}
\end{enumerate}

In Section~\ref{section:generalizing}, we revisit the partial-route-based Formulation~\eqref{formulation:partial_route} and formally establish its validity for a general recourse function~$\mc{Q}$. We also generalize the partial route inequalities using the notion of recourse disaggregation introduced here. The resulting approach relies on trivially valid recourse lower bounds (such as~$\mc{Q}(R)$ or~$\mc{Q}(\bar{x})$), which ensure the above item~\ref{item:framework_b}. Although there may be exponentially many such inequalities, they can be separated efficiently for integer vectors~$\bar{x} \in \X \cap \Z^E$, yielding a branch-and-cut algorithm without additional assumptions on~$\mc{Q}$ or~$\X$.

However, the described algorithm may converge slowly. To improve performance, in Section~\ref{section:generalizing} we also generalize the set cuts from the DL-shaped method using the concept of recourse disaggregation. Section~\ref{section:application_vrpsd} then derives \mbox{problem-specific} recourse lower bounds (item~\ref{item:framework_a2}) for the partial route and set cuts introduced in Section~\ref{section:generalizing}.

\section{Characterizing disaggregations and generalizing existing ILS cuts}
\label{section:generalizing}

In this section, we show that our framework unifies and generalizes several existing VRPSD formulations~\citep{gendreau95, laporte2002, hoogendoorn2023improved, parada2024disaggregated, legault2025superadditivity}. Specifically, Section~\ref{subsection:gendreau} shows how the well-known nondisaggregated formulation of~\cite{gendreau95} fit within our framework. Section~\ref{subsection:route_partial_route} revisits the partial route inequalities of~\cite{hoogendoorn2023improved}, and proves that a formulation based on the so-called route cuts yields a valid reformulation of problem~\ref{problem:vrpr} if and only if the disaggregation of~$\mc{Q}$ is \emph{route-disjoint} (Theorem~\ref{thm:route_formulation}).

Lastly, Section~\ref{subsection:parada_cuts} builds on our framework to generalize and extend the DL-shaped method~\cite{parada2024disaggregated, legault2025superadditivity} in two important ways. First, we show that the path cuts are valid if and only if the recourse function~$\mc{Q}$ is \emph{restrictively superadditive} (Theorem~\ref{thm:supperadditive}). Second, we leverage the definition of recourse lower bounds (Definition~\ref{def:recourse_lower_bound}) to generalize the set cuts of the DL-shaped method, allowing these inequalities to be applied even when the recourse function~$\mc{Q}$ is not restrictively superadditive.

\subsection{The ILS cuts of Gendreau et al. (1995)}
\label{subsection:gendreau}

As a first application of Theorem~\ref{theorem:ils}, we consider the classic nondisaggregated formulation of~\cite{gendreau95}. In their setup, feasible routing plans use exactly~$k \in \Z_{++}$ vehicles, that is,~$\X \subseteq \{x \in \R^E : x(\delta(0)) = 2k\}$. Let~$\bar{x}$ be an arbitrary point in~$\X \cap \Z^E$ and recall that~$G(\bar{x})$ is the support graph associated with~$\bar{x}$. Their activation function is defined as~$W^k_G(x ; \{\bar{x}\}) \coloneqq 1 + x(E(G(\bar{x})) \setminus \delta(0)) - |V_+| + k$\label{sym:Wg} and the corresponding ILS cut is
\begin{equation}
    \label{ineq:gendreau_cut}
    \allones^\T \theta \geq \mc{Q}(\bar{x}) \cdot W^k_G(x ; \{\bar{x}\}).
\end{equation} 

It is easy to see that~$\Wg(x ; \{\bar{x}\})$ is indeed an activation function. The next result is an immediate consequence of Theorem~\ref{theorem:ils} and summarizes their formulation. (To recover the exact same formulation as theirs, one applies Corollary~\ref{corollary:gendreau} with~$\Om$ being a singleton.)

\begin{corollary}
    \label{corollary:gendreau}
    Let~$k \in \Z_{++}$ and suppose that~$\X \subseteq \{x \in \R^E : x(\delta(0)) = 2k\}$. Let~$$\mc{C}^k_G = \{(\Om, \{\bar{x}\}, \mc{Q}(\bar{x}), \alpha^{\bar{x}}, \beta^{\bar{x}})\}_{\bar{x} \in \mc{X} \cap \Z^E}$$ be a family of ILS cuts, where, for every~$\bar{x} \in \X \cap \Z^E$,~$\Wg(x ; \{\bar{x}\}) = (\alpha^{\bar{x}})^\T x + \beta^{\bar{x}}$. Let~$\mc{F}^k_G$ be the feasible region of formulation~\hyperref[formulation:ILS]{ILS($\mc{X}, \mc{C}^k_G, \Omega$)}. Then~$\epiQ{\mc{Q}, \X} = \projrho(\mc{F}^k_G)$.~\hfill\Halmos
\end{corollary}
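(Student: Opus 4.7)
The plan is to invoke Theorem~\ref{theorem:ils}. Item~(\ref{item:ils_relaxation}) of that theorem already delivers the inclusion $\epiQ{\mc{Q},\mc{X}} \subseteq \projrho(\mc{F}^k_G)$ via Claim~\ref{claim:rewrite_epigraph}, provided $\mc{C}^k_G$ is a bona fide family of ILS cuts in the sense of Definition~\ref{def:family_ils_cuts}. So my first task is to verify, for each $\bar x \in \X\cap\Z^E$, that (a) $\mc{Q}(\bar x)$ is a recourse lower bound for $U=\Omega$ and $\mc{X}'=\{\bar x\}$, and (b) $\Wg(x;\{\bar x\})$ is an activation function. Part (a) is immediate: by Definition~\ref{def:recourse_disaggregation},
\[
\sum_{\omega \in \Omega} \sum_{R \in \mc{R}(\bar x)} \mc{Q}(R,\omega) \;=\; \sum_{R\in\mc{R}(\bar x)} \mc{Q}(R) \;=\; \mc{Q}(\bar x),
\]
so Definition~\ref{def:recourse_lower_bound} is satisfied with equality. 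Part (b) requires me to check that $\Wg(x;\{\bar x\})$ is affine (clear from~\eqref{eq:gendreau_activation_function}), equals $1$ at $x=\bar x$, and is $\le 0$ at any other $x\in\X\cap\Z^E$.

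The heart of (b) is a combinatorial counting argument that uses the assumption $\X\subseteq\{x : x(\delta(0))=2k\}$. For any $x\in\X\cap\Z^E$, the routing plan $\mc{R}(x)$ consists of exactly $k$ routes, and a direct edge count (each route on $\ell$ customers contributes $\ell-1$ non-depot edges if $\ell\ge 2$, and $0$ if $\ell=1$, while contributing two depot-incident edges either way) gives $x(E\setminus\delta(0))=|V_+|-k$. Applying this at $\bar x$ shows that $\bar x(E(G(\bar x))\setminus\delta(0))=|V_+|-k$, so $\Wg(\bar x;\{\bar x\})=1$. For $x\ne\bar x$, I need $x(E(G(\bar x))\setminus\delta(0))\le|V_+|-k-1$. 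The key observation is that the non-depot edges of a $k$-route integer routing plan determine the plan uniquely: they form vertex-disjoint paths on $V_+$, and the depot-incident edges are forced to connect the two endpoints of each path (or to duplicate on a singleton). Hence if $x\ne\bar x$, their non-depot edge sets differ, and since both have cardinality $|V_+|-k$, at least one non-depot edge of $x$ lies outside $E(G(\bar x))$; this yields the required strict drop and gives $\Wg(x;\{\bar x\})\le 0$. This combinatorial step is the only place that the hypothesis $x(\delta(0))=2k$ is actually used, and it is the main obstacle — the rest is bookkeeping.

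With $\mc{C}^k_G$ established as a family of ILS cuts, all that remains is the converse inclusion in item~(\ref{item:ils_projection}) of Theorem~\ref{theorem:ils}: showing $\allones^\T\bar\theta\ge\mc{Q}(\bar x)$ for every $(\bar x,\bar\theta)\in\mc{F}^k_G$. This is essentially tautological under our construction. Fix any such $(\bar x,\bar\theta)$, and apply the single cut indexed by $\bar x$ itself, with $U=\Omega$ and $\mc{X}'=\{\bar x\}$: since $\Wg(\bar x;\{\bar x\})=1$ as shown above, the cut evaluates to $\allones^\T\bar\theta=\theta(\Omega)\ge \mc{Q}(\bar x)\cdot 1=\mc{Q}(\bar x)$. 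Theorem~\ref{theorem:ils}(\ref{item:ils_projection}) then yields $\epiQ{\mc{Q},\X}=\projrho(\mc{F}^k_G)$, completing the proof.
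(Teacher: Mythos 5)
Your proposal is correct and follows the same route the paper takes: the paper proves this corollary simply by asserting that $\Wg(x;\{\bar{x}\})$ is an activation function ("easy to check") and invoking Theorem~\ref{theorem:ils}, with item~(ii) discharged exactly as you do — the cut indexed by $\bar{x}$ itself gives $\allones^\T\bar{\theta}=\theta(\Omega)\geq\mc{Q}(\bar{x})$. You additionally fill in the edge-counting verification that the paper leaves implicit, and that verification is sound.
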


Several ideas have been proposed to improve the basic ILS approach of~\cite{gendreau95}. One such idea, first explored by~\cite{laporte2002}, derives \emph{globally valid lower bounds} that enable a strengthening the coefficients in the ILS cuts (see Appendix~\ref{appendix:laporte} for how this fits into our framework). As discussed in Section~\ref{section:review}, a more promising direction is to instead consider approaches that disaggregate the recourse variable~$\rho$ (i.e.,~$|\Om| > 1$). In this sense, the remainder of this section generalizes the ILS cuts presented in Section~\ref{section:review} using the concepts introduced by our framework.

\subsection{Route and partial route cuts}
\label{subsection:route_partial_route}

Let~$R$ be a route. Recall that~$\Om(R)$ is the support of route~$R$ (with respect to the fixed disaggregation of~$\mc{Q}$) and that~$\mc{X}_{=}(R)$ is the set of solutions containing route~$R$. We define an \emph{ILS route cut} (or simply \emph{route cut}) as an inequality of the form
\begin{equation}
    \label{ineq:route_cut}
    \theta(\Om(R)) \geq \mc{Q}(R) \cdot \W(x ; \hyperlink{set:x_r}{\mc{X}_{=}(R)}),
\end{equation}
where~$\W(x ; \hyperlink{set:x_r}{\mc{X}_{=}(R)})$ is an activation function. 

For example, suppose that we use the activation function~$\Whs$ of~\cite{hoogendoorn2023improved} and set~$\Om = V_+$. If the fixed disaggregation of~$\mc{Q}$ is such that, for each route~$R$, there exists~$v_R \in V_+(R)$ with~$\mc{Q}(R) = \mc{Q}(R, v_R)$, then inequality~\eqref{ineq:route_cut} reduces to their route-split inequality~$\theta_{v_R} \geq \mc{Q}(R) \cdot \Whs(x ; \Xh{R})$.

One may easily verify that the route cuts~\eqref{ineq:route_cut} are indeed ILS cuts (Definition~\ref{def:ils_cut}). That is, for every~$\bar{x} \in \hyperlink{set:x_r}{\mc{X}_{=}(R)}$, we have that~$\sum_{R' \in \mc{R}(\bar{x})} \sum_{\omega \in \Om(R)} \mc{Q}(R', \omega) \geq \sum_{\omega \in \Om(R)} \mc{Q}(R, \omega) = \mc{Q}(R)$, which implies that~$\mc{Q}(R)$ is a recourse lower bound with respect to~$\Om(R)$ and~$\hyperlink{set:x_r}{\mc{X}_{=}(R)}$. Therefore, route cuts yield a relaxation of the feasible region~$\F{\mc{Q}, \X, \Om}$ (item~\ref{item:ils_relaxation} of Theorem~\ref{theorem:ils}).

It turns out that the following property exactly determines whether route cuts satisfy item~\ref{item:ils_projection} of Theorem~\ref{theorem:ils}, meaning that they can be used to reformulate the feasible region of~\ref{problem:vrpr}. This result justifies the disaggregation in Assumption~\ref{assumption:customer_disaggregation}. Moreover, as observed in Section~\ref{subsection:review_hoogendoorn}, this also formally establishes the correctness of Formulation~\eqref{formulation:partial_route} for any choice of~$\mc{Q}$ and~$\X$.

\begin{definition}
    \label{def:route_disjoint}
    The disaggregation of~$\mc{Q}$ is \emph{route-disjoint} if, for every~$\bar{x} \in \X \cap \Z^E$ and distinct routes~$R, R' \in \mc{R}(\bar{x})$, it holds that~$\Omega(R) \cap \Omega(R') = \emptyset$.
\end{definition}

\begin{restatable}{theorem}{thmrouteformulation}
    \label{thm:route_formulation}
    Consider the family of ILS cuts
    $$\mc{C}_{R} = \{(\Om(R), \hyperlink{set:x_r}{\mc{X}_{=}(R)}, \mc{Q}(R), \alpha^R, \beta^R) : \text{$R$ is a route}\},$$
    where, for every route~$R$,~$\W(x ; \hyperlink{set:x_r}{\mc{X}_{=}(R)}) = (\alpha^{R})^\T x + \beta^{R}$ is an activation function. Let~$\mc{F}_R$ be the feasible region of~\hyperref[formulation:ILS]{ILS($\mc{X}, \mc{C}_{R}, \Omega$)}. Then~$\epiQ{\mc{Q}, \mc{X}} = \projrho(\mc{F}_R)$ if and only if the disaggregation of~$\mc{Q}$ is route-disjoint.
\end{restatable}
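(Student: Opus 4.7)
My plan is to invoke item~(ii) of Theorem~\ref{theorem:ils}, which reduces the claim to the statement that $\allones^\T \bar\theta \geq \mc{Q}(\bar{x})$ holds for every $(\bar{x}, \bar\theta) \in \mc{F}_R$ if and only if the disaggregation of~$\mc{Q}$ is route-disjoint. The key simplification available to both directions is that for $(\bar{x}, \bar\theta) \in \mc{F}_R$ and any $R \in \mc{R}(\bar{x})$, one has $\bar{x} \in \mc{X}(R)$, so the activation function evaluates to~$1$ and the cut in~$\mc{C}_R$ collapses to $\bar\theta(\Omega(R)) \geq \mc{Q}(R)$; cuts for routes $R \notin \mc{R}(\bar{x})$ are automatically satisfied as soon as $\bar\theta \geq 0$, since their activation function is $\leq 0$.

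For the ``if'' direction, I would fix an arbitrary $(\bar{x}, \bar\theta) \in \mc{F}_R$ and exploit the assumed pairwise disjointness of $\{\Omega(R)\}_{R \in \mc{R}(\bar{x})}$ together with $\bar\theta \geq 0$ to sum the simplified cuts over $\mc{R}(\bar{x})$, obtaining $\allones^\T \bar\theta \geq \sum_{R \in \mc{R}(\bar{x})} \bar\theta(\Omega(R)) \geq \sum_{R \in \mc{R}(\bar{x})} \mc{Q}(R) = \mc{Q}(\bar{x})$, as required.

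For the ``only if'' direction, I would argue by contrapositive and produce a violating feasible point. Choose $\bar{x} \in \X \cap \Z^E$, distinct routes $R_1, R_2 \in \mc{R}(\bar{x})$, and $\omega^* \in \Omega(R_1) \cap \Omega(R_2)$; the definition of $\Omega(R_i)$ forces $\mc{Q}(R_1, \omega^*) > 0$ and $\mc{Q}(R_2, \omega^*) > 0$. Take as a starting point $\bar\theta_\omega \coloneqq \sum_{R \in \mc{R}(\bar{x})} \mc{Q}(R, \omega)$ for $\omega \in \Omega$: this gives $\allones^\T \bar\theta = \mc{Q}(\bar{x})$ and satisfies each simplified cut $\bar\theta(\Omega(R)) \geq \mc{Q}(R)$ with slack $\sum_{R' \in \mc{R}(\bar{x}) \setminus \{R\}} \sum_{\omega \in \Omega(R)} \mc{Q}(R', \omega)$. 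For any $R \in \mc{R}(\bar{x})$ with $\omega^* \in \Omega(R)$, this slack is strictly positive --- at least $\mc{Q}(R_2, \omega^*) > 0$ when $R = R_1$, and at least $\mc{Q}(R_1, \omega^*) > 0$ otherwise. Decreasing $\bar\theta_{\omega^*}$ by a sufficiently small $\delta > 0$ (upper bounded by the minimum of these finitely many positive slacks) yields a new point still in $\mc{F}_R$ with objective value strictly below $\mc{Q}(\bar{x})$, and Theorem~\ref{theorem:ils}(ii) then rules out the equality.

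The main obstacle is the bookkeeping that validates this perturbation: one must range over all routes $R \in \mc{R}(\bar{x})$ whose $\Omega(R)$ contains $\omega^*$ and confirm that every associated cut has uniformly positive slack before the decrement, which ultimately reduces to the strict positivity of $\mc{Q}(R_1, \omega^*)$ and $\mc{Q}(R_2, \omega^*)$ inherited from $\omega^* \in \Omega(R_1) \cap \Omega(R_2)$. Cuts for $R \in \mc{R}(\bar{x})$ with $\omega^* \notin \Omega(R)$ are untouched, and cuts for $R \notin \mc{R}(\bar{x})$ remain trivially satisfied since $\bar\theta_{\omega^*} - \delta$ stays nonnegative.
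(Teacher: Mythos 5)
Your proposal is correct, and its skeleton matches the paper's proof exactly: both directions reduce to item~(ii) of Theorem~\ref{theorem:ils}, and the ``if'' direction is word-for-word the paper's argument (sum the cuts $\bar\theta(\Omega(R)) \geq \mc{Q}(R)$ over $R \in \mc{R}(\bar{x})$, using disjointness of the supports and $\bar\theta \geq 0$).

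The only place you diverge is the witness for the converse. The paper sets $\hat{\theta}_\omega = \max_{R \in \mc{R}(\hat{x})} \mc{Q}(R, \omega)$ in one shot: this point satisfies every cut because each coordinate dominates each route's contribution, and the failure of route-disjointness at $\omega'$ forces $\hat{\theta}_{\omega'}$ strictly below $\sum_{R} \mc{Q}(R, \omega')$, hence $\allones^\T \hat{\theta} < \mc{Q}(\hat{x})$ with no further checking. You instead start from $\bar\theta_\omega = \sum_{R} \mc{Q}(R, \omega)$ and perturb downward at $\omega^*$ by a small $\delta$, which requires verifying that every cut touching $\omega^*$ has uniformly positive slack before the decrement. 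Your slack computation is right --- the slack of the cut for $R$ equals $\sum_{\omega \in \Omega(R)} \sum_{R' \neq R} \mc{Q}(R', \omega)$, which is bounded below by $\mc{Q}(R_2, \omega^*)$ or $\mc{Q}(R_1, \omega^*)$ as you say --- so the argument goes through; the max construction simply buys you the conclusion without the $\delta$ bookkeeping. Both witnesses are valid, and the theorem is proved either way.
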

\begin{proof}
Suppose first that the disaggregation of~$\mc{Q}$ is route-disjoint and let~$(\bar{x}, \bar{\theta}) \in \mc{F}_R$. Consider an ILS route cut~$\theta(\Om(R)) \geq \mc{Q}(R) \cdot \W(x ; \hyperlink{set:x_r}{\mc{X}_{=}(R)})$ associated with a route~$R \in \mc{R}(\bar{x})$. Since~$\W(\bar{x} ; \hyperlink{set:x_r}{\mc{X}_{=}(R)}) = 1$, we know that~$\bar{\theta}(\Om(R)) \geq \mc{Q}(R)$. Hence, by route-disjointness,~$\allones^\T \bar{\theta} \geq \sum_{R \in \mc{R}(\bar{x})} \bar{\theta}(\Om(R)) \geq \sum_{R \in \mc{R}(\bar{x})} \mc{Q}(R) = \mc{Q}(\bar{x})$. Item~\ref{item:ils_projection} of Theorem~\ref{theorem:ils} then gives~$\epiQ{\mc{Q}, \mc{X}} = \projrho(\mc{F}_R)$.

Assume now that the disaggregation of~$\mc{Q}$ is not route-disjoint. In this case, there exists~$\hat{x} \in \X \cap \Z^E$ such that, for some~$\omega' \in \Om$, we have~$|\{R \in \mc{R}(\hat{x}) : \omega' \in \Om(R)\}| \geq 2$. For each~$\omega \in \Om$, set~$\hat{\theta}_\omega = \max_{R \in \mc{R}(\hat{x})}\{ \mc{Q}(R, \omega)\}$. Since~$\hat{\theta}_\omega \leq \sum_{R \in \mc{R}(\hat{x})} \mc{Q}(R, \omega)$, for all~$\omega \in \Om$, and~$\hat{\theta}_{\omega'} < \sum_{R \in \mc{R}(\hat{x})} \mc{Q}(R, \omega')$, we know that~$\allones^\T \hat{\theta} < \mc{Q}(\hat{x})$. Therefore, by item~\ref{item:ils_projection} of Theorem~\ref{theorem:ils}, it suffices to show that~$(\hat{x}, \hat{\theta})$ belongs to~$\mc{F}_R$. To do this, take an arbitrary route~$R'$. If~$R' \in \mc{R}(\hat{x})$, then~$(\alpha^{R'})^\T \hat{x} + \beta^{R'} = 1$ and~$\mc{Q}(R') \cdot ((\alpha^{R'})^\T \hat{x} + \beta^{R'}) = \sum_{\omega \in \Omega} \mc{Q}(R', \omega) \leq \sum_{\omega \in \Omega} \hat{\theta}_\omega$, where the last inequality follows from how we set~$\hat{\theta}$. Otherwise,~$(\alpha^{R'})^\T \hat{x} + \beta^{R'} \leq 0$ and we are done.
\end{proof}

To strengthen the relaxations that arise when solving Formulation~\hyperref[formulation:ILS]{ILS($\mc{X}, \mc{C}_{R}, \Omega$)}, and following the development of~\cite{hoogendoorn2023improved}, we also generalize the partial route-split inequalities reviewed in Section~\ref{subsection:review_hoogendoorn}. Given a partial route~$\H$, a set~$U \subseteq \Om$, a recourse lower bound~$\L(U, \Xh{H})$ and an activation function~$\W(x ; \Xh{H})$, we define a \emph{partial route cut} as an inequality of the form
\begin{equation}
    \label{ineq:partial_route_cut}
    \theta(U) \geq \L(U, \Xh{H}) \cdot \W(x ; \Xh{H}).
\end{equation}
\noindent
It is clear that~\eqref{ineq:partial_route_cut} generalizes the partial route-split inequalities~\eqref{partial_route:cuts} (and the route cuts~\eqref{ineq:route_cut}). 

In this work, we shall focus on the following special case of~\eqref{ineq:partial_route_cut}. Suppose that~$\Om = V_+$,~$U = V_+(\H)$ and~$\mc{L}(\H)$ is lower bound on~$\mc{Q}(R)$, for every route~$R$ that adheres to~$\H$. Then inequality~\eqref{ineq:partial_route_cut} reduces to~$\theta(V_+(\H)) \geq \mc{L}(\H) \cdot \W(x ; \Xh{H})$.

\subsection{Inequalities from the DL-shaped method}
\label{subsection:parada_cuts}

Recall from Section~\ref{subsection:review_dl-shaped} that the DL-shaped method is a branch-and-cut algorithm that is fundamentally based on two new classes of ILS cuts: the \emph{path cuts} and the \emph{set cuts}. As observed in Section~\ref{section:intro},~\cite{legault2025superadditivity} have recently shown that, when an unlimited number of vehicles is available, the \mbox{DL-shaped method} is valid if and only if the recourse function~$\mc{Q}$ is superadditive. 

Building on our framework, we obtain in Section~\ref{subsection:path_cuts} a weaker variant of superadditivity, named \emph{restricted superadditivity}, that characterizes the validity of path cuts without making any additional assumptions on the set of feasible routing plans (see Contribution~\ref{cont:rsuper}). Section~\ref{subsection:correctness_path_cuts} then proves that, when~$\mc{Q}$ is restrictively superadditive, the path cuts are sufficient to reformulate problem~\ref{problem:vrpr}. Rather than proving this directly (as in~\citep{parada2024disaggregated, legault2025superadditivity}), we obtain this result by showing a simple dominance relationship between the linear programming (LP) relaxations associated with the path cuts and the inequalities of~\cite{gendreau95}. Lastly, Section~\ref{subsection:monotonicity_lower_bounds} highlights that if set cuts are expressed using the concept of recourse lower bounds introduced in Definition~\ref{def:recourse_lower_bound} (rather than using the lower bound~$\mc{L}_{DL}(S)$ discussed in Section~\ref{subsection:review_dl-shaped}), then they are always valid for~$\F{\mc{Q}, \mc{X}, \Omega}$, regardless if~$\mc{Q}$ is restrictively superadditive. Thus, while path cuts depend on specific properties of~$\mc{Q}$, the set cuts can be applied much more broadly. 

We assume in the remainder that the fixed disaggregation of~$\mc{Q}$ satisfies Assumption~\ref{assumption:customer_disaggregation}.

\paragraph{\textbf{Realizable routes.}}

Throughout our discussion, we repeatedly refer to the set of routes that can appear in a feasible routing plan:
\begin{definition}
    \label{def:realizable_routes}
    We say that a route~$R$ is \emph{realizable} if there exists a solution~$\bar{x} \in \X \cap \Z^E$ such that~$R \in \mc{R}(\bar{x})$. The set of all realizable routes is defined as~$\Pi \coloneqq \bigcup_{x \in \X \cap \Z^E} \mathcal{R}(x)$.
\end{definition}

We emphasize that Definition~\ref{def:realizable_routes} differs from the use of the term ``feasible route'' in the vehicle routing literature, where it commonly refers to routes satisfying given \emph{intra-route} constraints. In contrast to Definition~\ref{def:realizable_routes}, the notion of feasible routes typically ignores the \emph{inter-route} constraints, i.e., constraints involving multiple vehicles. For example,~\cite{parada2024disaggregated} and~\cite{legault2025superadditivity} define a feasible route (or path) as a sequence of customers whose total expected demand does not exceed the vehicle capacity. In this case, if the number of available vehicles is limited, short routes may satisfy all intra-route constraints but still never appear in any feasible solution, meaning that they are not realizable. On the other hand, the assumption of an unlimited number of vehicles in~\cite{legault2025superadditivity} ensures that their feasible routes are always realizable.

\subsubsection{Monotonicity, superadditivity and the path cuts}
\label{subsection:path_cuts}

Recall from Section~\ref{subsection:review_dl-shaped} that an \emph{ILS path cut} (or simply \emph{path cut}) associated with a route~$R$ is an inequality of the form
\begin{equation}
    \label{ineq:path_cut}
    \theta(V_+(R)) \geq \mc{Q}(R) \cdot \Wdl{x; \supsetXh{R}},
\end{equation}
where~$\Wdl{x; \supsetXh{R}} = 1 + \sum_{e \in E(R) \setminus \delta(0)} (x_e - 1)$. 

Applying Assumption~\ref{assumption:customer_disaggregation} and Claim~\ref{claim:valid_lower_bound} to inequality~\eqref{ineq:path_cut} we obtain the following:

\begin{definition}
    \label{def:monotonicity}
    We say that a disaggregation~$\hat{\mc{Q}}$ of~$\mc{Q}$ is \emph{monotone} if it satisfies the conditions in Assumption~\ref{assumption:customer_disaggregation} and, for every~$R \in \routes$ and~$R' \subseteq R$, we have that~$\sum_{v \in V_+(R')} \hat{\mc{Q}}(R, v) \geq \mc{Q}(R')$.
\end{definition}

\begin{fact}
    \label{fact:equivalence_monotonicity}
    Suppose Assumption~\ref{assumption:customer_disaggregation} holds. Then the fixed disaggregation of~$\mc{Q}$ is monotone if and only if, for every~$R' \subseteq R$, with~$R \in \routes$, the value~$\mc{Q}(R')$ is a recourse lower bound with respect to~$V_+(R')$ and~$\supsetXh{R'}$. 
\end{fact}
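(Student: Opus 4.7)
The plan is to unfold both sides of the equivalence into statements about sums of the form $\sum_{v \in V_+(R')} \hat{\mc{Q}}(R'', v)$ over the routes $R''$ of a feasible routing plan, and then exploit Assumption~\ref{assumption:customer_disaggregation} to collapse those sums onto the unique route that contains $V_+(R')$. The key observation that drives both directions is that, since $\Om = V_+$, $\Om(R'') \subseteq V_+(R'')$, and the customer sets $\{V_+(R'')\}_{R'' \in \mc{R}(\bar{x})}$ form a partition of $V_+$, one has $\hat{\mc{Q}}(R'', v) = 0$ whenever $v \notin V_+(R'')$. Hence for any $\bar{x} \in \X \cap \Z^E$ and any set $T \subseteq V_+$ that is contained in $V_+(R^*)$ for some (necessarily unique) $R^* \in \mc{R}(\bar{x})$,
\[
\sum_{v \in T} \sum_{R'' \in \mc{R}(\bar{x})} \hat{\mc{Q}}(R'', v) \;=\; \sum_{v \in T} \hat{\mc{Q}}(R^*, v).
\]

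For the forward direction ($\Rightarrow$), assume $\hat{\mc{Q}}$ is monotone. Pick $R' \subseteq R$ with $R \in \routes$ and let $\bar{x} \in \supsetXh{R'}$. By Definition~\ref{def:adherence}, some $R^* \in \mc{R}(\bar{x})$ adheres to $R'$ (viewed as a partial route of singletons), which by Definition~\ref{def:exact_adherence} just means $R' \subseteq R^*$; and $R^* \in \routes$ because $\bar{x} \in \X \cap \Z^E$. Applying the collapsing identity above with $T = V_+(R') \subseteq V_+(R^*)$, then monotonicity to the pair $R' \subseteq R^*$, yields
\[
\sum_{v \in V_+(R')} \sum_{R'' \in \mc{R}(\bar{x})} \hat{\mc{Q}}(R'', v) \;=\; \sum_{v \in V_+(R')} \hat{\mc{Q}}(R^*, v) \;\geq\; \mc{Q}(R'),
\]
which is exactly Definition~\ref{def:recourse_lower_bound} for $\mc{L}(V_+(R'), \supsetXh{R'}) = \mc{Q}(R')$ (note $\mc{Q}(R') \in \Q_+$ since $\mc{Q}$ is rational-valued).

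For the converse ($\Leftarrow$), fix $R \in \routes$ and $R' \subseteq R$. Since $R \in \routes$, there exists $\bar{x} \in \X \cap \Z^E$ with $R \in \mc{R}(\bar{x})$; because $R' \subseteq R$, the route $R$ adheres to $R'$, so $\bar{x} \in \supsetXh{R'}$. Applying the assumed recourse lower bound at $\bar{x}$ and then the same collapsing identity with $R^* = R$ and $T = V_+(R')$ gives
\[
\sum_{v \in V_+(R')} \hat{\mc{Q}}(R, v) \;=\; \sum_{v \in V_+(R')} \sum_{R'' \in \mc{R}(\bar{x})} \hat{\mc{Q}}(R'', v) \;\geq\; \mc{Q}(R'),
\]
which is the monotonicity condition from Definition~\ref{def:monotonicity}. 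I do not anticipate a real obstacle here: the argument is essentially a bookkeeping exercise, and the only subtlety is to verify carefully that ``$\bar{x}$ adheres to the partial route induced by $R'$'' in Definition~\ref{def:adherence} reduces, for the all-singleton partial route, to ``some $R^* \in \mc{R}(\bar{x})$ contains $R'$ as a subroute,'' so that the routes quantified in Definition~\ref{def:monotonicity} and the solutions quantified in Definition~\ref{def:recourse_lower_bound} are put in exact correspondence.
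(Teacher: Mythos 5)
Your proof is correct and matches the paper's (implicit) argument: the paper states this Fact as a direct consequence of unfolding Definition~\ref{def:recourse_lower_bound} via Claim~\ref{claim:valid_lower_bound} and using Assumption~\ref{assumption:customer_disaggregation} to collapse the double sum $\sum_{v \in V_+(R')} \sum_{R'' \in \mc{R}(\bar{x})} \hat{\mc{Q}}(R'', v)$ onto the unique route containing $V_+(R')$, which is exactly what you do in both directions. The care you take in checking that adherence to the all-singleton partial route of $R'$ coincides with containing $R'$ as a subroute is the right (and only) subtlety.
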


Definition~\ref{def:monotonicity} is similar to the inequality used in Proposition~2 of~\cite{parada2024disaggregated}, with the following important differences: (1) it explicitly accounts for the disaggregated recourse terms~$\hat{\mc{Q}}(R, v)$; (2) it is restricted to realizable routes (i.e.,~$R \in \routes$); and (3) it only considers subroutes of~$R$, rather than subsequences. (Note that~\cite{parada2024disaggregated} also define a monotonicity property, but their property specifically concerns the probability distributions associated with the VRPSD. Their Proposition~2 is then presented as a consequence of this property.) However, as mentioned in Section~\ref{section:intro},~\cite{legault2025superadditivity} have recently shown that Proposition~2 of~\cite{parada2024disaggregated} does not guarantee the validity of ILS path cuts. On the other hand, Fact~\ref{fact:equivalence_monotonicity} shows that Definition~\ref{def:monotonicity} is both necessary and sufficient for the validity of ILS path cuts with respect to~$\F{\mc{Q}, \mc{X}, \Omega}$. In fact, as we show next, Definition~\ref{def:monotonicity} is also equivalent to a weaker variant of the \emph{superadditivity property} proposed by~\cite{legault2025superadditivity} (see Section~\ref{subsection:limitation_superadditive}).

Recall that~$R_1 \oplus R_2$ denotes the route obtained by concatenating routes~$R_1$ and~$R_2$. By slightly generalizing the definition introduced by~\cite{legault2025superadditivity} to our more general setting (where not every ``feasible route'' is realizable), we define their superadditivity property as follows.
\begin{definition}
    \label{def:superadditivity}
    The recourse function~$\mc{Q}$ is \emph{superadditive} if, for every~$R \in \routes$ and~$R_1 \oplus R_2 \subseteq R$, we have that~$\mc{Q}(R_1 \oplus R_2) \geq \mc{Q}(R_1) + \mc{Q}(R_2)$.
\end{definition}

Building on the construction of~\cite{legault2025superadditivity}, we can show that whenever~$\mc{Q}$ is superadditive, a monotone disaggregation exists. Due to space constraints, the proof is left to Appendix~\ref{appendix:proof_claim_monotone}.

\begin{restatable}{claim}{claimmonotone}
    \label{claim:monotone}
    Suppose that~$\mc{Q}$ is superadditive and let~$\hat{\mc{Q}}$ be a disaggregation of~$\mc{Q}$ along~$\Om = V_+$ such that, for every route~$R = (v_1, \ldots, v_\ell) \in \routes$,
    \begin{equation*}
        \hat{\mc{Q}}(R, v_i) = 
        \begin{cases}
            \mc{Q}((v_1, \ldots, v_i)) - \mc{Q}((v_1, \ldots, v_{i - 1})), & \text{if~$i \in [\ell] \setminus \{1\}$,} \\
            \mc{Q}((v_1)), & \text{otherwise.}
        \end{cases}
    \end{equation*}
    Then~$\hat{\mc{Q}}$ is monotone.
\end{restatable}

On the other hand, the following example shows that the converse may not hold: 
\begin{example}
\label{example:superadditive}
Suppose that~$\mc{X} \cap \Z^E$ is a singleton~$\{\bar{x}\}$ with~$\mc{R}(\bar{x}) = \{(v_1, v_2, v_3), (v_4)\}$, so~$\routes = \mc{R}(\bar{x})$. Let~$R = (v_1, v_2, v_3)$ and assume that the recourse function~$\mc{Q}$ is such that~$\mc{Q}(R) = 3$,~$\mc{Q}((v_1)) = \mc{Q}((v_2)) = \mc{Q}((v_3)) = \mc{Q}((v_1, v_2)) = 1$ and~$\mc{Q}((v_2, v_3)) = \mc{Q}((v_4)) = 0$. Let~$\hat{\mc{Q}}$ be a disaggregation of~$\mc{Q}$ such that~$\hat{\mc{Q}}(R, v_1) = \hat{\mc{Q}}(R, v_2) = \hat{\mc{Q}}(R, v_3) = 1$. Note that for every subroute~$R' \subseteq R$, we have that~$\sum_{v \in V_+(R')} \hat{\mc{Q}}(R, v) \geq \mc{Q}(R')$, so~$\hat{\mc{Q}}$ is monotone. However, since~$0 = \mc{Q}((v_2, v_3)) < \mc{Q}((v_2)) + \mc{Q}((v_3)) = 2$, the recourse function~$\mc{Q}$ is not superadditive.~\hfill\Halmos
\end{example}

The situation depicted in Example~\ref{example:superadditive} does not arise in the work of~\cite{legault2025superadditivity}, as their proof of the necessity of superadditivity relies on the assumption that~$\routes$ is \emph{downward closed}, meaning that if~$R \in \routes$, then every subroute~$R' \subseteq R$ also belongs to~$\routes$. Under this assumption, Example~\ref{example:superadditive} cannot occur, since by downward closedness,~$(v_2, v_3) \subseteq R$ belongs to~$\routes$, meaning that if~$\hat{\mc{Q}}$ is a monotone disaggregation of~$\mc{Q}$, we must have~$\mc{Q}((v_2, v_3)) = \hat{\mc{Q}}((v_2, v_3), v_2) + \hat{\mc{Q}}((v_2, v_3), v_3)
\geq \mc{Q}((v_2)) + \mc{Q}((v_3))$. However, most of the instances considered in the VRPSD literature use~$\mc{X} = \Xcvrp$, in which case the number of vehicles is fixed to a constant~$k \in \Z_{++}$ and~$\routes$ may not be downward closed.

Since the monotonicity of a disaggregation of~$\mc{Q}$ is equivalent to the validity of path cuts (Fact~\ref{fact:equivalence_monotonicity}), Example~\ref{example:superadditive} shows that path cuts may be applied even if~$\mc{Q}$ is not superadditive. This naturally raises the question of whether one can characterize the conditions on the recourse function that ensure the validity of path cuts, even when~$\routes$ is not downward closed. To address this question, we introduce the following property.

\begin{definition}
    \label{def:weak_superadditivity}
    The recourse function~$\mc{Q}$ is \emph{restrictively superadditive} if, for every~$R \in \routes$, we have that~$\mc{Q}(R) \geq \mc{Q}(R_1) + \ldots + \mc{Q}(R_t)$,
    for all disjoint subroutes~$R_1, \ldots, R_t \subseteq R$, that is,~$V_+(R_i) \cap V_+(R_j) = \emptyset$, for every~$i \in [t]$ and~$j \in [i - 1]$. (Recall that~$[0] = \emptyset$.)
\end{definition}

We emphasize that restricted superadditivity only requires~$\mc{Q}(R) \geq \mc{Q}(R_1) + \ldots + \mc{Q}(R_t)$ for realizable routes~$R \in \routes$. For instance, in Example~\ref{example:superadditive}, for any choice of disjoint routes~$R_1, \ldots, R_t \subseteq R$, we have that~$\mc{Q}(R_1) + \ldots + \mc{Q}(R_t) \leq 3$, meaning that~$\mc{Q}$ is restrictively superadditive but not superadditive (as~$\mc{Q}((v_2, v_3)) < \mc{Q}((v_2)) + \mc{Q}((v_3))$). On the other hand, we can verify that superadditivity implies restricted superadditivity (see Appendix~\ref{appendix:proof_claim_restrictively_superadditve}).

\begin{restatable}{claim}{claimrestrictivelysuperadditive}
\label{claim:restrictively_superadditive}
Suppose that~$\mc{Q}$ is superadditive, then~$\mc{Q}$ is also restrictively superadditive.
\end{restatable}

The merit of Definition~\ref{def:weak_superadditivity} is shown by the following result: restricted superadditivity precisely characterizes when a monotone disaggregation of~$\mc{Q}$ exists. We defer the proof to Appendix~\ref{appendix:proof_thm_superadditive}.

\begin{restatable}{theorem}{thmsuperadditive}
    \label{thm:supperadditive}
    The following holds:
    \begin{enumerate}[(a)]
        \item if~$\hat{\mc{Q}}$ is a monotone disaggregation of~$\mc{Q}$, then~$\mc{Q}$ is restrictively superadditive; \label{item:superadditive_a}
        \item conversely, if~$\mc{Q}$ is restrictively superadditive, then there exists a monotone disaggregation of~$\mc{Q}$. \label{item:superadditive_b}
    \end{enumerate}
\end{restatable}

As an application of Theorem~\ref{thm:supperadditive}, consider problem~$\vrpr(\Xcvrp, \Qc)$. Since the set of realizable routes~$\routes$ is not necessarily downward closed, we cannot use the result of~\cite{legault2025superadditivity} to establish that path cuts are not valid for~$\F{\mc{X}, \Qc, V_+}$. Instead, Example~\ref{example:qc_not_superadditive} shows that~$\Qc$ is not restrictively superadditive, which implies that, regardless of the chosen disaggregation of~$\Qc$, path cuts cannot be safely applied.

\begin{example}
    \label{example:qc_not_superadditive}
    Consider the instance illustrated in Figure~\ref{figure:example_set_cut} and assume that the disaggregation of~$\Qc$ is according to Remark~\ref{remark:disaggregation}. Let~$R = (v_1, v_2, v_3, v_4)$ and~$R' = (v_2, v_3, v_4)$. Note that both routes are realizable, as there exists~$\bar{x}, x' \in \Xcvrp \cap \Z^E$ such that~$\mc{R}(\bar{x}) = \{R, (v_5)\}$ and~$\mc{R}(x') = \{R', (v_5, v_1)\}$. Since~$\Qc(R) = \Qc(\oa{R})$ and~$\Qc(R') = \Qc(\oa{R}')$, one can verify that~$\Qc(R) = \Qc(R, v_3) + \Qc(R, v_4) = (2 \cdot c_{0 v_3} + 2 \cdot c_{0 v_4}) / 4 = 3 / 2$ and~$\Qc(R') = \Qc(R', v_4) = (2 \cdot 2 \cdot c_{0 v_4}) / 4 = 2$. Therefore,~$3 / 2 = \Qc(R) = \sum_{v \in V_+(R')} \Qc(R, v) < \Qc(R') = 2$. This shows not only that the disaggregation of~$\Qc$ in Remark~\ref{remark:disaggregation} is not monotone (Definition~\ref{def:monotonicity}), but also that~$\Qc$ is not restrictively superadditive (Definition~\ref{def:weak_superadditivity}). Hence, by Theorem~\ref{thm:supperadditive}, there is no disaggregation of~$\Qc$ that is monotone.~\hfill\Halmos
\end{example}

To further clarify the connection between Theorem~\ref{thm:supperadditive} and the result of~\cite{legault2025superadditivity}, we close this discussion by showing that restricted superadditivity collapses to superadditivity if~$\routes$ is downward closed.
\begin{claim}
    Suppose that~$\mc{Q}$ is restrictively superadditive. If~$\routes$ is downward closed, then~$\mc{Q}$ is also superadditive. Furthermore, if~$\routes$ is not downward closed, then there exists an instance of problem~\ref{problem:vrpr} where~$\mc{Q}$ is not superadditive.
\end{claim}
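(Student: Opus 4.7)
The plan is to handle the two assertions separately. For the forward direction, assume $\mc{Q}$ is weakly superadditive and $\routes$ is downward closed. I would pick any $R \in \routes$ and any decomposition $R_1 \oplus R_2 \subseteq R$. Since $R_1 \oplus R_2$ is itself a subroute of $R$, downward closedness gives $R_1 \oplus R_2 \in \routes$. Applying weak superadditivity (Definition~\ref{def:weak_superadditivity}) to the route $R_1 \oplus R_2$ decomposed into the two disjoint subroutes $R_1$ and $R_2$ yields $\mc{Q}(R_1 \oplus R_2) \geq \mc{Q}(R_1) + \mc{Q}(R_2)$, which is exactly the superadditivity condition of Definition~\ref{def:superadditivity}.

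For the second assertion, the plan is to reuse Example~\ref{example:superadditive} as the required instance. There $\routes = \{R\}$ with $R = (v_1, v_2, v_3)$, so $\routes$ is trivially not downward closed (no proper subroute of $R$ lies in $\routes$). The failure of superadditivity is already displayed in the example, since $\mc{Q}((v_2, v_3)) = 0 < 2 = \mc{Q}((v_2)) + \mc{Q}((v_3))$. What remains is to verify that this same $\mc{Q}$ is weakly superadditive. Because weak superadditivity imposes a constraint only on routes in $\routes$, only $R$ itself needs to be checked: I would enumerate the families of pairwise-disjoint subroutes of $R$ and note that in every case the sum of the corresponding $\mc{Q}$-values is at most $3 = \mc{Q}(R)$, with the maximum $1+1+1 = 3$ attained by the singleton decomposition $\{(v_1),(v_2),(v_3)\}$.

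There is no real obstacle in either direction. The first direction is essentially a one-line argument once one sees that downward closedness lets us shift the hypothesis ``belongs to $\routes$'' from the outer route $R$ to the inner route $R_1 \oplus R_2$. The second direction is purely a finite verification on a three-customer instance, together with the routine observation that one can embed the recourse data of Example~\ref{example:superadditive} into a proper VRPR instance in the sense of Definition~\ref{def:problem_vrp} by choosing $\mc{X}$ so that $R$ is the unique feasible routing plan (for example, by taking $V_+ = \{v_1, v_2, v_3\}$, a single available vehicle, and capacity/demand data ensuring $R$ is the only feasible cycle).
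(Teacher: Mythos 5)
Your proposal is correct and follows essentially the same route as the paper: the forward direction is the same one-line argument (downward closedness moves $R_1 \oplus R_2$ into $\routes$, then apply weak superadditivity to it), and the second part points to Example~\ref{example:superadditive}, exactly as the paper does. Your explicit verification that the example is weakly superadditive is a detail the paper relegates to a "reader may verify" remark, but it is the same argument.
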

\begin{proof}
    Let~$R \in \routes$ and~$R_1 \oplus R_2 \subseteq R$. If~$\routes$ is downward closed,~$R_1 \oplus R_2 \in \routes$, so by restricted superadditivity,~$\mc{Q}(R_1 \oplus R_2) \geq \mc{Q}(R_1) + \mc{Q}(R_2)$. If~$\routes$ is not downward closed, consider Example~\ref{example:superadditive}.
\end{proof}

\subsubsection{Correctness of ILS path cuts formulations}
\label{subsection:correctness_path_cuts}

We now turn to proving the correctness of a VRPR formulation based on ILS path cuts. We start by showing that fractional solutions that satisfy the ILS path cuts also satisfy the inequalities of~\cite{gendreau95}.

\begin{proposition}
    \label{proposition:dominance_parada_gendreau}
    Suppose that~$\mc{Q}$ satisfies Assumption~\ref{assumption:customer_disaggregation}. Let~$\bar{x} \in \X \cap \Z^E$ with~$k = |\mc{R}(\bar{x})|$. Let~$(x', \theta') \in \mc{X} \times \R^{V_+}_+$ satisfy the ILS path cuts~\eqref{ineq:path_cut} with respect to the routes in~$\mc{R}(\bar{x})$. Then~$\mathbbm{1}^\T \theta' \geq \mc{Q}(\bar{x}) \cdot \Wg(x' ; \{\bar{x}\})$.
\end{proposition}
\proof
    Since the fixed disaggregation of~$\mc{Q}$ is route-disjoint and, for every~$R \in \mc{R}(\bar{x})$, we have~$\Wdl{x; \supsetXh{R}} = 1 + \sum_{e \in E(R) \setminus \delta(0)} (x'_e - 1)$ and~$\sum_{e \in E(R) \setminus \delta(0)} (x'_e - 1) \leq 0$, it follows that
    \begin{align*}
        \allones^\T \theta' & \geq \sum_{R \in \mc{R}(\bar{x})} \theta'(V_+(R))  \geq \sum_{R \in \mc{R}(\bar{x})} \left( \mc{Q}(R) \cdot \Wdl{x; \supsetXh{R}} \right) \\
        & = \sum_{R \in \mc{R}(\bar{x})} \mc{Q}(R) \cdot \left(1 + \sum_{e \in E(R) \setminus \delta(0)} (x'_e - 1) \right) = \mc{Q}(\bar{x}) + \sum_{R \in \mc{R}(\bar{x})} \mc{Q}(R) \cdot \left(\sum_{e \in E(R) \setminus \delta(0)} (x'_e - 1) \right) \\
        & \geq \mc{Q}(\bar{x}) \cdot \left(1 + \sum_{R \in \mc{R}(\bar{x})} \sum_{e \in E(R) \setminus \delta(0)} (x'_e - 1) \right)  = \mc{Q}(\bar{x}) \cdot \Wg(x' ; \{\bar{x}\}). & \tag*{\Halmos}
    \end{align*}

\noindent
It follows from Proposition~\ref{proposition:dominance_parada_gendreau} that the ILS path cuts satisfy item~\ref{item:ils_projection} of Theorem~\ref{theorem:ils}, which gives the desired correctness result:

\begin{corollary}
    \label{corollary:parada_correctness}
    Suppose that the fixed disaggregation of~$\mc{Q}$ satisfies Assumption~\ref{assumption:customer_disaggregation} and is monotone. Consider the family of ILS cuts~$$\mc{C}_P = \{(V_+(R), \mc{Q}(R), \alpha^R, \beta^R) : R \in \routes\},$$
    where~$\Wdl{x; \supsetXh{R}} = (\alpha^R)^\T x + \beta^R$, for every route~$R \in \routes$. Let~$\mc{F}_P$ be the feasible region of~\hyperref[formulation:ILS]{ILS($\mc{X}, \mc{C}_{P}, \Omega$)}. Then~$\epiQ{\mc{Q}, \mc{X}} = \projrho (\mc{F}_P)$.
\end{corollary}
\begin{proof}
    By Fact~\ref{fact:equivalence_monotonicity},~$\mc{F}_P \supseteq \F{\mc{Q}, \mc{X}, V_+}$, so item~\ref{item:ils_relaxation} of Theorem~\ref{theorem:ils} holds. To show item~\ref{item:ils_projection}, we use Proposition~\ref{proposition:dominance_parada_gendreau} to learn that any~$(\bar{x}, \bar{\theta}) \in \mc{F}_P$ satisfies~$\allones^\T \bar{\theta} \geq \mc{Q}(\bar{x}) \cdot \Wg(\bar{x} ; \{\bar{x}\}) = \mc{Q}(\bar{x})$.
\end{proof}

\subsubsection{Recourse lower bounds for ILS set cuts}
\label{subsection:monotonicity_lower_bounds}

Let~$S \subseteq V_+$ and suppose that~$k'$ is a lower bound on the number of routes that we need to attend the customers in~$S$, i.e., inequality~$x(E(S)) \leq |S| - k'$ is valid for~$\X \cap \Z^E$. Recall that~$\Xset{S, k'} = \left\{ x \in \X \cap \Z^E : x(E(S)) = |S| - k' \right\}$ and~$\Wdl{x ; \Xset{S, k'}} = 1 + (x(E(S)) - |S| + k')$. We define
an \emph{ILS set cut} (or simply \emph{set cut}) as an inequality of the form
\begin{equation}
    \label{ineq:set_cut}
    \theta(S) \geq \L(S, \Xset{S, k'}) \cdot \setWdl{x ; \mc{X}(S, k')},
\end{equation}
where~$\L(S, \Xset{S, k'})$ is a recourse lower bound.

Observe that~\eqref{ineq:set_cut} generalizes inequalities~\eqref{DL-shaped:set_cuts}, as the recourse lower bound~$\L(S, \Xset{S, k'})$ is not necessarily the same as~$\mc{L}_{DL}(S)$. Instead,~$\mc{L}_{DL}(S)$ is just one particular choice of recourse lower bound for the special case where~$\mc{Q}$ has a monotone disaggregation (which, by Theorem~\ref{thm:supperadditive}, is equivalent to stating that~$\mc{Q}$ is restrictively superadditive).

\begin{claim}
\label{claim:parada_set_cut}
Suppose that the fixed disaggregation of~$\mc{Q}$ satisfies Assumption~\ref{assumption:customer_disaggregation} and is monotone. Let~$\mc{L}_{DL}(S)$ be such that~$\mc{L}_{DL}(S) \leq \sum_{i = 1}^{k'} \mc{Q}(R_i)$, for every collection of routes~$R_1,\ldots,R_{k'}$ that are subroutes of routes in~$\routes$ and whose vertex sets~$\{V_+(R_i)\}_{i\in[k']}$ form a partition of~$S$. Then~$\mc{L}_{DL}(S)$ is a recourse lower bound with respect to~$S$ and~$\Xset{S,k'}$.
\end{claim}
\begin{proof}
    Let~$\bar{x} \in \Xset{S, k'}$ and~$\mc{R}(\bar{x}) = \{R_1, \ldots, R_k\}$. Recall that~$G(\bar{x})$ represents the support graph associated with~$\bar{x}$. The subgraph of~$G(\bar{x})$ induced by~$S$ is made of~$k'$ disjoint paths~$P_1, \ldots, P_{k'}$, each~$P_j$ corresponding to a route~$\hat{R}_j$ such that~$P_j = \hat{R}_j \setminus \{0\}$. For each route~$R_i \in \mc{R}(\bar{x})$, let~$\mc{H}_i$ be the set of all subroutes of~$R_i$ that belong to the set~$\{\hat{R}_1, \ldots, \hat{R}_{k'}\}$. Note that~$\cup_{i \in [k]} \mc{H}_i = \{\hat{R}_1, \ldots, \hat{R}_{k'}\}$ and~$\cup_{i \in [k']} V_+(\hat{R}_i) = S$. By monotonicity and the choice of~$\mc{L}_{DL}(S)$,~$$\sum_{i = 1}^k \sum_{v \in S} \mc{Q}(R_i, v) = \sum_{i = 1}^k \sum_{R' \in \mc{H}_i} \sum_{v \in V_+(R')} \mc{Q}(R_i, v) \geq \sum_{i = 1}^k \sum_{R' \in \mc{H}_i} \mc{Q}(R') = \sum_{i = 1}^{k'} \mc{Q}(\hat{R}_i) \geq \mc{L}_{DL}(S).$$
\end{proof}

The argument in Claim~\ref{claim:parada_set_cut} is limited to restrictively superadditive functions. On the other hand, the proposed framework determines exactly which recourse lower bounds are valid for a set cut (Claim~\ref{claim:valid_lower_bound}). As a result, set cuts can be used even when the recourse function is not restrictively superadditive. Section~\ref{section:application_vrpsd}, for example, applies these cuts to solve problem~$\vrpr(\Qc, \Xcvrp)$, even though~$\Qc$ is not restrictively superadditive (as shown in Example~\ref{example:qc_not_superadditive}).

\subsection{Summary}
\label{subsection:cut_summary}

Table~\ref{table:cutsummary} summarizes the results from this section. For each derived cut, column \tsc{rlb} indicates the recourse lower bound used, column \tsc{valid} refers to the conditions that the disaggregation or the set~$\X$ must satisfy for the cut to be valid for~$\F{\mc{Q}, \mc{X}, \Omega}$, and column \mbox{\tsc{formulate}} indicates the conditions under which the inequalities are sufficient to describe $\epiQ{\mc{Q}, \mc{X}}$ after projection (if such conditions are known). When \tsc{rlb} is listed as ``Generic'', it means that no recourse lower bound has been specified (except that it satisfies Definition~\ref{def:recourse_lower_bound}).

\begin{table}[H]
\setlength\extrarowheight{-4pt}
\centering
\small
\begin{tabular}{l@{\hskip 1em}c@{\hskip 1em}c@{\hskip 0.5em}c}
\toprule
\tsc{cut}  & \tsc{rlb} & \multicolumn{2}{c}{\tsc{condition}} \\ 
\cmidrule{3-4}
& & \tsc{valid} & \tsc{formulate} \\ \hline \\
\eqref{ineq:gendreau_cut} -- Gendreau et al. & $\mc{Q}(\bar{x})$ & $\X \subseteq \{x : x(\delta(0)) = 2k\}$ & $\X \subseteq \{x : x(\delta(0)) = 2k\}$ \\
\eqref{ineq:route_cut} -- Route cut & $\mc{Q}(R)$ & Any & Route disjoint (Def.~\ref{def:route_disjoint}) \\
\eqref{ineq:partial_route_cut} -- Partial route cut & Generic & Any &  - \\
\eqref{ineq:path_cut} -- Path cut & $\mc{Q}(R)$ & Monotone (Def.~\ref{def:monotonicity}) & Monotone (Def.~\ref{def:monotonicity})\\
\eqref{ineq:set_cut} -- Set cut & Generic & $\X \cap \Z^E \subseteq \{x : x(E(S)) \leq |S| - k'\}$ & - \\
\bottomrule
\end{tabular}
\caption{Summary of the cuts from Section~\ref{section:generalizing}.}
\label{table:cutsummary}
\end{table}

\section{Application to the VRPSD with scenarios under the classical recourse policy}
\label{section:application_vrpsd}

In this section, we apply our framework to the VRPSD with scenarios under the classical recourse policy, i.e., problem~$\vrpr(\Qc, \Xcvrp)$ defined in Section~\ref{subsection:vrpsd}. As before, we assume that we have a disaggregation of~$\Qc$ that satisfies Assumption~\ref{assumption:customer_disaggregation}. Following the ``recipe'' in Section~\ref{subsection:applying}, we immediately obtain an algorithm for solving~$\vrpr(\Qc, \Xcvrp)$ using either the cuts of~\cite{gendreau95} (Corollary~\ref{corollary:gendreau}) or the ILS route cuts (Theorem~\ref{thm:route_formulation}). The former approach is not expected to perform well, given the superior performance of recent algorithms that use recourse disaggregation (see the experiments in~\cite{hoogendoorn2023improved}). Therefore, our branch-and-cut algorithm captures the recourse cost using the ILS route cuts, and we concentrate here on developing recourse lower bounds for the additional ILS cuts introduced in Section~\ref{section:generalizing}. 

In fact, as the recourse function~$\Qc$ is not restrictively superadditive (Example~\ref{example:qc_not_superadditive}), the ILS path cuts~\eqref{ineq:path_cut} cannot be applied. Instead, Section~\ref{subsection:vrpsd_lower_bounds} focus on deriving recourse lower bounds for the partial route inequalities and the ILS set cuts. These bounds are then incorporated into our separation routine in Section~\ref{subsection:vrpsd_separation}.
 
\subsection{Deriving recourse lower bounds by counting failures}
\label{subsection:vrpsd_lower_bounds}

To derive recourse lower bounds for the VRPSD with scenarios under the classical recourse policy, we adopt the following strategy. We first compute a lower bound on the number of failures observed by a route while serving a given set of customers. This number is then used to construct a recourse lower bound by selecting that many customers in the set that are closest to the depot.

Based on the formula for~$\Qcscen(\oa{R})$ in Section~\ref{subsection:classical_recourse}, we introduce the following definition:
\begin{definition}
    \label{def:observe_failure}
    Let~$\oa{R} = (v_1, \ldots, v_\ell)$ be a directed route and let~$j \in [\ell]$. The \emph{number of failures observed by customer~$v_j$ in~$\oa{R}$} with respect to scenario~$\xi \in [N]$ is given by the expression~$\sum_{t = 1}^\infty \, \mb{I}\left(\sum_{i \in [j - 1]} d^\xi(v_i) \leq t C < \sum_{i \in [j]} d^\xi(v_i)\right)$.
    When the scenario is clear from context, we simply say that the previous summation gives the number of failures observed by~$v_j$ in~$\oa{R}$.
\end{definition}

Given a scenario~$\xi \in [N]$, we extend Definition~\ref{def:observe_failure} to undirected routes and solutions in~$\Xcvrp \cap \Z^E$ in a natural way: for an undirected route~$R$, the \emph{number of failures observed by~$v \in V_+(R)$} in~$R$ is the same as the number of failures observed by~$v$ in~$\oa{R}$, where~$\Qc(R) = \Qc(\oa{R})$ (ties are broken in an arbitrary but fixed way); similarly, the \emph{number of failures observed by~$v \in V_+$} in~$\bar{x} \in \Xcvrp \cap \Z^E$ is the number of failures observed by~$v$ in~$R \in \mc{R}(\bar{x})$, where~$R$ is the route in~$\mc{R}(\bar{x})$ that contains~$v$. 

We define~$\tsc{fail}(\alpha, \beta)$ as a function that counts the number of failures observed while collecting a demand of~$\beta \in \R_+$, assuming that the vehicle has already loaded a demand of~$\alpha \in \R_+$. In other words,
\begin{equation}
    \label{def:fail_function}
    \tag{$\tsc{fail}$}
    \failfunction{\alpha}{\beta} \coloneqq \sum_{t = 1}^\infty \mb{I}\left(\alpha \leq t C < \alpha + \beta \right).
\end{equation}

\noindent
Hence, given a directed route~$\oa{R} = (v_1, \ldots, v_\ell)$ and a scenario~$\xi \in [N]$, we have that~$\Qcscen(\oa{R}) = \sum_{j \in [\ell]} 2 \, c_{0 v_j} \failfunction{\sum_{i \in [j - 1]} d^\xi(v_i)}{d^\xi(v_j)}$.

To ease the notation, for every scenario~$\xi \in [N]$ and route~$R$, we abbreviate~$d^\xi(V_+(R))$ as~$d^\xi(R)$. The total number of failures observed along a route can be computed as follows.
\begin{lemma}
    \label{lemma:lower_bound_failures}
    For every route~$R = (v_1, \ldots, v_\ell)$, scenario~$\xi \in [N]$ and~$\alpha \in \R_+$, we have that~$\failfunction{\alpha}{d^\xi(R)} = \sum_{j \in [\ell]} \failfunction{\alpha + \sum_{i \in [j - 1]} d^\xi(v_i)}{d^\xi(v_j)}$.
\end{lemma}
\begin{proof}
    By the definition of~\ref{def:fail_function}($\,\cdot\,$),
    \begin{equation}
        \label{eq:proof_lemma_bound_failure}
        \sum_{j \in [\ell]} \failfunction{\alpha + \sum_{i \in [j - 1]} d^\xi(v_i)}{d^\xi(v_j)} = \sum_{j \in [\ell]} \sum_{t = 1}^\infty \mb{I}\left(\alpha + \sum_{i \in [j - 1]} d^\xi(v_i) \leq t C < \alpha + \sum_{i \in [j]} d^\xi(v_i)\right).
    \end{equation}
    Additionally, for every~$t \in \Z_{++}$,~$\sum_{j \in [\ell]} \mb{I}\left(\alpha + \sum_{i \in [j - 1]} d^\xi(v_i) \leq t C < \alpha + \sum_{i \in [j]} d^\xi(v_i)\right) \leq 1$ and equality holds if and only if~$\alpha \leq tC < \alpha + d^\xi(R)$. In this way, we rearrange the summations in~\eqref{eq:proof_lemma_bound_failure} to obtain~$\sum_{t = 1}^\infty \mb{I}\left(\alpha \leq t C < \alpha + d^\xi(R) \right) = \failfunction{\alpha}{d^\xi(R)}$.
\end{proof}

We also rewrite the expression defining~$\failfunction{\alpha}{\beta}$ to obtain a lower bound that does not depend on~$\alpha$:
\begin{restatable}{lemma}{lemmacountfailures}
    \label{lemma:count_failures}
    Let~$\alpha, \beta \in \R_+$ and define~$r(\alpha) \coloneqq \alpha - C \lfloor \alpha / C \rfloor$. Then~$\failfunction{\alpha}{\beta} = \left(\ceil[\Big]{\frac{\beta}{C}}- 1\right)^+$ whenever~$\alpha = 0$, and~$\failfunction{\alpha}{\beta} = \ceil[\Big]{\frac{r(\alpha) + \beta}{C}} -  \ceil[\Big]{\frac{r(\alpha)}{C}}$ otherwise. Consequently,~$\failfunction{\alpha}{\beta} \geq \left(\ceil[\Big]{\frac{\beta}{C}} - 1\right)^+$.
\end{restatable}
\begin{proof}
    Rewriting the definition of~$\failfunction{\alpha}{\beta}$ yields~$\failfunction{\alpha}{\beta} = \sum_{t = 1}^\infty \mb{I}(tC < \alpha + \beta) - \sum_{t = 1}^\infty \mb{I}(tC < \alpha)$, and this last expression can be further simplified using the following simple fact.
    \begin{fact}
    \begin{tightquote}
    \label{claim:proof_lemma_count_failures}
    For every~$\gamma \in \R_+$,~$\sum_{t = 1}^\infty \mb{I}(t < \gamma) = (\lceil \gamma \rceil - 1)^+$.
    \end{tightquote}
    \end{fact}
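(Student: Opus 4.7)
The plan is to prove the identity by a direct case analysis on whether $\gamma \leq 1$ or $\gamma > 1$. The left-hand side counts precisely the number of positive integers $t$ satisfying $t < \gamma$, so the goal reduces to showing that this count equals $(\lceil \gamma \rceil - 1)^+$.

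In the case $\gamma \leq 1$, no positive integer $t \geq 1$ satisfies $t < \gamma$, so the sum vanishes. On the other hand, $\lceil \gamma \rceil \leq 1$ forces $\lceil \gamma \rceil - 1 \leq 0$, and hence $(\lceil \gamma \rceil - 1)^+ = 0$ as well. In the case $\gamma > 1$, the positive integers strictly less than $\gamma$ are exactly $\{1, 2, \ldots, \lceil \gamma \rceil - 1\}$: if $\gamma \notin \Z$, this follows from $\lceil \gamma \rceil - 1 < \gamma < \lceil \gamma \rceil$, and if $\gamma \in \Z$, the integers strictly below $\gamma$ are $1, \ldots, \gamma - 1 = \lceil \gamma \rceil - 1$. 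Thus, the sum evaluates to $\lceil \gamma \rceil - 1 \geq 1$, which also equals $(\lceil \gamma \rceil - 1)^+$.

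There is no real obstacle here; the only subtlety is being careful when $\gamma$ itself is a positive integer, since then $\lceil \gamma \rceil = \gamma$ and the strict inequality $t < \gamma$ must be tracked correctly. The statement is a basic calculation and should be dispatched in just a few lines.
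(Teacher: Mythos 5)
Your proof is correct. The paper states this Fact without proof (it is asserted as a ``simple fact'' inside the proof of Lemma~\ref{lemma:count_failures}), and your case analysis on $\gamma \leq 1$ versus $\gamma > 1$, with the careful handling of integer $\gamma$, is exactly the elementary argument the authors leave implicit.
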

    \noindent
    Therefore, if~$\alpha = 0$, then~$\failfunction{\alpha}{\beta} = \sum_{t = 1}^\infty \mb{I}(tC < \alpha + \beta) = \left(\ceil[\Big]{\frac{\beta}{C}}- 1\right)^+$. Alternatively, when~$\alpha = qC + r(\alpha) > 0$, with~$q = \lfloor \alpha / C \rfloor$,
    \begin{align*}
        \failfunction{\alpha}{\beta} & = \left(\ceil[\Bigg]{\frac{\alpha + \beta}{C}} - 1\right) -  \left(\ceil[\Bigg]{\frac{\alpha}{C}} - 1 \right) = \ceil[\Bigg]{q + \frac{r(\alpha) + \beta}{C}} -  \ceil[\Bigg]{q + \frac{r(\alpha)}{C}} \\
        & = \ceil[\Bigg]{\frac{r(\alpha) + \beta}{C}} -  \ceil[\Bigg]{\frac{r(\alpha)}{C}} \geq \left(\ceil[\Bigg]{\frac{\beta}{C}} -  1\right)^+,
    \end{align*}
    where the last inequality follows from~$r(\alpha) \in [0, C)$.
\end{proof}

By Lemma~\ref{lemma:count_failures}, for every customer~$v \in V_+$ and scenario~$\xi \in [N]$, we may assume that~$d^\xi(v) \leq C$, as otherwise~$v$ observes at least~$\lceil d^\xi(v) / C \rceil - 1$ failures in any solution. So we can preprocess the instance by decrementing~$d^\xi(v)$ by~$q C$, with~$q = \lceil d^\xi(v) / C \rceil - 1$, and adding a constant term of~$q (2 c_{0v} p_\xi)$ to the objective function. This motivates the next assumption, which implies that every customer observes at most one failure (in any route and scenario~$\xi \in [N]$).
\begin{assumption}
    \label{assumption:customer_demands}
    For every scenario~$\xi \in [N]$ and customer~$v \in V_+$,~$d^\xi(v) \leq C$.
\end{assumption}

Having established Assumption~\ref{assumption:customer_demands} and the lower bound in Lemma~\ref{lemma:lower_bound_failures}, we now derive the desired recourse lower bounds. Let~$S = \{v_1, \ldots, v_\ell\} \subseteq V_+$ be such that~$c_{0v_1} \leq \cdots \leq c_{0 v_\ell}$. We define below a function, which, intuitively, represents a lower bound on the recourse cost of serving~$S$ (in scenario~$\xi \in [N]$) with~$\nu \in \Z_{++}$ vehicles and assuming that~$\alpha \in \R_+$ demand has already been collected:
\begin{equation}
    \label{def:vrpsd_lower_bound}
    \tag{$\mc{L}^\nu_\xi$}
    \mc{L}^\nu_\xi(\alpha, S) \coloneqq \sum_{j \in \left[\failfunction{\alpha}{d^\xi(S)} - \nu + 1\right]} 2 \, c_{0 v_j}.
\end{equation}

Lemma~\ref{lemma:lower_bound_failures} determines exactly the number of failures observed when traversing route~$R$ after collecting a value~$\alpha$ of demand. Combining this result with Lemma~\ref{lemma:count_failures} and Assumption~\ref{assumption:customer_demands} yields the following recourse lower bound for serving a set of customers with a given number of routes.

\begin{restatable}{proposition}{propsetlowerbound}
    \label{prop:set_recourse_lower_bound}
    Suppose that the customer demands satisfy Assumption~\ref{assumption:customer_demands} and let~$\hat{\mc{Q}}_C$ be the disaggregation of~$\Qc$ described in Remark~\ref{remark:disaggregation}. Let~$S \subseteq V_+$,~$k' \in \Z_{++}$, and define~$\mc{L}_C(S, k') \coloneqq \sum_{\xi \in [N]} p_\xi \,\vrpsdLB{k'}{0}{S}$. Then, for every~$\bar{x} \in \{x \in \X \cap \Z^E : x(E(S)) = |S| - k'\}$, we have that~$$\sum_{R \in \mc{R}(\bar{x})} \sum_{v \in S} \hat{\mc{Q}}_C(R, v) \geq \mc{L}_C(S, k').$$
    In particular, if~$x(E(S)) \leq |S| - k'$ is valid for~$\X \cap \Z^E$, then~$\vrpsdLB{k'}{0}{S}$ is a recourse lower bound for~$S$ with respect to~$\Xset{S, k'}$ and~$\hat{\mc{Q}}_C$.
\end{restatable}
\proof
    Let~$\bar{x} \in \X \cap \Z^E$ be as in the statement. We rearrange the term~$\sum_{R \in \mc{R}(\bar{x})} \sum_{v \in S} \hat{\mc{Q}}_C(R, v)$ by introducing the following definitions. For each route~$R = (v_1, \ldots, v_\ell)$, recall that~$\Qc(R) = \Qc(\oa{R})$. In this way, for each~$\xi \in [N]$ and~$i \in [\ell]$, we denote by~$\alpha^\xi_{R, v_i}$ the sum of the demands of the customers in~$\oa{R}$ preceding~$v_i$ in scenario~$\xi$, that is,~$\alpha^\xi_{R, v_i} \coloneqq \sum_{j \in [i - 1]} d^\xi(v_j)$. Using this definition, we have that~$\hat{\mc{Q}}_C(R, v_i) = 2 c_{0 v_i} \sum_{\xi \in [N]} p_\xi\, \failfunction{\alpha^\xi_{R, v_i}}{d^\xi(v_i)}$. 
    In the remainder of the proof, we show that, for every~$\xi \in [N]$,~$$\sum_{R \in \mc{R}(\bar{x})} \sum_{v \in V_+(R) \cap S} 2 c_{0 v} \, \failfunction{\alpha^\xi_{R, v}}{d^\xi(v)} \geq \vrpsdLB{k'}{0}{S}.$$

    Fix~$\xi \in [N]$ and let~$S = \{v'_1, \ldots v'_{|S|}\}$ be such that~$c_{0v'_1} \leq \ldots \leq c_{0 v'_{|S|}}$. Then
    ~$$\vrpsdLB{k'}{0}{S} = \sum_{j \in [1 - k' + (\lceil d^\xi(S) / C \rceil - 1)^{+}]} 2\,c_{0 v'_j} = \sum_{j \in [\lceil d^\xi(S) / C \rceil - k']} 2\,c_{0 v'_j}.$$
    Observe that if~$\lceil d^\xi(S) / C \rceil < 1$, then~$d^\xi(S) = 0$ and~$[\lceil d^\xi(S) / C \rceil - k'] = \emptyset$. 
    
    Hence, without loss of generality, we may assume that~$k' < \lceil d^\xi(S) / C \rceil$. Moreover, since the customers in~$S$ are ordered by their distance to the depot, it follows from Assumption~\ref{assumption:customer_demands} that it suffices to show that~$\sum_{R \in \mc{R}(\bar{x})} \sum_{v \in V_+(R) \cap S} \failfunction{\alpha^\xi_{R, v}}{d^\xi(v)} \geq \lceil d^\xi(S) / C \rceil - k'$.
    
    Since~$\bar{x}(E(S)) = |S| - k'$, the subgraph of the support graph~$G(\bar{x})$ induced by~$S$ consists of~$k'$ disjoint paths. These paths correspond to (customer-)disjoint routes~$\hat{R}_1, \ldots, \hat{R}_{k'}$ such that~$S = \bigcup_{i \in [k']} V_+(\hat{R}_i)$. For each~$i \in [k']$, write~$\hat{R}_i = (u^i_1, \ldots, u^i_{q_i})$ and let~$R_i$ be the unique route in~$\mc{R}(\bar{x})$ that contains~$\hat{R}_i$. Assume that~$u^i_1$ precedes~$u^i_q$ (if~$q \ge 2$) in~$\oa{R}_i$. Then
    \begin{align*}
        & \sum_{R \in \mc{R}(\bar{x})} \sum_{v \in V_+(R) \cap S} \failfunction{\alpha^\xi_{R, v}}{d^\xi(v)} = \sum_{i \in [k']} \sum_{v \in V_+(\hat{R}_i)} \failfunction{\alpha^\xi_{R, v}}{d^\xi(v)} \\
        & \overset{\text{Lemma~\ref{lemma:lower_bound_failures}}}{=} \sum_{i \in [k']} \failfunction{\alpha^\xi_{R, u^i_1}}{d^\xi(\hat{R}_i)} \overset{\text{Lemma~\ref{lemma:count_failures}}}{\geq} \sum_{i \in [k']} \left(\ceil[\Bigg]{\frac{d^\xi(\hat{R}_i)}{C}} - 1 \right) \geq \ceil[\Bigg]{\frac{d^\xi(S)}{C}} - k'. & \tag*{\Halmos} 
    \end{align*}

Finally, we derive recourse lower bounds for a partial route~$\H = (S_1, \ldots, S_\ell)$. In the case of independent demands, previous works~\citep{hjorring1999new, laporte2002, jabali2014} obtain such bounds by contracting each unstructured set~$S_j$ into a single vertex~$v_{S_j}$, whose expected demand (and variance) is the sum of the expected demands (and variances) of the vertices in~$S$, and whose recourse cost is~$\min_{v \in S_j} \{2 c_{0v}\}$. A natural way of adapting this approach to the VRPSD with scenarios under the classical recourse policy is to lower bound the recourse cost with respect to~$S_j$ by~$$\min_{v \in S_j} \{2 c_{0v}\} \cdot \sum_{\xi \in [N]} p_\xi \, \failfunction{\sum_{i \in [j - 1]} d^\xi(S_i)}{d^\xi(S_j)}.$$ Since this bound is always
at most~$\vrpsdLB{1}{\sum_{i \in [j - 1]} d^\xi(S_i)}{S_j}$, we improve this basic idea as follows:
\begin{proposition}
    \label{prop:partial_route_lower_bound}
    Suppose that customer demands satisfy Assumption~\ref{assumption:customer_demands}.
    For every partial route~$\H = (S_1, \ldots, S_\ell)$, define
    $$\oa{\mc{L}}_C(\H) \coloneqq \sum_{\xi \in [N]} p_\xi \sum_{j \in [\ell]} \vrpsdLB{1}{\sum_{i \in [j - 1]} d^\xi(S_i)}{S_j},$$
    and~$\cev{\mc{L}}_C(\H) \coloneqq \oa{\mc{L}}_C(\H')$, where~$\H' = (S_\ell, \ldots, S_1)$. Then, for every route~$R$ that adheres to~$H$,~$\Qc(R) \geq \mc{L}_C(H)$.
\end{proposition}
\begin{proof}
Fix~$\xi \in [N]$ and let~$\oa{R}$ be a directed route that exactly adheres to~$\H$. For every~$j \in [\ell]$, define~$h_j \coloneqq \sum_{i = 1}^j |S_i|$. By Definition~\ref{def:exact_adherence}, we may write~$\oa{R} = (v_1, \ldots, v_{h_\ell})$ where, for every~$j \in [\ell]$,~$S_j = \{v_{h_{j - 1} + 1}, \ldots, v_{h_j}\}$. 

Fix an arbitrary~$j \in [\ell]$ and let~$F \subseteq S_j$ be the set of customers in~$S_j$ that observe a failure in~$\oa{R}$ (with respect to scenario~$\xi \in [N]$). By Assumption~\ref{assumption:customer_demands} and Lemma~\ref{lemma:count_failures}, we know that~$|F| = \failfunction{\alpha + \sum_{i \in [j - 1]} d^\xi(v_i)}{d^\xi(S_j)}$. Moreover, as the lower bound~$\vrpsdLB{1}{\sum_{i \in [h_{j - 1}]} d^\xi(v_i)}{S_j}$ selects a set of~$\failfunction{\alpha + \sum_{i \in [j - 1]} d^\xi(v_i)}{d^\xi(S_j)}$ customers that are closest to the depot, it follows that
$$\sum_{q = h_{j - 1} + 1}^{h_j} 2 c_{0v_q} \sum_{t = 1}^\infty \mb{I}\left(\sum_{i \in [q - 1]} d^\xi(v_i) \leq t C < \sum_{i \in [q]} d^\xi(v_i)\right) = \sum_{v \in F} 2 c_{0v} \geq \vrpsdLB{1}{\sum_{i \in [h_{j - 1}]} d^\xi(v_i)}{S_j}.$$

This shows that~$\oa{\mc{L}}_C(\H) \leq \mc{Q}(\oa{R})$. A symmetrical reasoning yields~$\cev{\mc{L}}_C(\H) \leq \mc{Q}(\cev{R})$.
\end{proof}

\subsection{Separation routine}
\label{subsection:vrpsd_separation}

Our branch-and-cut algorithm for problem~$\vrpr(\Qc, \Xcvrp)$ uses two families of ILS cuts: the partial route cuts~\eqref{ineq:partial_route_cut} and the set cuts~\eqref{ineq:set_cut}. The discussion in Section~\ref{subsection:limitation_combining} shows that the set cuts may be invalid under a route-based recourse disaggregation of~$\Qc$ (as in Formulation~\eqref{formulation:partial_route}). Therefore, the set cuts are only used with the customer-based disaggregation described in Remark~\ref{remark:disaggregation}. To refer more easily to the different choices of recourse disaggregation, we introduce a parameter~$D \in \{1, 2\}$:~$D = 1$ corresponds to the route-based disaggregation in Formulation~\eqref{formulation:partial_route}, whereas~$D = 2$ corresponds to the one in Remark~\ref{remark:disaggregation}.\label{sym:D} In both cases, the disaggregation is along~$\Om = V_+$. 

Given a candidate solution~$(\bar{x}, \bar{\theta}) \in \R^E_+ \times \Q^{V_+}_+$, we separate valid inequalities for the feasible region~$\F{\mc{Q}_C, \mc{X}_{\tsc{cvrp}}}$ as follows. We first call the \emph{CVRPSEP package}~\citep{Lysgaard2004} to separate violated RCIs and get a corresponding family of customer sets~$\mc{S} \subseteq 2^{V_+}$. Although CVRPSEP separates RCIs heuristically, it also separates exactly the \emph{fractional capacity inequalities}, ensuring that the integer solutions found by our algorithm are indeed feasible (see the first two heuristics described in Section~2.1 of~\cite{Lysgaard2004}).

If~$\mc{S} \neq \emptyset$ and~$D = 2$, we also consider adding the set cuts~$\theta(S) \geq \setLc{S, \bark{S}} \cdot \setWdl{x ; \mc{X}(S, \bark{S})}$, for each~$S \in \mc{S}$. Otherwise, we call Algorithm~\ref{algorithm:partial_route} to generate a collection of partial routes~$\mc{H}$. For each partial route~$H \in \mc{H}$, we try to separate valid inequalities in the following way: if~$D = 2$, we test both the set cut with~$S = V_+(H)$ and the partial route cut~$\theta(V_+(H)) \geq \Lc{H} \cdot \Whs(x ; \Xh{H})$; if instead~$D = 1$, then the set cuts may not be valid, so we only separate the partial route cut~$\theta_v \geq \Lc{H} \cdot \Whs(x ; \Xh{H})$, where~$v$ is the customer in~$H$ with the smallest index.

Observe that when~$D = 2$ and~$H \in \mc{H}$ correspond to a route~$R$, we could instead use the (partial) route cut~$\theta(\Om(R)) \geq \Qc(R) \cdot \Whs(x; \Xh{R})$ (note that~$\theta(\Om(R)) \leq \theta(V_+(R))$). However, preliminary experiments suggest no benefit in doing so. A detailed description of our separation routine is given in Appendix~\ref{appendix:separation}.

\section{Computational study}
\label{section:experiments}

We conducted computational experiments on the VRPSD with scenarios under the classical recourse policy, that is, the problem~$\vrpr(\Qc, \Xcvrp)$ defined in Section~\ref{subsection:vrpsd}. All algorithms were executed in single-thread mode on a machine with an Intel(R) Xeon(R) Gold 6142 CPU @ 2.60GHz processor. We implemented the algorithms in C++, using Gurobi 12 as the LP/MIP solver and the Lemon graph library~\citep{lemon}. The time limit for each run was set to 1800 seconds.

We considered two benchmark sets of instances. The first set is based on the 270 instances proposed by~\cite{jabali2014} for a VRPSD problem under the classical recourse policy and with demands following independent normal distributions. For each instance, we generated 200 scenarios by sampling from these distributions and rounding the demand values to the nearest integer in the interval~$[0, C]$. The second benchmark set consists of the 20 instances proposed for the chance-constrained vehicle routing problem by~\cite{Dinh2018}. For each instance, they generated 200 demand scenarios by sampling from a multivariate normal distribution with correlations between customers proportional to their distances. The benchmark sets and tables containing detailed computational results were submitted jointly with the paper.

\paragraph{\textbf{Numerical results.}}
\label{subsection:numerical_results}

Recall from Section~\ref{subsection:vrpsd_separation} that~$D = 1$ corresponds to the route-based recourse disaggregation used in Formulation~\eqref{formulation:partial_route}, while~$D = 2$ corresponds to the recourse disaggregation in Remark~\ref{remark:disaggregation}. We evaluated three different algorithms:
\begin{itemize}[leftmargin=*]
\item \algBest: uses only partial route cuts with the disaggregation~$D = 1$;
\item \algBestD: uses only partial route cuts with the disaggregation~$D = 2$;
\item \algBestDset: extends~\algBestD~with the ILS set cuts~$\theta(S) \geq \setLc{S} \cdot \setWdl{x ; \mc{X}(S, \bark{S})}$.
\end{itemize}

The numerical results are summarized in Figure~\ref{figure:experiments} and Table~\ref{table:solved}. The table indicates the number of instances solved by each algorithm variant in the time limit of 1800 seconds. Each of Figures~\ref{figure:experiments1} and~\ref{figure:experiments2} has two parts: the left part shows the execution time, and the right part shows the final optimality gaps (relative to the best solution found by all algorithms). In the left part, a point~$(p_1, p_2) \in \R^2$ on the curve of an algorithm indicates that~$p_2\%$ of all instances were solved by the algorithm within~$p_1$ seconds. In the right part, a point~$(p'_1, p'_2)$ means that, for a given algorithm variant,~$p'_2\%$ of the instances had a final optimality gap of at most~$p'_1\%$.

We first note that the partial route cuts with~$D = 1$ dominate those with~$D = 2$. To see this, consider a partial route~$H$ and recall that with~$D = 2$ we use the inequality~$\theta(V_+(H)) \geq \Lc{H} \cdot \Whs(\bar{x}, \Xh{H})$, while with~$D = 1$ we use~$\theta_v \geq \Lc{H} \cdot \Whs(\bar{x}, \Xh{H})$, where~$v$ is the customer in~$H$ with the smallest index. Hence, any~$(\bar{x}, \bar{\theta})$ that satisfies the latter inequality also satisfies the former. However, Figure~\ref{figure:experiments} (and Table~\ref{table:solved}) suggests that this dominance has a limited impact on the overall algorithm performance.

In contrast, our results show that adding ILS set cuts significantly improves performance, indicating that it is better to choose recourse disaggregations that enable additional valid inequalities (such as the ILS set cuts~\eqref{ineq:set_cut}) rather than using the previous dominance argument. This reinforces our discussion in Section~\ref{subsection:disaggregated} on the importance of explicitly defining the disaggregation and the feasibility region~$\F{\mc{Q}, \mc{X}, \Omega}$. In doing so, we leveraged the results from Section~\ref{section:generalizing} to safely combine the partial route and the set cuts. In particular, our approach allowed us to use set cuts to solve the VRPR problem with a recourse function that is not restrictively superadditive (Definition~\ref{def:weak_superadditivity}), which is something that has not been done before.

Table~\ref{table:solved} confirms that~\algBestDset~solves more instances to optimality on both benchmark sets. We note that the previous ILS-based state-of-the-art algorithm for recourse functions that are not restrictively superadditive is due to~\cite{hoogendoorn2023improved}, which solves 246 instances from the benchmark set of~\cite{jabali2014} within one hour, assuming that demands are independent and normally distributed. While our experiments use different probability distributions, the scenarios were sampled from the same normal distributions, making our instances closely related to those used by~\cite{hoogendoorn2023improved}. Moreover, our approach can approximate a much broader class of probability distributions, and we solve 262 of the scenario-based instances (adapted from~\cite{jabali2014}) within a shorter time limit. Finally, we remark that our approach is the only exact algorithm currently available that can handle the correlations present in the instances of~\cite{Dinh2018}.

\begin{figure}[htb]
    \centering
    \subfloat[\cite{jabali2014} instances.]{
        \includegraphics[width=0.45\linewidth]{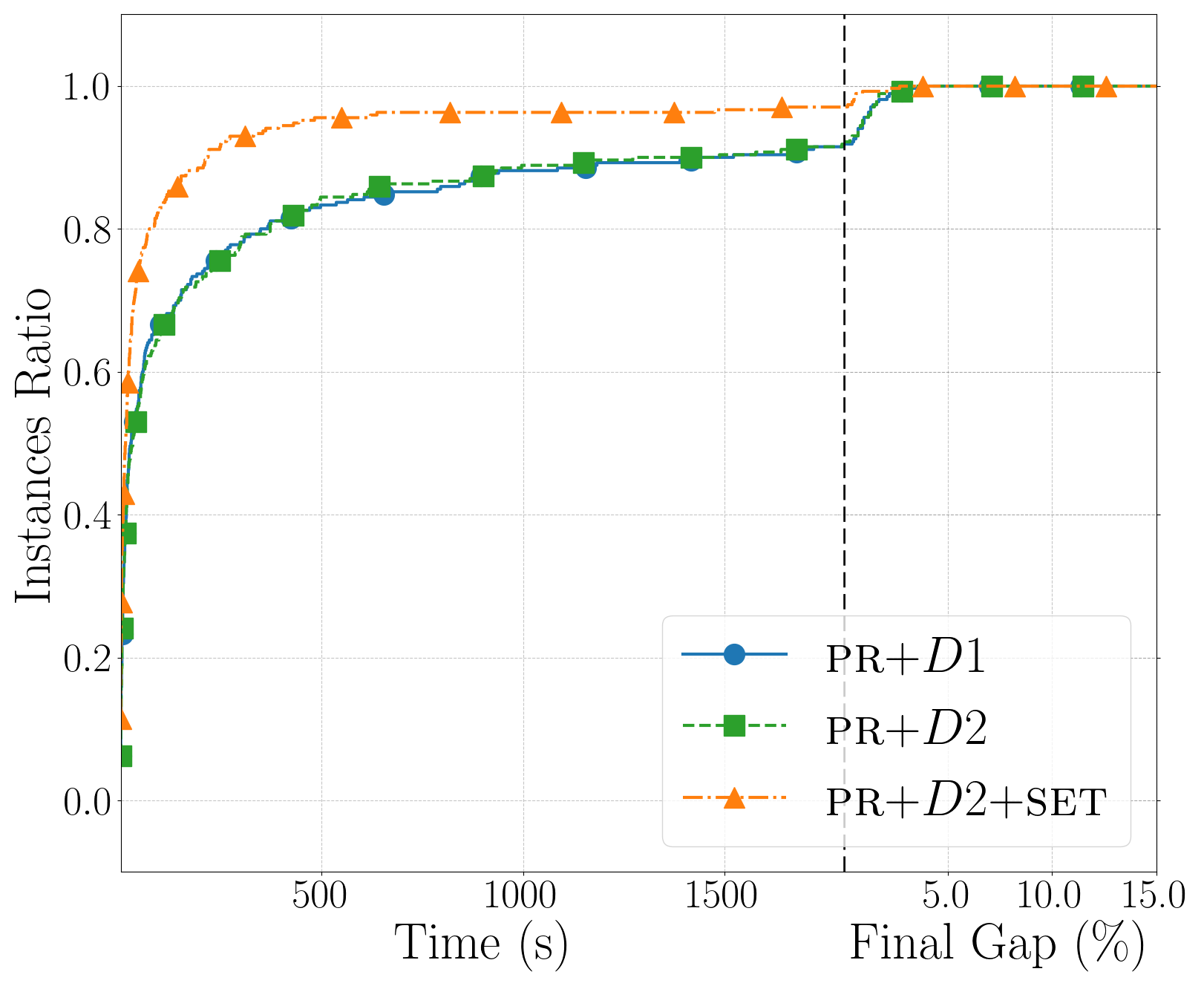}
        \label{figure:experiments1}
    }
    \hfill
    \subfloat[\cite{Dinh2018} instances.]{
        \includegraphics[width=0.45\linewidth]{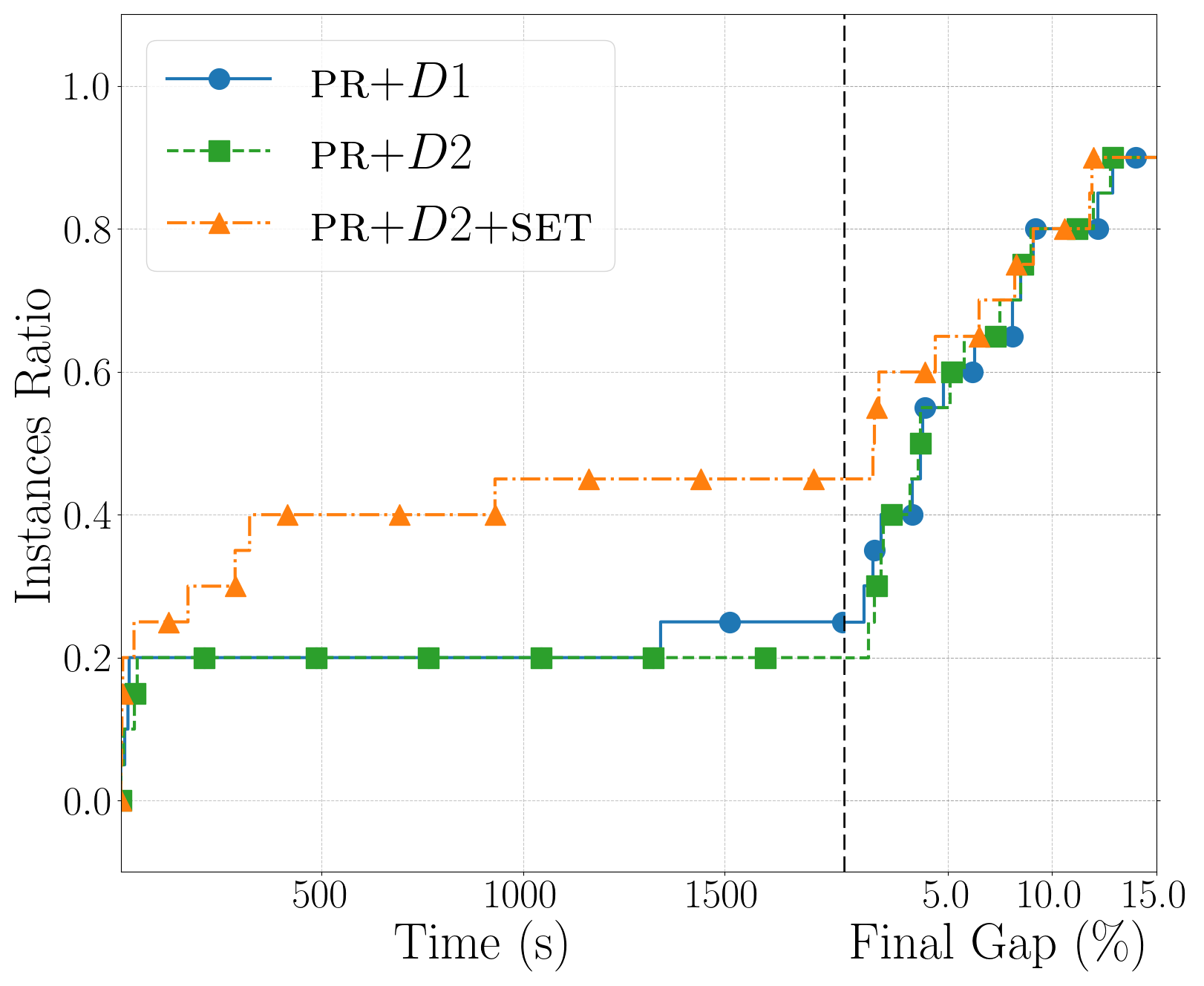}
        \label{figure:experiments2}
    }
    \\[0.3cm]
    \caption{
        Execution times and optimality gaps for different ILS branch-and-cut algorithms.
    }
    \label{figure:experiments}
\end{figure}

\begin{table}[htb!]
\centering
\begin{tabular}{l@{\hskip 2em}r@{\hskip 2em}r@{\hskip 2em}r@{\hskip 2em}r}
\toprule
Instance Set & \# Instances & \algBest & \algBestD & \algBestDset \\ 
\midrule
\cite{jabali2014} & 270 & 247 & 248 & 262 \\
\cite{Dinh2018} & 20 & 5 & 4 & 9 \\
\bottomrule
\end{tabular}
\caption{Total number of instances solved by each algorithm within the time limit of 1800 seconds.}
\label{table:solved}
\end{table}

\section{Concluding remarks}
\label{section:conclusion}

In this work, we proposed a framework for integer L-shaped (ILS) cuts that tries to explicitly identify common themes in previous ILS-based formulations. In particular, our results clarify the assumptions required to derive each component of specific ILS cuts. This generic perspective not only allowed us to generalize previous ILS cuts, but also enabled capabilities that were not possible before. For instance, we can now apply simultaneously cuts that could not be combined without our generalization. In addition, our framework allowed us to design a new ILS-based branch-and-cut approach for the VRPSD with demands given by scenarios, overcoming a well-known difficulty in the field of dealing with correlations.

We believe that future research on ILS-based methods for the VRPSD should try to follow our framework as much as possible. Specifically, future work should, if possible, identify the recourse disaggregation, feasible region, activation functions, and recourse lower bounds. Doing so would make these approaches applicable to a wider variety of problem variations, thus broadening their benefits to the community.

\bibliographystyle{plainnat}
\bibliography{bibliography}

\begin{APPENDICES}
\section{Notation summary}
\label{appendix:notation}

Table~\ref{table:symbols} summarizes the main symbols used in the paper. We abbreviate ``recourse lower bound'' as RLB.

\begin{table}[H]
\centering
\footnotesize
\renewcommand{\arraystretch}{1.2}
\begin{tabular}{l@{\hskip 0.5em}l@{\hskip 0.5em}l}
\toprule
\textbf{Symbol} & \textbf{Description} & \textbf{Reference / Page} \\
\midrule
$V_+$ & Set of customers. & Section~\ref{subsection:problem} / Page~\pageref{sym:V+} \\
$R, \oa{R}, \cev{R}$ & Routes and directed routes. & Section~\ref{subsection:problem} / Page~\pageref{sym:dir_route} \\
$\mc{Q}$ & Recourse function. & Section~\ref{subsection:problem} / Page~\pageref{sym:Q} \\
$G(\bar{x})$ & Support graph associated with~$\bar{x}$. & Section~\ref{subsection:problem} / Page~\pageref{sym:support_graph} \\
$\Xsub$ & Polytope containing all routing plans. & Section~\ref{subsection:problem} / Page~\pageref{set:subtour} \\
$\X$ & Polytope containing feasible routing plans. & Assumption~\ref{assumption:formulation} / Page~\pageref{assumption:formulation} \\
$\epiQ{\mc{Q}, \mc{X}}$ & Epigraph of~$\mc{Q}(x)$ (i.e., feasible region of~\ref{problem:vrpr}). & Section~\ref{subsection:problem} / Page~\pageref{problem:vrpr} \\
$d^\xi, \bar{d}$ & Demand in scenario~$\xi \in [N]$ and the expectation~$\mb{E}[d]$. & Section~\ref{subsection:vrpsd} / Page~\pageref{sym:demands} \\
$\Xcvrp$ & Polytope containing feasible CVRP solutions. & Section~\ref{subsection:cvrp} / Page~\pageref{set:cvrp} \\
$\Qc$ & Expected recourse cost of~$R$ under the classical recourse policy. & Section~\ref{subsection:classical_recourse} / Page~\pageref{eq:formula_dror} \\
$W(x ; \mc{X}')$ & Activation function. & Definition~\ref{def:activation_function} / Page~\pageref{def:activation_function} \\
$\H, V_+(\H)$ & Partial route~$\H$ and its customers~$V_+(\H)$. & Definition~\ref{def:partial_route} / Page~\pageref{def:partial_route} \\
$\Xh{H}$ & Solutions containing routes that adhere to~$\H$. & Definition~\ref{def:exact_adherence} / Page~\pageref{def:exact_adherence} \\
$\supsetXh{R'}$ & Solutions containing~$R'$ as a subroute. & Section~\ref{subsection:review_dl-shaped} / Page~\pageref{set:path_cut} \\
$\supsetXh{\H}$ & Solutions containing subroutes that adhere to~$\H$. & Appendix~\ref{appendix:activation_function} \\
$\supsetWof{x}{H}$ & Activation function for~$\supsetXh{\H}$. & Appendix~\ref{appendix:activation_function} \\
$\Whs(x ; \Xh{H})$ & Activation function for~$\Xh{H}$. & Remark~\ref{remark:minor_contribution} / Page~\pageref{remark:minor_contribution} \\
$\mc{L}(\H)$ & Lower bound on~$\mc{Q}(R)$, for every route~$R$ that adheres to~$\H$. & Formulation~\ref{formulation:partial_route} / Page~\pageref{formulation:partial_route} \\
$\Xset{S, k'}$ & Vectors~$x \in \X \cap \Z^E$ such that~$x(E(S)) = |S| - k'$. & Section~\ref{subsection:review_dl-shaped} / Page~\pageref{set:x_set} \\
$\Wdl{x ; \,\cdot\,}$ & Activation function of the DL-shaped method. & Section~\ref{subsection:review_dl-shaped} / Page~\pageref{eq:activation_set} \\ 
$\mc{L}_{DL}(S)$ & RLB for set cuts used by the DL-shaped method. & Section~\ref{subsection:review_dl-shaped} / Page~\pageref{eq:activation_set} \\
$R_1 \oplus R_2$ & Route concatenation. & Section~\ref{subsection:limitations} / Page~\pageref{sym:concat} \\
$\{\mc{Q}(R, \omega)\}_{\omega\in \Om}$ & Recourse disaggregation of~$\mc{Q}$. & Definition~\ref{def:recourse_disaggregation} / Page~\pageref{def:recourse_disaggregation} \\
$\Om(R)$ & Set of~$\omega \in \Om$ such that~$\mc{Q}(R, \omega) > 0$. & Definition~\ref{def:recourse_disaggregation} / Page~\pageref{def:recourse_disaggregation} \\
$\F{\mc{Q}, \X, \Om}$ & Feasible region associated with a disaggregation of~$\mc{Q}$. & Section~\ref{subsection:disaggregated} / Page~\pageref{set:F} \\
$\projrho(\,\cdot\,)$ & Projection from~$(x, \theta) \in \R^E \times \R^{\Om}_+$ onto~$(x, \rho) \in \R^E \times \R_+$. & Section~\ref{subsection:disaggregated} / Page~\pageref{sym:projrho} \\
$\L(U, \mc{X}')$ & RLB with respect to~$U \subseteq \Om$ and~$\mc{X}' \subseteq \X \cap \Z^E$. & Definition~\ref{def:recourse_lower_bound} / Page~\pageref{def:recourse_lower_bound} \\
$\mc{C}$ & Family of ILS cuts. & Definition~\ref{def:ils_cut} / Page~\pageref{def:ils_cut} \\
$\Wg(x ; \{\bar{x}\})$ & Activation function of Gendreau et al. (1995). & Section~\ref{subsection:gendreau} / Page~\pageref{sym:Wg} \\
$\routes$ & Set of realizable routes. & Definition~\ref{def:realizable_routes} / Page~\pageref{def:realizable_routes} \\
$\failfunction{\alpha}{\beta}$ & Nb. of failures observed while collecting~$\beta$ with initial load~$\alpha$. & Section~\ref{subsection:vrpsd_lower_bounds} / Page~\pageref{def:fail_function} \\
$\vrpsdLB{\nu}{\alpha}{S}$ & RLB for serving~$S \subseteq V_+$ with~$\nu$ vehicles and initial load~$\alpha$. & Section~\ref{subsection:vrpsd_lower_bounds} / Page~\pageref{def:vrpsd_lower_bound} \\
$\setLc{S, k'}, \Lc{\H}$ & RLBs for the classical recourse policy. & Section~\ref{subsection:vrpsd_lower_bounds} / Pages~\pageref{prop:set_recourse_lower_bound}, \pageref{prop:partial_route_lower_bound} \\
$D \in \{1, 2\}$ & Disaggregation type. & Section~\ref{subsection:vrpsd_separation} / Page~\pageref{sym:D} \\
\bottomrule
\end{tabular}
\caption{Summary of the main symbols used in the paper.}
\label{table:symbols}
\end{table}

\section{Globally valid recourse lower bounds}
\label{appendix:laporte}

Suppose that~$\texttt{LB} \in \Q^{\Om}_+$ is a vector of \emph{globally valid recourse lower bounds}, that is, for every~$\omega \in \Om$, we have~$\sum_{R \in \mc{R}(\bar{x})} \mc{Q}(R, \omega) \geq \texttt{LB}_{\omega}$, for all~$\bar{x} \in \X \cap \Z^E$. Define the \emph{translated feasible region}~$$\bar{\mc{F}} \coloneqq \F{\hat{\mc{Q}}, \mc{X}, \Omega} - \{\texttt{LB}\} = \{(x, \theta - \texttt{LB}) : (x, \theta) \in \F{\hat{\mc{Q}}, \mc{X}, \Omega}\}$$ and observe that
$$\min\{c^\T x + \allones^\T \theta : (x, \theta) \in \F{\hat{\mc{Q}}, \mc{X}, \Omega}\} = \allones^\T \texttt{LB} + \min\{c^\T x + \mathbbm{1}^\T \bar{\theta} : (x, \bar{\theta}) \in \bar{\mc{F}}\}.$$
So we can solve the original problem~\ref{problem:vrpr} by optimizing over~$\bar{\mc{F}}$.

The advantage of doing this translation step is that it may allow us to improve the coefficients in an ILS cut. Specifically, if we are given an ILS cut
\begin{equation}
    \label{ineq:global_lower_bound1}
    \theta(U) \geq \L(U, \mc{X}') \cdot \W(x ; \mc{X}'),
\end{equation}
then the inequality
\begin{equation}
    \label{ineq:global_lower_bound2}
    \bar{\theta}(U) \geq (\L(U, \mc{X}') - \texttt{LB}(U)) \cdot \W(x ; \mc{X}')
\end{equation}
is valid for~$\bar{\mc{F}}$. Moreover, if~$(x', \bar{\theta}')$ satisfies~$W(x' ; \mc{X}') \leq 1$ and~\eqref{ineq:global_lower_bound2}, then~$(x', \bar{\theta}' + \texttt{LB})$ satisfies the ILS cut~\eqref{ineq:global_lower_bound1}, since
\begin{align*}
    \bar{\theta}'(U) + \texttt{LB}(U) & \geq \texttt{LB}(U) + (\L(U, \mc{X}') - \texttt{LB}(U)) \cdot \W(x' ; \mc{X}') \\
    & \geq (\texttt{LB}(U) + \L(U, \mc{X}') - \texttt{LB}(U)) \cdot \W(x' ; \mc{X}') \\
    & = \L(U, \mc{X}') \cdot \W(x' ; \mc{X}').
\end{align*}

Therefore, given a family of ILS cuts~$\mc{C}$, rather than using Formulation~\ref{formulation:ILS}, it might be beneficial to instead use the translated formulation:
\begin{subequations}
\label{formulation:ILS2}
\begin{align*} 
\min ~~& c^\T x + \allones^\T \bar{\theta}, & \nonumber \\
\text{s.t.~~} & x \in \X \cap \Z^E, & \\
& \bar{\theta}(U) \geq (\mc{L}(U, \mc{X}') - \texttt{LB}(U)) \cdot (\alpha^\T x + \beta), & \forall (U, \mc{X}', \mc{L}(U, \mc{X}'), \alpha, \beta) \in \mc{C}, \\
& \bar{\theta} \in \R^{\Omega}_+.
\end{align*}
\end{subequations}

We remark, however, that for VRPSDs, globally valid recourse lower bounds are usually only available when no disaggregation is used, that is, when~$\Om = \{\hat{\omega}\}$. In this setting,~\cite{laporte2002} (see also~\cite{jabali2014, Salavati2019175, hoogendoorn2023improved}) replace the ILS cuts of~\cite{gendreau95} with the inequalities
\begin{equation*}
\label{ineq:laporte}
\bar{\theta}_{\hat{\omega}} = \theta_{\hat{\omega}} - \texttt{LB}_{\hat{\omega}} \geq (\mc{Q}(\bar{x}) - \texttt{LB}_{\hat{\omega}}) \cdot \Wg(x ; \{\bar{x}\}).
\end{equation*}
On the other hand, when recourse disaggregation is used (e.g., $\Om = V_+$), globally valid recourse lower bounds are typically not available. Still, ILS algorithms that use recourse disaggregation without globally valid recourse lower bounds tend to outperform algorithms that use such lower bounds but do not use recourse disaggregation (for instance, see the experiments in~\cite{hoogendoorn2023improved}).

\section{Activation function for partial routes}
\label{appendix:activation_function}

In this subsection, we present an alternative proof of the activation function~$\Whs(x ; \Xh{H})$ designed by~\cite{hoogendoorn2023improved}. Our goal here is both to keep the paper self-contained and to provide a simpler, more intuitive argument than the one given in their online appendix. In particular, we believe that their case analysis and algorithmic proof provide limited insight into the choice of their activation function coefficients (see Appendix~\ref{appendix:hoogendoorn} for their formula). In contrast, our derivation expresses~$\Whs(x ; \Xh{H}) - 1$ as the sum of nonpositive terms, making the correctness of our construction more transparent.

Let~$\H$ be a partial route. We first extend the definition of~$\mc{X}_{=}(\H)$ by relaxing the adherence 
condition. Recall that
\begin{align}
    & \mc{X}_{=}(\H) \coloneqq \left\{ x \in \X \cap \Z^E :~\text{there exists~$R 
    \in \mc{R}(x)$ such that~$R$ adheres to~$\H$} \right\}. \tag{$\mc{X}_{=}(\H)$} 
    \label{set:x_h_2} 
\end{align}
We define
\begin{align}
    & \mc{X}_{\supseteq}(\H) \coloneqq \left\{ x \in \X \cap \Z^E :~\text{$\exists R \in \mc{R}(x)$ and~$R' \subseteq R$ s.t.~$R'$ adheres 
    to~$\H$} \right\}, \tag{$\mc{X}_{\supseteq}(\H)$} \label{set:x_supseteq_h}
\end{align}
and note that~$\Xh{\H} \subseteq \supsetXh{\H}$.

\begin{figure}[htb!]
    \centering
    \includegraphics[width=0.35\textwidth]{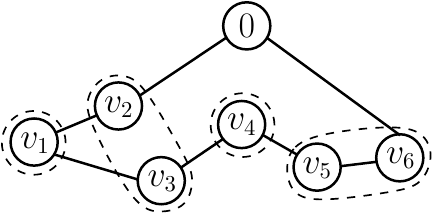}
    \caption{Solution~$\bar{x}$ consisting of a single route~$R = (v_2, v_1, v_3, v_4, v_5, v_6)$. Observe that~$R$ does not adhere to~$\H = (\{v_1\}, \{v_2, v_3\}, \{v_4\}, \{v_5, v_6\})$, so~$\bar{x} \notin \Xh{H}$. However, there exists a subroute of~$R$ that adheres to~$\H' = (\{v_4\}, \{v_5, v_6\})$, so~$\bar{x} \in \supsetXh{H}$.\label{figure:partial_route}}
\end{figure}

It follows from the definition of partial routes that membership with respect to~$\Xh{H}$ and~$\supsetXh{H}$ can be easily verified:

\begin{fact}
\label{fact:adherence}
Let~$\H = (S_1, \ldots, S_\ell)$ be a partial route and let~$\bar{x} \in \X \cap \Z^E$. Then the following hold:
\begin{enumerate}[(1)]
    \item $\bar{x}$ belongs to~$\supsetXh{H}$ if and only if~$\bar{x}(E(S_i)) = |S_i| - 1$, for every~$i \in [\ell]$, and~$\bar{x}(E(S_i, S_{i + 1})) = 1$, for every~$i \in [\ell - 1]$; and \label{item:fact_adherence1}
    \item $\bar{x}$ belongs to~$\Xh{H}$ if and only if~$\bar{x} \in \supsetXh{H}$ and
    \begin{enumerate}[(2a)]
        \item $\bar{x}(E(0, S_1)) = 2$, if~$\ell = 1$; or
        \item $\bar{x}(E(0, S_1)) = \bar{x}(E(0, S_\ell)) = 1$, if~$\ell \geq 2$.
    \end{enumerate}
    \label{item:fact_adherence2}
\end{enumerate}
\end{fact}
\begin{proof}
    We first prove~\ref{item:fact_adherence1}. Write~$\bar{x} = \sum_{R \in \mc{R}(\bar{x})} \mathbbm{1}_R$, where~$\mathbbm{1}_R$ is the characteristic vector of~$E(R)$. Suppose that~$\bar{x} \in \supsetXh{H}$, so there exists~$R' \in \mc{R}(\bar{x})$ such that~$R'$ adheres to~$\H$. By the definition of adherence, for every~$i \in [\ell]$, we have~$\mathbbm{1}_{R'}(E(S_i)) = |S_i| - 1$ and, if~$i \leq \ell - 1$,~$\mathbbm{1}_{R'}(E(S_i, S_{i + 1})) = 1$. Moreover, any other route~$R \in \mc{R}(\bar{x}) \setminus \{R'\}$ satisfies~$\mathbbm{1}_{R}(E(S_i)) = 0$ and~$\mathbbm{1}_{R}(E(S_i, S_{i + 1})) = 0$ (if~$i \leq \ell - 1$), as desired.
    
    To prove the converse, suppose now that~$\bar{x}(E(S_i)) = |S_i| - 1$, for every~$i \in [\ell]$, and~$\bar{x}(E(S_i, S_{i + 1})) = 1$, for every~$i \in [\ell - 1]$. Recall that~$G(\bar{x})$ is the support graph with respect to~$\bar{x}$. For every~$i \in [\ell]$, the edges in~$E(G(\bar{x})) \cap E(S_i)$ induce a path~$P_i$. As~$\bar{x}(E(S_i, S_{i + 1})) = 1$, for all~$i \in [\ell - 1]$, we can concatenate~$P_1, \ldots, P_\ell$ to form a route~$R = (v_1, \ldots, v_{|V_+(\H)|})$ such that~$E(R) \setminus \delta(0) \subseteq E(G(\bar{x}))$. Hence,~$R \in \mc{R}(\bar{x})$ adheres to~$\H$.

    The forward direction of~\ref{item:fact_adherence2} is easy to verify, so we only show the converse. Suppose that~$\bar{x} \in \supsetXh{H}$, so there exists a route~$R = (v_1, \ldots, v_{|V_+(\H)|}) \in \mc{R}(\bar{x})$ such that~$R$ adheres to~$\H$. If~$\bar{x}(E(0, S_1)) = \bar{x}(E(0, S_\ell)) = 1$ (or~$\bar{x}(E(0, S_1)) = 2$, if~$\ell = 1$), then both~$v_1$ and~$v_{|V_+(\H)|}$ are adjacent to the depot in~$G(\bar{x})$ (if~$R = (v_1)$, then~$\bar{x}_{0v_1} = 2$). So~$R$ adheres to~$\H$ and~$\bar{x} \in \Xh{H}$. 
\end{proof}

Recall that for a partial route~$H = (S_1, \ldots, S_\ell)$, we have~$E(\H) = \cup_{i \in [\ell]} (E(S_i, S_{i - 1} \cup S_{i + 1}) \cup E(S_i))$. We rewrite the condition in item~\ref{item:fact_adherence1} of Fact~\ref{fact:adherence} as follows.

\begin{lemma}
    \label{lemma:equivalence_adherence}
    Let~$\H = (S_1, \ldots, S_\ell)$ be a partial route and let~$\bar{x} \in \X \cap \Z^E$. Then~$\bar{x} \in \supsetXh{H}$ if and only if~$\bar{x}(E(\H) \setminus \delta(0)) = |V_+(\H)| - 1$ and~$\bar{x}(E(S_i)) = |S_i| - 1$, for every~$i \in [\ell]$.
\end{lemma}
\begin{proof}
    It follows from Fact~\ref{fact:adherence} that the condition in the statement is necessary for~$\bar{x} \in \supsetXh{H}$. To show sufficiency, assume that~$\bar{x}(E(\H) \setminus \delta(0)) = |V_+(\H)| - 1$ and~$\bar{x}(E(S_i)) = |S_i| - 1$, for all~$i \in [\ell]$. By Fact~\ref{fact:adherence}, we need to show that~$\bar{x}(E(S_i, S_{i + 1})) = 1$, for every~$i \in [\ell - 1]$. Thus, we may assume that~$\ell \geq 2$.

    Since~$\bar{x}$ satisfy the SECs, for any~$i \in [\ell - 1]$, we have that
    \begin{align*}
        (|S_i| - 1) + (|S_{i + 1}| - 1) + \bar{x}(E(S_i, S_{i + 1})) & = \bar{x}(E(S_i)) + \bar{x}(E(S_{i + 1})) + \bar{x}(E(S_i, S_{i + 1}))\\
        & \leq |S_i| + |S_{i + 1}| - 1,
    \end{align*}
    meaning that~$\bar{x}(E(S_i, S_{i + 1})) \leq 1$. Hence,
    \begin{align*}
        |V_+(\H)| - 1 & = \bar{x}(E(\H) \setminus \delta(0)) = \sum_{i \in [\ell]} \bar{x}(E(S_i)) + \sum_{i \in [\ell - 1]} \bar{x}(E(S_i, S_{i + 1})) \\
        & \leq \sum_{i \in [\ell]} (|S_i| - 1) + (\ell - 1) = |V_+(\H)| - 1,
    \end{align*}
    which implies that~$\bar{x}(E(S_i, S_{i + 1})) = 1$, for every~$i \in [\ell - 1]$.
\end{proof}

A consequence of Lemma~\ref{lemma:equivalence_adherence} is that the expression $$1 + (x(E(\H) \setminus \delta(0)) - |V_+(\H)| + 1) + \sum_{i \in [\ell]} (x(E(S_i)) - |S_i| + 1)$$ already gives an activation function with respect to~$\supsetXh{H}$. The next result exploits the structure of partial routes to derive a stronger activation function.

\begin{proposition}
    \label{proposition:activation_function_adheres}
    Let~$\H = (S_1, \ldots, S_\ell)$ be a partial route. Then
    \begin{equation*}
        W_{OF}(x; \mc{X}_{\supseteq}(\H)) \coloneqq 1 + (x(E(\H) \setminus \delta(0)) - |V_+(\H)| + 1) + \sum_{i \in \{2, \ell - 1\} \cap [\ell]}(x(E(S_i)) - |S_i| + 1).
    \end{equation*}
    is an activation function.
\end{proposition}
\begin{proof}
    By Lemma~\ref{lemma:equivalence_adherence}, any~$x' \in \supsetXh{H}$ satisfies~$\supsetWof{x'}{H} = 1$. Conversely, suppose that~$\bar{x} \in \X \cap \Z^E$ is such that~$\supsetWof{\bar{x}}{H} = 1$, meaning that~$\bar{x}(E(\H) \setminus \delta(0)) = |V_+(\H)| - 1$ and~$\bar{x}(E(S_i)) = |S_i| - 1$, for every~$i \in \{2, \ell - 1\} \cap [\ell] $. It follows from Lemma~\ref{lemma:equivalence_adherence} that it suffices to show that~$\bar{x}(E(S_i)) = |S_i| - 1$, for every~$i \in [\ell] \setminus \{2, \ell - 1\}$, so we may assume that~$\ell \geq 3$.
    
    We first consider~$i = 1$. Since~$\bar{x}(E(\H) \setminus \delta(0)) = |V_+(\H)| - 1$, we know that~$G(\bar{x})$ contains a path spanning~$V_+(\H)$. Therefore,~$\bar{x}(E(S_1, S_2)) \geq 1$ and~$\bar{x}(E(S_2, S_3)) \geq 1$, meaning that~$\bar{x}(\delta(S_2)) \geq 2$. Summing the degree constraints for the vertices in~$S_2$ and using~$\bar{x}(E(S_2)) = |S_2| - 1$, we learn that~$2 |S_2| = 2 \, \bar{x}(E(S_2)) + \bar{x}(\delta(S_2)) \geq 2 (|S_2| - 1) + 2 = 2 |S_2|$, which implies that~$\bar{x}(E(S_1, S_2)) = \bar{x}(E(S_2, S_3)) = 1$. Now take~$H' = (S_2, \ldots, S_\ell)$, then
    \begin{align*}
        |V_+(\H)| - 1 & = \bar{x}(E(\H) \setminus \delta(0)) = \bar{x}(E(S_1)) + \bar{x}(E(S_1, S_2)) + \bar{x}(E(H') \setminus \delta(0)) \\
        & \leq (|S_1| - 1) + 1 + (|V_+(H')| - 1) = |V_+(\H)| - 1,
    \end{align*}
    proving that~$\bar{x}(E(S_1)) = |S_1| - 1$. By symmetry, this also shows that~$\bar{x}(E(S_\ell)) = |S_\ell| - 1$.

    To close the proof, let~$i \in \{3, \ldots, \ell - 2\}$. If~$S_i$ is a singleton we are done, otherwise, we know that~$S_{i - 1} = \{u\}$ and~$S_{i + 1} = \{v\}$. As~$G(\bar{x})$ contains a path spanning~$V_+(\H)$, we know that~$\bar{x}(E(u, S_{i - 2})) \geq 1$ and~$\bar{x}(E(u, S_i)) \geq 1$, which implies that equality is attained in both cases (since~$\bar{x}(\delta(u)) = 2$). The same argument can be applied to conclude that~$\bar{x}(E(v, S_i)) = 1$. Next, consider the partial routes~$H_1 = (S_1, \ldots, S_{i - 1})$ and~$H_2 = (S_{i + 1}, \ldots, S_\ell)$, then
    \begin{align*}
        |V_+(\H)| - 1 & = \bar{x}(E(\H) \setminus \delta(0)) \\
        & = \bar{x}(E(H_1) \setminus \delta(0)) + \bar{x}(E(u, S_i)) + \bar{x}(E(S_i)) + \bar{x}(E(S_i, v)) + \bar{x}(E(H_2) \setminus \delta(0)) \\
        & \leq (|V_+(H_1)| - 1) + 1 + (|S_i| - 1) + 1 + (|V_+(H_2)| - 1) = |V_+(\H)| - 1, 
    \end{align*}
    which implies that~$\bar{x}(S_i) = |S_i| - 1$.
\end{proof}

Combining Proposition~\ref{proposition:activation_function_adheres} with item~\ref{item:fact_adherence2} of Fact~\ref{fact:adherence} we obtain the desired activation function for~$\Xh{H}$. In Appendix~\ref{appendix:hoogendoorn} we show that the resulting activation function is indeed equivalent to the one proposed by~\cite{hoogendoorn2023improved}.

\begin{theorem}
    \label{thm:partial_route_activation_exact_adheres}
    Let~$\H = (S_1, \ldots, S_\ell)$ be a partial route. Then
    \begin{align*}
        \Whs(x; \Xh{H}) \coloneqq \supsetWof{x}{H} + \sum_{i \in \{1, \ell\}} (x(\delta(S_i) \cap E(\H)) + 2 x(E(S_i)) - 2 |S_i|)
    \end{align*}
    is an activation function.
\end{theorem}
\begin{proof}
    We first show that if~$\bar{x} \in \Xh{\H}$, then~$\Whs(\bar{x}; \Xh{H}) = 1$. Since~$\Xh{\H} \subseteq \supsetXh{H}$, we know that~$\supsetWof{\bar{x}}{H} = 1$. If~$\ell = 1$, it follows from Fact~\ref{fact:adherence} that~$\bar{x}(E(0, S_1)) + 2 \bar{x}(E(S_1)) = 2 + 2(|S_1| - 1) = 2|S_1|$. Similarly, if~$\ell \geq 2$, then~$\bar{x}(E(S_{i - 1}, S_i)) + 2 \bar{x}(E(S_i)) + \bar{x}(E(S_i, S_{i + 1})) = 2 + 2(|S_i| - 1) = 2|S_i|$, for every~$i \in \{1, \ell\}$ (recall that~$S_0 = S_{\ell + 1} = \{0\}$). Overall, this proves that~$\Whs(\bar{x}; \Xh{H}) = 1$.
    
    To prove the converse, we assume~$\Whs(\bar{x}; \Xh{\H}) = 1$ and we show that~$\bar{x} \in \Xh{\H}$. Since~$\supsetWof{x}{H}$ is an activation function and all the terms in~$\Whs(\bar{x}; \Xh{\H}) - \supsetWof{x}{H}$ are nonpositive for any~$x \in \X$, we have that~$1 = \Whs(\bar{x}; \Xh{H}) \leq \supsetWof{\bar{x}}{H} \leq 1$. This implies that~$\supsetWof{\bar{x}}{H} = 1$ (i.e.,~$\bar{x} \in \supsetXh{H}$) and all the terms in~$\Whs(\bar{x}; \Xh{\H}) - \supsetWof{\bar{x}}{H}$ evaluate to zero. 
    
    Now suppose that~$\ell = 1$. Then~$\bar{x} \in \supsetXh{H}$ implies that~$\bar{x}(E(S_1)) = |S_1| - 1$, and since~$\bar{x}(E(0, S_1)) + 2 \bar{x}(E(S_1)) = 2 |S_1|$, it follows that~$\bar{x}(E(0, S_1)) = 2$. Applying Fact~\ref{fact:adherence} we conclude that~$\bar{x} \in \Xh{H}$. Similarly, if~$\ell \geq 2$,~$\bar{x} \in \supsetXh{H}$ implies that~$\bar{x}(E(S_1)) = |S_1| - 1$ and~$\bar{x}(E(S_1, S_2)) = 1$. Since~$\bar{x}(E(0, S_1)) + 2 \bar{x}(E(S_1)) + \bar{x}(E(S_1, S_2)) = 2 |S_1|$, we have that~$\bar{x}(E(0, S_1)) = 1$. A symmetric argument shows that~$\bar{x}(E(0, S_\ell)) = 1$, so we are done by Fact~\ref{fact:adherence}.
\end{proof}

\section{Equivalence with the activation function of Hoogendoorn and Spliet (2023)}
\label{appendix:hoogendoorn}

Let~$\H = (S_1, \ldots, S_\ell)$ be a partial route (with~$S_0 = \{0\}$ and~$S_{\ell + 1} = \{0\}$). \cite{hoogendoorn2023improved} proposed the following activation function:
\begin{equation*}
    \Whs(x; \Xh{H}) = \gamma + \sum_{i = 1}^{\ell} \alpha_i (x(E(S_i)) - |S_i| + 1) + \sum_{i = 0}^{\ell} \beta_i \left( x(E(S_i, S_{i + 1})) - 1 \right),
\end{equation*}
where
\begin{align*}
(\alpha_1, \ldots, \alpha_\ell) & =
    \begin{cases}
    (3), & \text{if~$\ell = 1$,} \\
    (4, 4), & \text{if~$\ell = 2$,} \\
    (3, 2, 3), & \text{if~$\ell = 3$,} \\
    (3, 2, 1, \ldots, 1, 2, 3), & \text{if~$\ell \geq 4$.}
    \end{cases} \\
(\beta_0, \ldots, \beta_\ell) & =
    \begin{cases}
    (1, 0), & \text{if~$\ell = 1$,} \\
    (1, 3, 1), & \text{if~$\ell = 2$,} \\
    (1, 2, 1, \ldots, 1, 2, 1), & \text{if~$\ell \geq 3$,}
    \end{cases} \\
\gamma & =
    \begin{cases}
    0, & \text{if~$\ell = 1$,} \\
    1, & \text{if~$\ell \geq 2$.}
    \end{cases}
\end{align*}

To see that~$\Whs$ is equivalent to the activation function in Theorem~\ref{thm:partial_route_activation_exact_adheres}, we check the different cases for~$\ell$:

\noindent
\textbf{Case $\ell = 1$:} 
\begin{flalign*}
    \Whs(x ; \Xh{H}) & = (x(E(0, S_1)) - 1) + 3 (x(E(S_1)) - |S_1| + 1) \\
    & = 1 + (x(E(\H) \setminus \delta(0)) - |V_+(\H)| + 1) + (x(E(0, S_1)) + 2 x(E(S_1)) - 2 |S_1|) \\
    & = \supsetWof{x}{H} + (x(E(0, S_1)) + 2 x(E(S_1)) - 2 |S_1|)
\end{flalign*}

\noindent
\textbf{Case $\ell = 2$:}
\begin{flalign*}
    \Whs(x ; \Xh{H}) & = 
    \begin{aligned}[t]
    & 1 + (x(E(0, S_1)) - 1) + 4 (x(E(S_1)) - |S_1| + 1) + 3 (x(S_1, S_2) - 1) \\
    & + 4 (x(S_2) - |S_2| + 1) + (x(0, S_2) - 1)
    \end{aligned} \\
    & = 
    \begin{aligned}[t]
    & 1 + (x(E(\H) \setminus \delta(0)) - |V_+(\H)| + 1)\\
    & + (x(E(S_1)) - |S_1| + 1) + (x(E(S_2)) - |S_2| + 1) \\
    & + (x(E(0, S_1)) + 2 x(E(S_1)) + x(E(S_1, S_2)) - 2 |S_1|) \\
    & + (x(E(0, S_2)) + 2 x(E(S_2)) + x(E(S_1, S_2)) - 2 |S_2|)
    \end{aligned}\\
    & = 
    \begin{aligned}[t]
    & \supsetWof{x}{H} \\
    & + (x(E(0, S_1)) + 2 x(E(S_1)) + x(E(S_1, S_2)) - 2 |S_1|) \\
    & + (x(E(0, S_2)) + 2 x(E(S_2)) + x(E(S_1, S_2)) - 2 |S_2|) \\
    \end{aligned}
\end{flalign*}

\noindent
\textbf{Case $\ell = 3$:}
\begin{flalign*}
    \Whs(x ; \Xh{H}) & = 
    \begin{aligned}[t]
    & 1 + (x(E(0, S_1)) - 1) + 3 (x(E(S_1)) - |S_1| + 1) + 2 (x(E(S_1, S_2)) - 1) \\
    & + 2 (x(E(S_2)) - |S_2| + 1) + 2 (x(E(S_2, S_3)) - 1) \\
    & + 3 (x(E(S_3)) - |S_3| + 1) + (x(E(0, S_3)) - 1)
    \end{aligned} \\
    & = 
    \begin{aligned}[t]
    & 1 + (x(E(\H) \setminus \delta(0)) - |V_+(\H)| + 1) + (x(E(S_2)) - |S_2| + 1)\\
    & + (x(E(0, S_1)) + 2 x(E(S_1)) + x(E(S_1, S_2)) - 2 |S_1|) \\
    & + (x(E(0, S_3)) + 2 x(E(S_3)) + x(E(S_2, S_3)) - 2 |S_3|)
    \end{aligned}\\
    & =
    \begin{aligned}[t]
    & \supsetWof{x}{H} \\
    & + (x(E(0, S_1)) + 2 x(E(S_1)) + x(E(S_1, S_2)) - 2 |S_1|) \\
    & + (x(E(0, S_3)) + 2 x(E(S_3)) + x(E(S_2, S_3)) - 2 |S_3|)
    \end{aligned}\\
\end{flalign*}

\noindent
\textbf{Case $\ell \geq 4$:}
\begin{flalign*}
    \Whs(x ; \Xh{H}) & = 
    \begin{aligned}[t]
    & 1 + (x(E(0, S_1)) - 1) + 3 (x(E(S_1)) - |S_1| + 1) + 2 (x(E(S_1, S_2)) - 1) \\
    & + 2 (x(E(S_2)) - |S_2| + 1) + (x(E(S_2, S_3)) - 1) \\
    & + \sum_{i = 3}^{\ell - 2} (x(E(S_i)) - |S_i| + 1) + (x(E(S_i, S_{i + 1})) - 1) \\
    & + 2 (x(E(S_{\ell - 1})) - |S_{\ell - 1}| + 1) + 2 (x(E(S_{\ell - 1}, S_\ell)) - 1) \\
    & + 3 (x(E(S_\ell)) - |S_\ell| + 1) + (x(E(0, S_\ell)) - 1)
    \end{aligned}\\
    & = 
    \begin{aligned}[t]
    & 1 + (x(E(\H) \setminus \delta(0)) - |V_+(\H)| + 1)\\
    & + (x(E(S_2)) - |S_2| + 1) + (x(E(S_{\ell - 1})) - |S_{\ell - 1}| + 1) \\
    & + (x(E(0, S_1)) + 2 x(E(S_1)) + x(E(S_1, S_2)) - 2 |S_1|) \\
    & + (x(E(0, S_\ell)) + 2 x(E(S_\ell)) + x(E(S_{\ell - 1}, S_\ell)) - 2 |S_\ell|) 
    \end{aligned}\\
    & = 
    \begin{aligned}[t]
    & \supsetWof{x}{H} \\
    & + (x(E(0, S_1)) + 2 x(E(S_1)) + x(E(S_1, S_2)) - 2 |S_1|) \\
    & + (x(E(0, S_\ell)) + 2 x(E(S_\ell)) + x(E(S_{\ell - 1}, S_\ell)) - 2 |S_\ell|) 
    \end{aligned}
\end{flalign*}

\section{Separating partial route inequalities}
\label{appendix:partial_route_separation}

In this appendix section, we present a separation heuristic for partial route inequalities that also gives a slight (theoretical) improvement upon the one proposed by~\cite{hoogendoorn2023improved}. Like in their heuristic, given a fractional solution~$\bar{x} \in \X$, our algorithm uses the classical depth-first search procedure of~\cite{hopcroft1973algorithm} to detect biconnected components and cut-vertices (or articulation points) of the support graph~$G(\bar{x}) \setminus \{0\}$. However, \cite{hoogendoorn2023improved} only consider the components of~$G(\bar{x}) \setminus \{0\}$ whose total flow to the depot is exactly 2. Instead, we leverage the concept of \emph{block-cut trees} (also known as \emph{block graphs}, see Section~3.1 of~\cite{diestel}) to generalize their approach and possibly separate more partial route inequalities.

To describe our heuristic, we recall some graph-theoretic concepts. Let~$G'$ be an arbitrary graph. A \emph{cut vertex} is a vertex~$v \in V(G')$ such that~$G' - v$ has more connected components than~$G'$, and we say that~$G'$ is \emph{biconnected} if it is connected and has no cut vertex. For our purposes, a \emph{block}~$B \subseteq V(G')$ is the vertex set of a maximal (inclusionwise) biconnected subgraph of~$G'$. Let~$\mc{B}(G')$ (resp.~$\mc{C}(G')$) denote the set of all blocks (resp. cut vertices) of~$G'$. Assuming that~$G'$ is connected, the \emph{block-cut tree} of~$G'$ is a bipartite graph~$T$ with vertex set~$\mc{B}(G') \cup \mc{C}(G')$ and edges~$\{B, v\}$ for every block~$B \in \mc{B}(G')$ and cut vertex~$v \in B$. It is easy to verify that~$T$ is a tree: if~$T$ contained a cycle, then that cycle would contain a vertex in~$\mc{C}(G')$, contradicting the definition of a cut vertex (see~\cite{diestel}). When~$G'$ is not connected, we have a block-cut tree for each connected component~$G'_1, \ldots, G'_t$ of the graph~$G'$. In this case, the notations~$\mc{B}(G')$ and~$\mc{C}(G')$ refer to the sets~$\cup_{i \in [t]} \mc{B}(G'_i)$ and~$\cup_{i \in [t]} \mc{C}(G'_i)$, respectively.

Let~$(\bar{x}, \bar{\theta}) \in \X \times \R^{V_+}_+$ be a given candidate solution. We construct block-cut trees from~$G(\bar{x})$ as follows. Let~$\bar{V}$ be a set containing exactly one ``dummy vertex''~$\texttt{dummy}(v)$ for each customer~$v \in V_+$ with~$1 \leq \bar{x}_{0v} < 2$, that is,~$\bar{V} = \{\texttt{dummy}(v): 1 \leq \bar{x}_{0v} < 2, v \in V_+\}$. Let~$\bar{G}$ be an auxiliary graph obtained from~$G(\bar{x})$ by deleting all edges connected to the depot and connecting each dummy vertex~$\texttt{dummy}(v)$ to its corresponding customer~$v \in V_+$. Note that, due to the introduction of the dummy vertices, every vertex~$v \in V_+$ with~$1 \leq \bar{x}_{0v} < 2$ is a cut vertex of~$\bar{G}$. Let~$\bar{G}_1, \ldots, \bar{G}_t$ be the connected components of~$\bar{G}$. The \emph{block-cut forest associated with~$G(\bar{x})$} is given by the set~$\mc{T} = \{T_i\}_{i \in [t]}$, where, for each~$i \in [t]$,~$T_i$ is a block-cut tree of~$\bar{G}_i$. We represent each vertex in~$\mc{T}$ by the symbol~$S$, where~$S$ is either a block or a cut vertex. Moreover, for each~$i \in [t]$, we use~$V_+(T_i)$ to refer to the set of customers in~$T_i$, that is,~$V_+(T_i) \coloneqq ((\cup_{S \in \mc{B}(T_i)} S) \cup \mc{C}(T_i)) \cap V_+$.

\begin{figure}[htb]
    \centering
    \subfloat[A fractional solution~$\bar{x} \in \X$ with support graph~$G(\bar{x})$. Solid edges have~$\bar{x}_e = 1$, while dashed edges have~$\bar{x}_e = 0.5$.]{
        \includegraphics[width=0.3\textwidth]{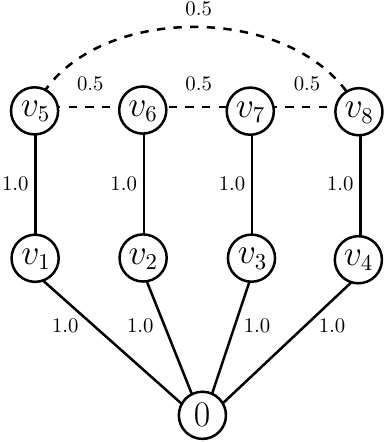}
        \label{figure:tree1}
    }
    \hspace{2.5cm}
    \subfloat[The auxiliary graph~$\bar{G}$ associated with~$G(\bar{x})$. The dashed circled regions indicate the blocks.]{
        \includegraphics[width=0.3\textwidth]{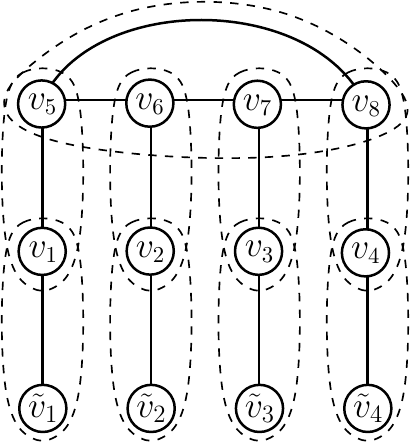}
        \label{figure:tree2}
    }
    \\[0.4cm] 
    \caption{Example of a solution~$\bar{x} \in \X$ where~$G(\bar{x}) \setminus \{0\}$ is made of a single component whose total flow to the depot is 4. The block–cut tree associated with~$G(\bar{x})$ contains a branching vertex corresponding to the block~$\{v_5, v_6, v_7, v_8\}$. In this case, our algorithm considers partial route~$\H = (\{v_1\}, \{v_5\}, \{v_6\}, \{v_2\})$ and we have that~$\Whs(\bar{x}; \Xh{H}) = 0.5$. \label{figure:block-cut-tree}}
\end{figure}

\begin{algorithm}[htb]
  \hspace*{\algorithmicindent} \textbf{Input:} Vectors~$\bar{x} \in \X$ and~$\bar{\theta} \in\Q^{V_+}_+$.\\
  \hspace*{\algorithmicindent} \textbf{Output:} A set of partial routes~$\mc{H}$ for which we try to separate partial route inequalities.
\begin{algorithmic}[1]
    \Procedure {\textsc{GetPartialRoutes}}{$\bar{x}, \bar{\theta}$}
    \State {$\mc{H} \gets \emptyset$}
    \State {Construct~$\bar{G}$ and let~$\mc{T}$ be the block-cut forest associated with~$G(\bar{x})$. \label{alg:block_cut_forest}}
    \State {Let~$\mc{P}$ be the set of all simple paths~$P = (S_1, \ldots, S_\ell) \subseteq \mc{T}$ connecting leaves of~$\mc{T}$ and such that either~$\ell = 1$ or~$S_1 \prec S_\ell$.}
    \For{$P = (S_1, \ldots, S_\ell) \in \mc{P}$ \label{alg:loop_start}}
        \State {$H \gets ( )$}
        \For {$j \in [\ell]$}
            \If {$S_j$ \text{is a block and~$(S_j \cap V_+) \setminus \mc{C}(\bar{G}) \neq \emptyset$}}
                \State {Append~$(S_j \cap V_+) \setminus  \mc{C}(\bar{G})$ to the end of~$H$.}
            \ElsIf {$S_j$ \text{is a cut-vertex~$v$}}
                \State {Append~$\{v\}$ to the end of~$H$.}
            \EndIf
        \EndFor
        \State {$\mc{H} \gets \mc{H} \cup \{H\}$}
    \EndFor \label{alg:loop_end}
    \State {\textbf{return}~$\mc{H}$}
\EndProcedure
\end{algorithmic}
\caption{\textsc{GetPartialRoutes}}
\label{algorithm:partial_route}
\end{algorithm}

Using the notion of block-cut forests, we present our heuristic for separating partial route inequalities in Algorithm~\ref{algorithm:partial_route}. In line~\ref{alg:block_cut_forest}, we build the graph~$\bar{G}$ and apply the algorithm of~\cite{hopcroft1973algorithm} to construct the block-cut forest~$\mc{T}$. Then, in lines~\ref{alg:loop_start}-\ref{alg:loop_end}, we generate a partial route~$H$ for each simple path~$P \in \mc{P}$ connecting distinct leaves of~$\mc{T}$ (see Figure~\ref{figure:block-cut-tree}). To simplify the presentation, we represent the paths by (one of) its corresponding sequence of vertices in~$\mc{T}$. 

In our implementation, we do not store the entire set of paths~$\mc{P}$. Instead, we iteratively construct the paths by running depth-first searches from each leaf of~$\mc{T}$. Additionally, to avoid examining symmetric partial routes, we assume that we have a total order~$\prec$ on all the vertices of the block-cut forest~$\mc{T}$.

We remark that, although we described Algorithm~\ref{algorithm:partial_route} as a separation heuristic, the algorithm itself does not find violated inequalities. Instead, it generates a candidate set of partial routes~$\mc{H}$, which can later be tested for violation depending on the form of the partial route inequalities and the choice of the recourse lower bounds. For example, in Section~\ref{section:application_vrpsd} we define a partial route lower bound~$\Lc{H}$ for the VRPSD with scenarios under the classical recourse policy. Then, given a partial route~$\H \in \mc{H}$, if we use the customer-based disaggregation from Remark~\ref{remark:disaggregation}, we check if~$\bar{\theta}(V_+(H)) > \Lc{H} \cdot \Whs(\bar{x}; \Xh{H})$. If instead we use the route-based disaggregation in Formulation~\eqref{formulation:partial_route}, we check if~$\bar{\theta}_v > \Lc{H} \cdot \Whs(\bar{x}; \Xh{H})$, where~$v \in V_+(\H)$ is the customer with the smallest index in~$\H$. More details on the separation routine are given in Section~\ref{subsection:vrpsd_separation} and Appendix~\ref{appendix:separation}.

To examine the time complexity of Algorithm~\ref{algorithm:partial_route}, we first note that line~\ref{alg:block_cut_forest} of the algorithm can be executed in~$\mc{O}(|E|)$ using the algorithm of~\cite{hopcroft1973algorithm}. Since~$G$ is complete, constructing the set~$\mc{P}$ also takes~$\mc{O}(|E|)$: for each leaf of~$\mc{T}$, we run a depth-first search in a tree~$T \in \mc{T}$ with~$\mc{O}(|V|)$ nodes. (Given a graph~$G'$ with~$|V(G')| \geq 2$, one can show that~$|\mc{B}(G')| \leq |V(G')| - 1$, so the number of nodes in the block-cut tree of~$G'$ is at most~$2 |V(G')| - 1$, see Proposition 1 of~\cite{li2021number}.) Therefore, since~$|\mc{P}| \leq |E|$ and each iteration of the loop in lines~\ref{alg:loop_start}-\ref{alg:loop_end} runs in~$\mc{O}(|V|)$, the overall time complexity of Algorithm~\ref{algorithm:partial_route} is~$\mc{O}(|V||E|)$. 

In contrast, the separation heuristic proposed by Hoogendoorn and Spliet~\cite{hoogendoorn2023improved} has time complexity of~$\mc{O}(|E|)$. 
Their algorithm is equivalent to Algorithm~\ref{algorithm:partial_route} but only considers the components of~$G(\bar{x}) \setminus \{0\}$ whose total flow to the depot is exactly 2. In fact, they construct~$\bar{V}$ by only considering the customers~$v$ with~$\bar{x}_{0v} = 1$, rather than~$1 \leq \bar{x}_{0v} < 2$, but this does not invalidate our next argument. 

More precisely, they replace the block-cut forest~$\mc{T}$ in Algorithm~\ref{algorithm:partial_route} with a forest~$\mc{T}' \subseteq \mc{T}$ that contains all block-cut trees~$T \in \mc{T}$ such that~$\bar{x}(E(0, V_+(T))) = 2$. In what follows, we demonstrate that~$\mc{T}'$ is made up of a disjoint set of paths, which indeed reduces the time complexity of Algorithm~\ref{algorithm:partial_route} to~$\mc{O}(|E|)$ (since each vertex in~$\mc{T}'$ is examined just once).

\begin{proposition}
\label{proposition:hoogendoorn_path}
Let~$\bar{x} \in \X$ and let~$\mc{T}$ be the block-cut forest associated with~$G(\bar{x})$. Then every tree~$T \in \mc{T}$ with~$\bar{x}(E(0, V_+(T))) = 2$ is a path.
\end{proposition}
\begin{proof}
Take a tree~$T \in \mc{T}$ with~$\bar{x}(E(0, V_+(T))) = 2$. If~$T$ has a single vertex, we are done. Otherwise, let~$L$ be the set of leaves of~$T$. Since~$T$ has at least two vertices, we know that~$|L| \geq 2$. Our goal is to show that~$|L| = 2$.

Note that each leaf~$S \in L$ is either of the form~$S = \{v, \texttt{dummy}(v)\}$ or satisfies~$S \subseteq V_+$. Let~$L_1$ be the set of leaves containing dummy vertices, and let~$L_2 = L \setminus L_1$ be the remaining leaves. We construct a mapping from leaves to customer sets as follows. For each~$S \in L$, define
\begin{equation*}
    \phi(S) \coloneqq
    \begin{cases}
        \{v\}, & \text{if~$S = \{v, \texttt{dummy}(v)\} \in L_1$},\\
        S \setminus \mc{C}(T), & \text{if~$S \in L_2$.}
    \end{cases}
\end{equation*}

We claim that~$\phi(S) \cap \phi(S') = \emptyset$ for every~$S, S' \in L$ with~$S \neq S'$. First observe that if~$S = \{v, \texttt{dummy}(v)\}$ and~$S' = \{u, \texttt{dummy}(u)\}$, then~$u \neq v$. Hence, without loss of generality, assume~$S \in L_2$. By the construction of block-cut trees, the blocks can only intersect only at the cut vertices. Since~$\phi(S)$ contains no cut vertex, the intersection~$\phi(S) \cap \phi(S')$ is empty. This holds whether~$S' \in L_1$ (in which case~$\phi(S')$ is a singleton containing a cut vertex) or~$S' \in L_2$ (in which case~$(S \setminus \mc{C}(T)) \cap (S' \setminus \mc{C}(T)) = \emptyset$).

Therefore, the sets in the image of~$\phi$ do not intersect. Using~$\bar{x}(E(0, V_+(T))) = 2$ we then learn that~$\sum_{S \in L} \bar{x}(E(0, \phi(S))) \leq 2$. To prove that~$|L| = 2$, it suffices to show that every leaf~$S \in L$ satisfies~$\bar{x}(E(0, \phi(S))) \geq 1$. 

If~$S = \{v, \texttt{dummy}(v)\} \in L_1$, we know that~$\bar{x}(E(0, \phi(S))) = \bar{x}_{0v} \geq 1$, so we assume that~$S$ belongs to~$L_2$. Since~$S$ is a leaf and a block, it has a unique neighbor which is a cut vertex~$v \in \mc{C}(T)$. Hence,~$\phi(S) = S \setminus \mc{C}(T) = S \setminus \{v\}$ is nonempty, as otherwise~$v$ would not be a cut vertex. 

Recall that the subtour elimination constraint (SEC) for a set~$S' \subseteq V_+$ can be expressed as~$x(\delta(S')) \geq 2$. Call~$S' = \phi(S)$ and apply the SEC for~$S'$ to learn that
\begin{align}
    & \bar{x}(\delta(S') \geq 2 \nonumber \\
    \iff & \bar{x}(E(0, S') + \bar{x}(E(v, S')) \geq 2. \label{ineq:proof_block_cut_tree1}
\end{align}
Similarly, by the SEC for the set~$S' \cup \{v\}$,
\begin{align}
    & \bar{x}(\delta(S' \cup \{v\})) \geq 2 \nonumber \\
    \iff & \bar{x}(E(0, S')) + \bar{x}(E(v, V \setminus (S' \cup \{v\}))) \geq 2 \label{ineq:proof_block_cut_tree2}
\end{align}
(note that~$V$ includes the depot). Summing~\eqref{ineq:proof_block_cut_tree1} and~\eqref{ineq:proof_block_cut_tree2} gives
$$2 \, \bar{x}(E(0, S')) \geq 4 - \bar{x}(\delta(v)) = 2,$$
as desired.
\end{proof}

Proposition~\ref{proposition:hoogendoorn_path} also provides a formal justification for the description given by~\cite{hoogendoorn2023improved} of their separation algorithm, where the authors state that, for each considered component, ``In the appropriate order, a singleton corresponding to each articulation point of the connected component is added to the partial route, as well as the sets of vertices in between two articulation points that are added as an unstructured component''. Since block-cut trees may contain branching vertices (see Figure~\ref{figure:block-cut-tree}), this description may appear somewhat imprecise, but Proposition~\ref{proposition:hoogendoorn_path} shows that, up to symmetry, the ``appropriate order'' is well-defined.

We conclude this appendix by mentioning that, although Algorithm~\ref{algorithm:partial_route} can (in principle) find more violated partial route inequalities than the separation routine proposed by~\cite{hoogendoorn2023improved}, our preliminary experiments indicate no advantage in considering trees~$T \in \mc{F}$ with~$\bar{x}(E(0, V_+(T))) > 2$. Therefore, in practice, we only consider trees with~$\bar{x}(E(0, V_+(T))) = 2$, which, as argued earlier, is equivalent to the algorithm of~\cite{hoogendoorn2023improved}.

\section{Proof of Claim~\ref{claim:monotone}}
\label{appendix:proof_claim_monotone}

\claimmonotone*
\begin{proof}
Take an arbitrary route~$R = (v_1, \ldots, v_\ell) \in \routes$. By superadditivity, for every~$i \in [\ell]$,~$\hat{\mc{Q}}(R, v_i)$ is nonnegative. Moreover,~$\sum_{i = 1}^\ell \hat{\mc{Q}}(R, v_i)$ is a telescoping sum that evaluates to~$\mc{Q}(R)$. Therefore,~$\hat{\mc{Q}}$ is indeed a disaggregation of~$\mc{Q}$ (i.e., it satisfies Definition~\ref{def:recourse_disaggregation}). 
    
Now let~$R' = (v_a, \ldots, v_b) \subseteq R$. We need to show that~$\sum_{i = a}^b \hat{\mc{Q}}(R, v_i) \geq \mc{Q}(R')$. If~$a = 1$, we apply again the telescoping argument to learn that~$\sum_{i = a}^b \hat{\mc{Q}}(R, v_i) = \mc{Q}((v_1, \ldots, v_b)) = \mc{Q}(R')$. In a similar way, if~$a \geq 2$, then~$\sum_{i = a}^b \hat{\mc{Q}}(R, v_i) = \mc{Q}((v_1, \ldots, v_b)) - \mc{Q}((v_1, \ldots, v_{a - 1})) \geq \mc{Q}(R')$, where the last inequality follows by superadditivity, since~$\mc{Q}((v_1, \ldots, v_b)) \geq \mc{Q}((v_1, \ldots, v_{a - 1})) + \mc{Q}(R')$.     
\end{proof}

\section{Proof of Claim~\ref{claim:restrictively_superadditive}}
\label{appendix:proof_claim_restrictively_superadditve}

We first show a simple fact.
\begin{fact}
    \label{fact:superraditive_monotone}
    Suppose that~$\mc{Q}$ is superadditive. For every~$R \in \routes$ and~$R'' \subseteq R' \subseteq R$, we have~$\mc{Q}(R') \geq \mc{Q}(R'')$.
\end{fact}
\begin{proof}
    Let~$R'' \subseteq R' \subseteq R \in \routes$ be as in the statement. There exist (possibly empty) routes~$R_1$ and~$R_2$ such that~$R' = R_1 \oplus R'' \oplus R_2$. By superadditivity and nonnegativity of~$\mc{Q}$, we have that~$\mc{Q}(R') \geq \mc{Q}(R_1 \oplus R'') + \mc{Q}(R_2) \geq \mc{Q}(R_1) + \mc{Q}(R'') \geq \mc{Q}(R'')$.
\end{proof}

\claimrestrictivelysuperadditive*
\begin{proof}
Take an arbitrary~$R \in \routes$ and let~$R_1, \ldots, R_t$ be disjoint subroutes of~$R$. There exists disjoint subroutes~$\tilde{R}_1, \ldots, \tilde{R}_t \subseteq R$ such that~$R = \bigoplus_{i \in [t]} \tilde{R}_i$ and, for every~$i \in [t]$,~$\tilde{R}_i$ covers~$R_i$, i.e.,~$R_i \subseteq \tilde{R}_i$. By Fact~\ref{fact:superraditive_monotone}, it follows that~$\mc{Q}(\tilde{R}_i) \geq \mc{Q}(R_i)$, for every~$i \in [t]$. Hence, repeatedly applying superadditivity yields~$\mc{Q}(R) = \mc{Q}\left(\bigoplus_{i \in [t]} \tilde{R}_i \right) \geq \sum_{i \in [t]} \mc{Q}(\tilde{R}_i) \geq \sum_{i \in [t]} \mc{Q}(R_i)$.
\end{proof}

\section{Proof of Theorem~\ref{thm:supperadditive}}
\label{appendix:proof_thm_superadditive}

\thmsuperadditive*
\begin{proof}
For~\ref{item:superadditive_a}, let~$R \in \routes$ and~$R_1, \ldots, R_t$ be disjoint subroutes in~$R$. Then
$$\mc{Q}(R) = \sum_{v \in V_+(R)} \hat{\mc{Q}}(R, v) \geq \sum_{i = 1}^t \sum_{v \in V_+(R_i)} \hat{\mc{Q}}(R, v) \geq \sum_{i = 1}^t \mc{Q}(R_i),$$
where the last inequality follows from the monotonicity of~$\hat{\mc{Q}}$ (Definition~\ref{def:monotonicity}).

Proving item~\ref{item:superadditive_b} requires more work, as we need to build a monotone disaggregation~$\hat{\mc{Q}}$ using restricted superadditivity alone. Let~$R = (v_1, \ldots, v_\ell)$ be a route. If~$R$ is not realizable (i.e.,~$R \notin \routes$), we can simply set~$\hat{\mc{Q}}(R, v) = \mc{Q}(R)$, for an arbitrarily chosen vertex~$v \in V_+(R)$. (Note that the conditions for restricted superadditivity only apply to realizable routes.) So we may safely assume that~$R$ is realizable. 

In the remainder of the proof, we show that the following dynamic-programming-based Algorithm~\ref{algorithm:get_disaggregation} finds values~$\hat{\mc{Q}}(R, v_1), \ldots, \hat{\mc{Q}}(R, v_\ell)$ such that~$\mc{Q}(R) = \sum_{i = 1}^\ell \hat{\mc{Q}}(R, v_i)$ and~$\sum_{v \in V_+(R')} \hat{\mc{Q}}(R, v) \geq \mc{Q}(R')$, for every~$R' \subseteq R$.

\begin{algorithm}{htb}
  \hspace*{\algorithmicindent} \textbf{Input:} A route~$R = (v_1, \ldots, v_\ell)$.\\
  \hspace*{\algorithmicindent} \textbf{Output:} The disaggregated recourse values~$\hat{\mc{Q}}(R, v_1), \ldots, \hat{\mc{Q}}(R, v_\ell)$.
\begin{algorithmic}[1]
    \Procedure {\textsc{GetDisaggregation}}{$R = (v_1, \ldots, v_\ell)$}
    \State{$\mc{A}_0 \gets \emptyset$}
    \For{$b = 1, \ldots, \ell$ \label{line:superadditive_loop_begin}}
        \State{$\hat{\mc{Q}}(R, v_b) \gets 0$}
        \State{$a \gets \argmax_{a' \in [b]} \{\mc{Q}((v_{a'}, \ldots, v_b)) - \sum_{i = a'}^b \hat{\mc{Q}}(R, v_i)\}$ \label{line:superadditive_a}}
        \State{$\Delta_b \gets \mc{Q}((v_{a}, \ldots, v_b)) - \sum_{i = a}^b \hat{\mc{Q}}(R, v_i)$ \label{line:superadditive_delta}}
        \If{$\Delta_b > 0$}
            \State{$\mc{A}_b \gets \mc{A}_{a - 1} \cup \{(v_a, \ldots, v_b)\}$ \label{line:set_A}}
            \State{$\hat{\mc{Q}}(R, v_b) \gets \Delta_b$}
        \Else
            \State{$\mc{A}_b \gets \mc{A}_{b - 1}$}
        \EndIf
    \EndFor \label{line:superadditive_loop_end}
    \State{$\Delta_R \gets \mc{Q}(R) - \sum_{i = 1}^\ell \hat{\mc{Q}}(R, v_i)$ \label{line:superadditive_delta_r}}
    \If{$\Delta_R > 0$}
        \State{$\hat{\mc{Q}}(R, v_1) \gets \hat{\mc{Q}}(R, v_1) + \Delta_R$}
    \EndIf
    \State {\textbf{return}~$\hat{\mc{Q}}(R, v_1), \ldots, \hat{\mc{Q}}(R, v_\ell)$}
\EndProcedure
\end{algorithmic}
\caption{\textsc{GetDisaggregation}}
\label{algorithm:get_disaggregation}
\end{algorithm}

Let~$b \in [\ell]$ and consider the corresponding iteration of the loop in lines~\ref{line:superadditive_loop_begin}--\ref{line:superadditive_loop_end}. For now, we focus on the variables~$a$ in line~\ref{line:superadditive_a} and~$\Delta_b$ in line~\ref{line:superadditive_delta}, the purpose of the collections of routes~$\mc{A}_b$ will be explained later. The way that Algorithm~\ref{algorithm:get_disaggregation} sets variable~$\hat{\mc{Q}}(R, v_b)$ guarantees that the returned disaggregated recourse values satisfy~$\sum_{v \in V_+(R')} \hat{\mc{Q}}(R, v) \geq \mc{Q}(R')$, for every subroute~$R' \subseteq R$ that ends at~$v_b$. To see this, note that after assigning~$\Delta_b$ in line~\ref{line:superadditive_delta}, it follows from the choice of~$a$ that, every subroute~$R_{a'} = (v_{a'}, \ldots, v_b)$ with~$a' \in [b]$ satisfies~$\Delta_b \geq \mc{Q}(R_{a'}) - \sum_{v \in V_+(R_{a'}) \setminus \{b\}} \hat{\mc{Q}}(R, v)$. Since by the end of this iteration we set~$\hat{\mc{Q}}(R, v_b) = (\Delta_b)^+$, it follows that~$\sum_{v \in V_+(R_{a'})} \hat{\mc{Q}}(R, v) \geq \mc{Q}(R_{a'})$, as desired. Repeating this argument for every iteration of~$b \in [\ell]$ we learn that, by the end of Algorithm~\ref{algorithm:get_disaggregation}, the returned values indeed satisfy~$\sum_{v \in V_+(R')} \hat{\mc{Q}}(R, v) \geq \mc{Q}(R')$, for every subroute~$R' \subseteq R$.

It remains to show that~$\mc{Q}(R) = \sum_{v \in V_+(R)} \hat{\mc{Q}}(R, v)$, or equivalently, that the value of~$\Delta_R$ in line~\ref{line:superadditive_delta_r} is always nonnegative. To prove this, we use induction to show that, for every~$b \in \{0, \ldots, \ell\}$, (i)~$\mc{A}_b$ is made of disjoint subroutes of~$(v_1, \ldots, v_b)$ (if~$b = 0$, then~$\mc{A}_b = \emptyset$), and (ii)~$\sum_{i \in [b]} \hat{\mc{Q}}(R, v_i) = \sum_{R' \in \mc{A}_b} \mc{Q}(R')$. Hence, applying restricted superadditivity we conclude that~$\mc{Q}(R) \geq \sum_{R' \in \mc{A}_\ell} \mc{Q}(R') = \sum_{i = 1}^\ell \hat{\mc{Q}}(R, v_i)$.

If~$b = 0$, we have that~$\mc{A}_0 = \emptyset$ and~$\sum_{i \in [0]} \hat{\mc{Q}}(R, v_i) = \sum_{R' \in \mc{A}_0} \mc{Q}(R') = 0$, so assume that~$b \geq 1$. If~$\Delta_b \leq 0$, then~$\mc{A}_b = \mc{A}_{b - 1}$, so by the induction hypothesis,~$\mc{A}_b = \mc{A}_{b - 1}$ is made of disjoint subroutes of~$(v_1, \ldots, v_b)$. Moreover, by the end of the loop in lines~\ref{line:superadditive_loop_begin}--\ref{line:superadditive_loop_end} we have that~$\hat{\mc{Q}}(R, v_b) = 0$, which gives
~$$\sum_{i \in [b]} \hat{\mc{Q}}(R, v_i) = \sum_{i \in [b - 1]} \hat{\mc{Q}}(R, v_i) = \sum_{R' \in \mc{A}_{b - 1}} \mc{Q}(R') = \sum_{R' \in \mc{A}_{b}} \mc{Q}(R').$$
If~$\Delta_b > 0$, then~$\mc{A}_b = \mc{A}_{a - 1} \cup \{(v_a, \ldots, v_b)\}$, where~$a \in [b]$. By the induction hypothesis,~$\mc{A}_{a - 1}$ is either empty or is made of disjoint subroutes of~$(v_1, \ldots, v_{a - 1})$, meaning that the routes in~$\mc{A}_b$ are disjoint and contained in~$(v_1, \ldots, v_b)$. By the end of the loop in lines~\ref{line:superadditive_loop_begin}--\ref{line:superadditive_loop_end}, we have that~$\hat{\mc{Q}}(R, v_b) = \Delta_b > 0$. So, by the choice of~$a$ in line~\ref{line:superadditive_a},
\begin{equation}
    \label{eq:proof_superadditive_1}
    \sum_{i = a}^b \hat{\mc{Q}}(R, v_i) = \mc{Q}((v_a, \ldots, v_b)).
\end{equation}
Furthermore, since~$a \geq 1$, we know by the induction hypothesis that
\begin{equation}
    \label{eq:proof_superadditive_2}
    \sum_{i \in [a - 1]} \hat{\mc{Q}}(R, v_i) = \sum_{R' \in \mc{A}_{a - 1}} \mc{Q}(R').
\end{equation}
Summing Equations~\eqref{eq:proof_superadditive_1} and~\eqref{eq:proof_superadditive_2} yields~$\sum_{i = 1}^b \hat{\mc{Q}}(R, v_i) = \sum_{R' \in \mc{A}_b} \mc{Q}(R')$, as desired.
\end{proof}

\newpage
\section{Separation algorithm for the VRPSD with scenarios under the classical recourse policy}
\label{appendix:separation}

~
\begin{algorithm}
  \hspace*{\algorithmicindent} \textbf{Input:} A candidate solution~$(\bar{x}, \bar{\theta}) \in \R^E \times \Q^{V_+}_+$ and an integer~$D \in \{1, 2\}$, which indicates if we use the disaggregation from~\cite{hoogendoorn2023improved} ($D = 1$) or the disaggregation in Remark~\ref{remark:disaggregation} ($D = 2$).
\begin{algorithmic}[1]
    \Procedure {\textsc{SeparateVRPSD}}{$\bar{x}, \bar{\theta}, D$}
    \State {Call CVRPSEP to get a family of customer sets~$\mc{S} \subseteq 2^{V_+}$.}
    \For {$S \in \mc{S}$}
        \State {Add RCI~$x(E(S)) \leq |S| - \bark{S}$.}
        \If{$D = 2$ and~$\bar{\theta}(S) < \setLc{S, \bar{k}(S)} \cdot (\bar{x}(E(S)) - |S| + \bark{S})$}
            \State {Add set cut~$\theta(S) \geq \setLc{S, \bar{k}(S)} \cdot (x(E(S)) - |S| + \bark{S})$.}
        \EndIf
    \EndFor
    \If {$\mc{S} \neq \emptyset$}
        \State{\textbf{return}}
    \EndIf
    \State {$\mc{H} \gets \tsc{GetPartialRoutes}(\bar{x}, \bar{\theta})$}
    \For {$H \in \mc{H}$}
        \If {$D = 1$}
            \State {$v \gets \text{customer in~$V_+(H)$ with the smallest index.}$}
            \If{$\bar{\theta}_v < \Lc{H} \cdot \Whs(x; \Xh{H})$}
                \State {Add partial route cut~$\theta_v \geq \Lc{H} \cdot \Whs(x; \Xh{H})$.}
            \EndIf
        \Else
            \State {$S \gets V_+(H)$}
            \If{$\bar{\theta}(S) < 
            \setLc{S, \bar{k}(S)} \cdot (\bar{x}(E(S)) - |S| + \bark{S})$}
                \State {Add set cut~$\theta(S) \geq \setLc{S, \bar{k}(S)} \cdot (x(E(S)) - |S| + \bark{S})$.}
            \Else
                \If{$\bar{\theta}(S) < \Lc{H} \cdot \Whs(x; \Xh{H})$}
                    \State {Add partial route cut~$\theta(S) \geq \Lc{H} \cdot \Whs(x; \Xh{H})$.}
                \EndIf
            \EndIf
        \EndIf
    \EndFor
\EndProcedure
\end{algorithmic}
\caption{\textsc{SeparateVRPSD}}
\label{algorithm:vrpsd_separation}
\end{algorithm}

\end{APPENDICES}

\end{document}